\definecolor{darkblue}{rgb}{0.0,0.0,0.7}
\numberwithin{equation}{section}
\newcommand{\norm}[1]{\|#1\|}%
\newcommand\eps{\varepsilon}
\DeclareMathOperator*{\argmin}{argmin}
\def\ds1{\textrm{1\kern-0.25emI}} 
\newcommand\E{{\mathbb E}}
\newcommand\R{{\mathbb R}}
\newcommand{\Pro}{\mathbb{P}}
\newcommand \cC{{\cal C}}
\newcommand \cN{{\cal N}}
\newcommand{\vf}{{\mathbf{f}}}
\newcommand{\vbeta}{{\boldsymbol{\beta}}}
\newcommand{\vlambda}{{\boldsymbol{\lambda}}}
\newcommand{\vv}{{\boldsymbol{v}}}
\newcommand{\ve}{{\boldsymbol{e}}}
\newcommand{\vzero}{{\boldsymbol{0}}}
\newcommand{\vx}{{\boldsymbol{x}}}
\newcommand{\vz}{{\boldsymbol{z}}}
\newcommand{\vu}{{\boldsymbol{u}}}
\newcommand{\vw}{{\boldsymbol{w}}}
\newcommand{\vy}{\mathbf{y}}
\newcommand{\vxi}{{\boldsymbol{\xi}}}
\newcommand{\vg}{{\boldsymbol{g}}}
\newcommand{\vdelta}{{\boldsymbol{\delta}}}
\newcommand{\vtau}{\boldsymbol{\tau}}
\newcommand{\vomega}{\boldsymbol{\omega}}
\newcommand{\design}{\mathbb{X}}
\newcommand{\hbeta}{{\boldsymbol{\hat\beta}}}
\newcommand{\norma}[1]{|#1|}
\declaretheorem[name=Theorem,numberwithin=section]{thm}
\declaretheorem[name=Theorem,sibling=thm]{theorem}
\declaretheorem[name=Proposition,sibling=thm]{proposition}
\declaretheorem[name=Lemma,sibling=thm]{lemma}
\declaretheorem[name=Corollary,sibling=thm]{corollary}
\declaretheorem[name=Remark,style=definition]{remark}
\newtheoremstyle{named}{}{}{\itshape}{}{\bfseries}{.}{.5em}{\thmnote{#3} #1}
\theoremstyle{named}
\newtheorem{condition}{condition}
\DeclareMathOperator{\Med}{Med}
\begin{document}

\begin{frontmatter}

\title{Slope meets Lasso: improved oracle bounds and optimality}
\runtitle{Slope meets Lasso}

\begin{aug}
\author{\fnms{Pierre C.} \snm{Bellec}${ }^{*,**,\dagger}$\ead[label=e1]{pierre.bellec@ensae.fr}},
\author{\fnms{Guillaume} \snm{Lecu{\'e}}${ }^{*,\dagger,\ddagger}$\ead[label=e2]{guillaume.lecue@ensae.fr}}
\and
\author{\fnms{Alexandre B.} \snm{Tsybakov}${ }^{*,\dagger}$
\ead[label=e3]{alexandre.tsybakov@ensae.fr}}

\runauthor{Bellec, Lecu\'e and Tsybakov}

\address{ENSAE$^*$, CREST (UMR CNRS 9194)$^\dagger$, CNRS$^{\ddagger}$ and Rutgers University$^{**}$}

\address{
\printead*{e1}\\
\printead*{e2}\\
\printead*{e3}}

\end{aug}

\runauthor{Bellec, Lecu\'e and Tsybakov}

\begin{abstract} \
We show that two polynomial time methods, a Lasso estimator with adaptively
    chosen tuning parameter and a Slope estimator, adaptively achieve the  minimax
    prediction and $\ell_2$ estimation rate $(s/n)\log (p/s)$ in
    high-dimensional linear regression on the class of $s$-sparse 
    vectors in $\R^p$. This is done under the Restricted Eigenvalue (RE) condition for the Lasso and under
    a slightly more constraining assumption on the design for the Slope.    The main results have the form of sharp oracle inequalities accounting for
    the model misspecification error.
    The minimax optimal bounds are also obtained for the $\ell_q$ estimation errors with $1\le q\le 2$
    when the model is well-specified.
    The results are non-asymptotic, and hold both in probability and in expectation.
    The assumptions that we impose on the design  are satisfied with high probability for a large class of random
    matrices with independent and possibly anisotropically distributed rows.
    We give a comparative analysis of conditions, under which oracle
    bounds for the Lasso and Slope estimators can be obtained.
    In particular,
    we show that several known conditions, such as the RE condition and the
    sparse eigenvalue condition are equivalent if the $\ell_2$-norms of regressors are
    uniformly bounded.
\end{abstract}

\begin{keyword}[class=MSC]
\kwd[Primary ]{60K35}
\kwd{62G08}
\kwd[; secondary ]{62C20}
\kwd{62G05}
\kwd{62G20}
\end{keyword}

\begin{keyword}
\kwd{Sparse linear regression}
\kwd{Minimax rates}
\kwd{High-dimensional statistics}
\end{keyword}

\end{frontmatter}

\section{Introduction}

One of the important issues in high-dimensional statistics is to construct 
methods that are both computable in polynomial time, and  have optimal
statistical performance in the sense that they attain the optimal convergence
rates on suitable classes of underlying objects (vectors, matrices), such as,
for example, the classes of $s$-sparse vectors.
It has been recently shown that, in some testing problems, this task cannot be
achieved, and there is a gap between the optimal rates in a minimax sense and
the best rate achievable by polynomial time algorithms \cite{MR3127849}.
However, the question about the existence of such a gap remains open for the
most famous problem, namely, that of estimation and prediction in high-dimensional linear regression on
the classes of $s$-sparse parameters in $\R^p$.
The known polynomial time methods such as the Lasso, the Dantzig selector and
several other were shown to attain the prediction or $\ell_2$-estimation rate
$(s/n)\log(p)$ \cite{MR2300700,MR2533469} while the minimax rate for the
problem is $(s/n)\log(p/s)$ (cf. \cite{rigollet2011exponential,lptv2011,ye_zhang2010,raskutti2011,abramovich2010,verzelen2012,candes_davenport2013}
and Section~\ref{sec:lower} below).
The recent papers \cite{MR3485953,LM_reg_sparse} inspire hope that computationally
feasible methods can achieve the minimax rate $(s/n)\log (p/s)$.
Specifically, \cite{MR3485953} shows that for a particular random design (i.i.d.
standard normal regressors) the rate $(s/n)\log (p/s)$ is asymptotically
achieved by a Slope estimator, which is computable in polynomial time. 
An extension of \cite{MR3485953} to subgaussian designs is given in  \cite{LM_reg_sparse} that provides a non-asymptotic bound with the same rate. However, akin to \cite{MR3485953}, a key assumption in  \cite{LM_reg_sparse} is that the design is isotropic, so that its covariance matrix is proportional to the identity matrix.

The Slope estimator suggested in \cite{MR3418717} is defined as a solution of the convex
minimization problem given in \eqref{eq:def-hbeta} below.
This estimator requires $p$ tuning parameters $\lambda_1,\dots,\lambda_p$
not all equal to 0 and such that $\lambda_1\ge\dots\ge \lambda_p\ge0$.
These $\lambda_1,\dots,\lambda_p$ are the weights of the sorted $\ell_1$ norm, cf.
\eqref{eq:def-norm_*} below.

In this paper, we show that under a Restricted Eigenvalue ($RE$) type condition on the design,
the Slope estimator with suitably chosen tuning parameters achieves the optimal rate $(s/n)\log(p/s)$ for both
the prediction and the $\ell_2$ estimation risks, and both in probability and in expectation. The recommended tuning parameters are given in \eqref{eq:recommendation-weights-slope} below.
Furthermore, we show that a large class of random design matrices with independent and possibly anisotropically distributed rows
satisfies this $RE$ type condition with high probability. In other words, our conditions on the design for the Slope estimator are very close to those usually assumed for the Lasso estimator while the rate  improves from $(s/n)\log(p)$ (previously known for the Lasso) to the optimal rate $(s/n)\log(p/s)$.
Next, with the same method of proof, we show that the Lasso estimator also achieves this improved (and optimal) rate when the sparsity $s$ is known.  If $s$ is unknown, we propose to replace $s$ by an estimator $\hat s$ such that the
        bound $\hat s \le s$ holds with high probability.
        We show that the suggested $\hat s$ is such that the Lasso estimator
        with tuning parameter of order $\sqrt{\log(p/\hat s) /n }$
        achieves the optimal rate $(s/n)\log(p/s)$.

The main results are obtained in the form of sharp oracle inequalities accounting for the
model misspecification error.
The minimax optimal bounds are also established for the $\ell_q$-estimation errors with $1\le q\le 2$ when the model is well-specified.
All our results are non-asymptotic.

As a by-product, we cover some other related issues of independent interest:
\begin{itemize}
    \item
We give a comparative analysis of conditions, under which oracle
        bounds for the Lasso and Slope estimators can be obtained showing, in
        particular, that several known conditions are equivalent.
        \item Due to the new techniques, we obtain bounds in probability with fast rate $(s/n)\log(p/s)$ at any level of confidence while using the same tuning parameter. As opposed to the previous work on the Lasso, the level of confidence is not linked to the tuning parameter of the method. As a corollary, this implies rate optimal bounds on any moments of the estimation and prediction errors.
\end{itemize}

\section{Statement of the problem and organization of the paper}
\label{sec:statement2}

Assume that we observe the vector
\begin{equation*}
    \vy = \vf + \vxi,
\end{equation*}
where $\vf\in\R^n$ is an unknown deterministic mean
and $\vxi\in\R^n$ is a noise vector. Let $\sigma>0$.
Everywhere except for Section \ref{sec:extension_to_a_sub_gaussian_noise} we assume that $\vxi$ is normal $\cN(\vzero,\sigma^2 I_{n\times n})$, where  $I_{n\times n}$ denotes the $n\times n$ identity matrix.

For all $\vu = (u_1,\dots,u_n)\in\R^n$, define the empirical norm of $\vu$
by
\begin{equation*}
    \norm{\vu}_n^2 = \frac 1 n \sum_{i=1}^n u_i^2.
\end{equation*}

Let $\design\in\R^{n\times p}$ be a given matrix that we will call the design matrix. If $\hbeta=\hbeta(\vy)$ is an estimator valued in $\R^p$,
the value $\design\hbeta$ is used as a prediction for $\vf$.
The prediction error of an estimator $\hbeta$ is given by
$\norm{\design\hbeta - \vf}_n^2$.
If the model is well-specified, that is $\vf=\design\vbeta^*$ for some $\vbeta^*\in\R^p$, then
$\hbeta$ is used as an estimator of $\vbeta^*$.
The estimation error of $\hbeta$ is given by
$\norma{\hbeta - \vbeta^*}_q^q$ for some $q\in[1,2]$, where $\norma{\cdot}_q$ denotes the $\ell_q$-norm in $\R^p$.

Two estimators will be studied in this paper: the Lasso estimator and the Slope estimator.
The Lasso estimator $\hbeta$ is
a solution of the minimization problem
\begin{equation}\label{eq:def-lasso}
    \hbeta \in \argmin_{\vbeta\in\R^p}
    \left(
        \norm{\design\vbeta - \vy}_n^2
        + 2 \lambda\norma{\vbeta}_1
    \right),
\end{equation}
where $\lambda> 0$ is a tuning parameter.
Section \ref{sec:lasso-4} studies the prediction and estimation performance
of the Lasso estimator with tuning parameter
of order $\sigma\sqrt{\log(p/s)/n}$, where $s\in\{1,\dots,p\}$
is a sparsity parameter which is supposed to be known.
In Section \ref{sec:lepski} we propose an adaptive choice of this parameter.
Section \ref{sec:lepski} defines an estimator $\hat s$ valued in $\{1,\dots,p\}$
and studies the performance of the Lasso estimator
with a data-driven tuning parameter of order $\sigma\sqrt{\log(p/\hat s)/n}$.

Section \ref{sec:slope} studies the Slope estimator \cite{MR3418717},
which is defined as follows.
Let $\vlambda = (\lambda_1,\dots,\lambda_p) \in \R^p$
be a vector of tuning parameters not all equal to 0 such that
$\lambda_1\ge \lambda_2\ge\dots\ge \lambda_p\ge0$.
For any $\vbeta=(\beta_1,\dots,\beta_p)\in\R^p$, let $(\beta_1^\sharp,\dots,\beta_p^\sharp)$ be
a non-increasing rearrangement of
$
    |\beta_1|,\dots,|\beta_p|
$.
Set
\begin{equation}
    \norma{\vbeta}_* = \sum_{j=1}^p \lambda_j \beta_j^\sharp,
    \qquad
    \vbeta\in\R^p,
    \label{eq:def-norm_*}
\end{equation}
which defines a norm on $\R^p$, cf.
\cite[Proposition 1.2]{MR3418717}.
Equivalently, we can write
\begin{equation}
    \norma{\vbeta}_*
    =
    \max_{\phi}
    \sum_{j=1}^p \lambda_j |\beta_{\phi(j)}| ,
    \label{eq:maximal-rearrange}
\end{equation}
where the maximum is taken over all permutations $\phi=(\phi(1),\dots,\phi(p))$ of $\{1,\dots,p\}$.
The Slope estimator $\hbeta$ is defined as a solution of
the minimization problem
\begin{equation}\label{eq:def-hbeta}
    \hbeta \in \argmin_{\vbeta\in\R^p}
    \left(
        \norm{\design\vbeta - \vy}_n^2
        + 2 \norma{\vbeta}_*
    \right)
    .
\end{equation}
Section \ref{sec:slope} establishes oracle inequalities
and estimation error bounds for the Slope estimator with tuning parameters
\begin{equation}
    \lambda_j = A \sigma \sqrt{\frac{\log(2p/j)}{n}},
    \qquad
    j=1,\dots,p,
    \label{eq:recommendation-weights-slope}
\end{equation}
{
for any constant $A> 4 + \sqrt 2$.
}

Section \ref{sec:lower} gives non-asymptotic minimax lower bounds
showing that the upper bounds of Sections \ref{sec:lasso-4} -- \ref{sec:slope} cannot be improved.
In Section \ref{sec:assumptions}, we provide a comparison of the conditions on the design matrix $\design$, under which the results are obtained. In particular, we prove that
the oracle inequalities
for the Slope estimator in Section \ref{sec:slope}
hold for design matrices with independent and possibly anisotropically distributed subgaussian rows.  Section \ref{sec:extension_to_a_sub_gaussian_noise} explains that, up to changes in numerical constants, all results
of the paper remain valid if the components of the noise vector $\vxi$
are independent subgaussian random variables.
The proofs are given in the Appendix.

\section*{Notation and preliminaries}

We will assume that the diagonal elements of the Gram matrix $\frac 1 n \design^T\design$ are at most 1,
that is,
$\max_{j=1,\dots,p}\norm{\design\ve_j}_n \le 1$
where $(\ve_1,\dots,\ve_p)$ is the canonical basis in $\R^p$.
Let
$\vg = (g_1,\dots,g_p)$ be the random vector with components
\begin{equation}
    g_j = \frac 1 {\sqrt n} \vx_j^T\vxi
    \qquad
    \text{where}
    \qquad
    \vx_j
    = \design \ve_j, \quad j=1,\dots,p.
    \label{eq:def-g_j}
\end{equation}
If $\vxi\sim \cN(\vzero,\sigma^2 I_{n\times n})$ it follows from the inequality $\norm{\vx_j}_n\le 1$ that the random variables $g_j$ are zero mean Gaussian with variance at most $\sigma^2$.
We denote by $\vg^\sharp = (g^\sharp_1,\dots,g^\sharp_p)$ a non-increasing rearrangement of $(|g_1|,\dots,|g_p|)$.
We also use the notation
\begin{equation*}
    \norma{\vbeta}_0=\sum_{j=1}^p I(\beta_j \ne 0),
    \qquad \norma{\vbeta}_\infty=\max_{j=1, \dots, p} |\beta_j|, \qquad \text{and}
    \qquad \norma{\vbeta}_q=\left(\sum_{j=1}^p|\beta_j|^q\right)^{1/q}
\end{equation*}
for $0<q<\infty$. Here, $I(\cdot)$ is the indicator function.
For any set $J\subset\{1,\dots,p\}$, denote by $J^c$ its complement, by $|J|$ its cardinality, and for any $\vu=(u_1,\dots,u_p)\in\R^p$,
let $\vu_J\in\R^p$ be the vector such that its $j$th component is equal to $u_j$ if
$j\in J$ and equal to 0 otherwise.
For two real numbers $a,b$, we will use the notation
$
    a\vee b = \max(a,b).
$

We denote by $\Med[Z]$ a median  of a real valued random variable $Z$, that  is, any real number such that
$\mathbb P(Z \ge \Med[Z]) \ge 1/2$
and
$\mathbb P(Z \le \Med[Z]) \ge 1/2$.

The following bounds on
the sum $\sum_{j=1}^s\log(2p/j)$ will be useful.
From Stirling's formula, we easily deduce that
$s\log (s/e) \le \log( s! ) \le s \log(s)$
and thus
\begin{equation}\label{eq:algebra_stirling}
    s \log(2p/s) \le
    \sum_{j=1}^s
    \log(2p/j)
    =
    s \log(2p)
    - \log(s!)
    \le
    s \log(2ep/s).
\end{equation}
Finally, for a given $\delta_0\in(0,1)$ and for any $\vu=(u_1,\dots,u_p) \in\R^p$ we set
\begin{equation}
    H(\vu)
    \triangleq
    {(4+\sqrt 2)} \sum_{j=1}^p u_j^\sharp \sigma \sqrt{\frac{ \log(2p/j)}{n}},
    \qquad
    G(\vu)
    \triangleq
    {(4+\sqrt 2)} \sigma \sqrt{\frac{\log(1/\delta_0)}{n}}
    \norm{\design\vu}_n,
    \label{eq:def-H-G}
\end{equation}
where $(u_1^\sharp,\dots,u_p^\sharp)$ is a non-increasing
rearrangement of $(|u_1|,\dots,|u_p|)$.

\section{The tuning parameter of the Lasso need not be tied to a confidence level}

In this section, we denote by $\hbeta$ the Lasso estimator
defined by \eqref{eq:def-lasso} and
provide
improved probability estimate for the performance of the Lasso estimator with tuning parameter of order $\sigma\sqrt{2\log p}$.
First, we state a version of the Restricted Eigenvalue condition that we will refer to in the sequel. Let $s\in\{1,\dots,p\}$, and let
$c_0>0$ be a constant.

\begin{condition}[$RE(s,c_0)$]
    \label{assumption:RE-classic}
    The design matrix $\design$ satisfies $\norm{\design\ve_j}_n \le 1$ for all $j=1,\dots,p$,
    and
    \begin{equation}\label{eq:RE-classic}
        \kappa(s,c_0) \triangleq \inf_{\vdelta\in \cC_{RE}(s,c_0): \vdelta\ne \vzero   }
         \frac{\norm{\design\vdelta}_n}{\norma{\vdelta}_2}
        > 0,
    \end{equation}
    where $\cC_{RE}(s,c_0) = \{
            \vdelta\in\R^p:
            \norma{\vdelta}_1 \leq (1+c_0) \sum_{j=1}^s \delta_j^\sharp
    \}$
    and $\delta_1^\sharp\geq \cdots\geq \delta_p^\sharp$ denotes
    a non-increasing rearrangement of $|\delta_1|, \ldots, |\delta_p|$.
\end{condition}
Though stated in somewhat different form, inequality \eqref{eq:RE-classic} is equivalent to the original $RE$ condition of \cite{MR2533469}. Indeed, let $\vdelta\in\R^p$, and let  $J_{*}=J_{*}(\vdelta){\subseteq} \{1,\dots,p\}$ be the set of indices of the $s$ largest in absolute value components of $\vdelta$.  Then
$\sum_{j=1}^s\delta_j^\sharp = \norma{\vdelta_{J_{*}}}_1$.
Therefore, the condition
$
\norma{\vdelta}_1 \leq (1+c_0) \sum_{j=1}^s \delta_j^\sharp
$
can be written as
$\norma{\vdelta_{J_*^c}}_1 \le c_0 \norma{\vdelta_{J_*}}_1$.  {Thus, an equivalent form of \eqref{eq:RE-classic} is obtained by replacing the cone
$\cC_{RE}(s,c_0)$ with}
\begin{equation}\label{C}
   { \cC_{RE}'}(s,c_0)
    = \cup_{J{\subseteq}\{1,\dots,p\}: |J|\le s} \{
            \vdelta\in\R^p:
            \norma{\vdelta_{J^c}}_1
            \le
            c_0
            \norma{\vdelta_{J}}_1
    \},
\end{equation}
{which is the standard cone of the $RE$ condition as introduced in \cite{MR2533469}}. One minor difference from \cite{MR2533469} is that in \eqref{eq:RE-classic}
we have $|\vdelta|_2$ rather than $|\vdelta_{J_*}|_2$ in the denominator. This only modifies the constant $\kappa(s, c_0)$ by factor $\sqrt{1+c_0}$. Indeed, for any $\vdelta \in\cC_{RE}'(s,c_0)$,
\begin{equation*}
 \norma{\vdelta_{J_*^c}}_2^2 = \sum_{j\in J_*^c}\delta_j^2 \leq \sum_{j\in J_*^c}|\delta_j| \Big(\frac{1}{|J_*|}\sum_{k\in J_*}|\delta_k|\Big)\leq \frac{\norma{\vdelta_{J_*}}_1 \norma{\vdelta_{J_*^c}}_1}{|J_*|}\leq \frac{c_0 \norma{\vdelta_{J_*}}_1^2}{|J_*|}\leq c_0 \norma{\vdelta_{J_*}}_2^2,
 \end{equation*}
 and thus $\norma{\vdelta}_2 = (\norma{\vdelta_{J_*}}_2^2+\norma{\vdelta_{J_*^c}}_2^2)^{1/2}\leq \sqrt{1+c_0}\norma{\vdelta_{J_*}}_2$.
Another difference from \cite{MR2533469} is that we include the assumption $\norm{\design\ve_j}_n \le 1$
in the statement of the $RE$ condition. This is a mild assumption that is omnipresent in the literature on the Lasso.
Often it is stated with equality and is interpreted as a normalization.

Consider the following result based on \cite{koltchinskii2011nuclear,MR2533469,dalalyan2017prediction},
which is representative of the non-asymptotic bounds obtained so far for the prediction performance of the Lasso.

\begin{proposition}[\cite{koltchinskii2011nuclear,MR2533469,dalalyan2017prediction}]
    \label{prop:previous}
    Let $p\ge4$,
    $s\in\{1,...,p\}$, $\eps>0$ and set $c_0=1+1/\epsilon$.
    For any $\delta\in(0,1/2)$ and $\eps\in(0,1)$, the Lasso estimator \eqref{eq:def-lasso} with tuning parameter
    \begin{equation}
        \label{old-tuning}
        \lambda = (1+\eps)\sigma \sqrt{2\log(p/\delta)/n}
    \end{equation}
    satisfies with probability $1-2\delta$ the oracle inequality
    \begin{equation}
        \norm{\design\hbeta - \vf}_n^2
        \le
        \inf_{\vbeta\in\R^p: \norma{\vbeta}_0\le s}
        \norm{\design\vbeta - \vf}_n^2
        + \frac{\sigma^2}{n} \left(\frac{(1+\eps)\sqrt{2s\log(p/\delta)}}{\kappa(s,c_0)} + \sqrt s + \sqrt{2\log(1/\delta)} \right)^2.
        \label{oi-previous}
    \end{equation}
\end{proposition}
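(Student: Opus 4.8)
The plan is to run the classical ``basic inequality'' argument for penalized least squares, but to treat the stochastic term with two different concentration bounds according to whether coordinates lie inside or outside the support of the best $s$‑sparse approximant, and to close through Condition \ref{assumption:RE-classic}. Concretely, pick a deterministic minimizer $\vbeta^*$ of $\vbeta\mapsto\norm{\design\vbeta-\vf}_n^2$ over $\{\vbeta:\norma{\vbeta}_0\le s\}$, and set $J=\supp(\vbeta^*)$ (so $|J|\le s$ and $J$ is deterministic), $\vdelta=\hbeta-\vbeta^*$, $R=\norm{\design\vbeta^*-\vf}_n^2$ (the value of the infimum in \eqref{oi-previous}) and $t=\norm{\design\hbeta-\vf}_n$. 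Using the exact quadratic form of the data‑fit term, I would expand the optimality inequality $\norm{\design\hbeta-\vy}_n^2+2\lambda\norma{\hbeta}_1\le\norm{\design\vbeta^*-\vy}_n^2+2\lambda\norma{\vbeta^*}_1$ with $\vy=\vf+\vxi$, substitute the identity $\tfrac2n(\vf-\design\vbeta^*)^T\design\vdelta=R+\norm{\design\vdelta}_n^2-t^2$, and cancel the two $\norm{\design\vdelta}_n^2$ terms to obtain the sharp relation
\[
t^2\ \le\ R+\tfrac2n\vxi^T\design\vdelta+2\lambda\norma{\vdelta_J}_1-2\lambda\norma{\vdelta_{J^c}}_1 ,
\]
the last two terms coming from $\norma{\vbeta^*}_1-\norma{\hbeta}_1\le\norma{\vdelta_J}_1-\norma{\vdelta_{J^c}}_1$.

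Next, control the noise on a high‑probability event. Let $P_J$ be the orthogonal projector of $\R^n$ onto $\Span(\vx_j:j\in J)$. Writing $\tfrac2n\vxi^T\design\vdelta=\tfrac2n(P_J\vxi)^T\design\vdelta+\tfrac2n((I-P_J)\vxi)^T\design\vdelta_{J^c}$ (the off‑support orthogonal component annihilates $\design\vdelta_J$), bound the first piece by $\tfrac2{\sqrt n}\norm{P_J\vxi}_2\norm{\design\vdelta}_n$ and the second by $\bigl(\tfrac2{\sqrt n}\max_{j\le p}|((I-P_J)\vxi)^T\vx_j|/\sqrt n\bigr)\norma{\vdelta_{J^c}}_1$. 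Each $((I-P_J)\vxi)^T\vx_j/\sqrt n$ is Gaussian with variance $\le\sigma^2$ (since $\norm{\vx_j}_n\le1$), so by the refined Gaussian tail bound (using $p\ge4$, $\delta<1/2$) and a union bound the event $\Omega_1=\{\max_{j\le p}|((I-P_J)\vxi)^T\vx_j|/\sqrt n\le\sigma\sqrt{2\log(p/\delta)}\}$ has probability $\ge1-\delta$, on which the second piece is $\le\tfrac{2\lambda}{1+\eps}\norma{\vdelta_{J^c}}_1$. Since $\vxi\mapsto\norm{P_J\vxi}_2$ is $1$‑Lipschitz with mean $\le\sigma\sqrt{\operatorname{rank}(P_J)}\le\sigma\sqrt s$, Gaussian concentration shows $\Omega_2=\{\norm{P_J\vxi}_2\le\sigma(\sqrt s+\sqrt{2\log(1/\delta)})\}$ has probability $\ge1-\delta$, on which the first piece is $\le2a\norm{\design\vdelta}_n$ with $a=\sigma(\sqrt s+\sqrt{2\log(1/\delta)})/\sqrt n$. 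Hence, on $\Omega_1\cap\Omega_2$ (probability $\ge1-2\delta$), the previous display becomes
\[
t^2+\tfrac{2\eps\lambda}{1+\eps}\norma{\vdelta_{J^c}}_1\ \le\ R+2a\norm{\design\vdelta}_n+2\lambda\norma{\vdelta_J}_1 .
\]

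Finally, a dichotomy on the $RE$ cone closes the argument. Compare $2\lambda\norma{\vdelta_J}_1$ with $\tfrac{2\eps\lambda}{1+\eps}\norma{\vdelta_{J^c}}_1$. If the former is smaller the $\norma{\vdelta_{J^c}}_1$ terms cancel and $t^2\le R+2a\norm{\design\vdelta}_n$. Otherwise $\norma{\vdelta_{J^c}}_1\le c_0\norma{\vdelta_J}_1$ with $c_0=1+1/\eps$, so $\vdelta\in\cC_{RE}'(s,c_0)$ (equivalently $\vdelta\in\cC_{RE}(s,c_0)$) and Condition \ref{assumption:RE-classic} gives $\norma{\vdelta_J}_1\le\sqrt s\,\norma{\vdelta}_2\le(\sqrt s/\kappa(s,c_0))\norm{\design\vdelta}_n$, hence $2\lambda\norma{\vdelta_J}_1\le2b\norm{\design\vdelta}_n$ with $b=\lambda\sqrt s/\kappa(s,c_0)=(1+\eps)\sigma\sqrt{2s\log(p/\delta)}/(\sqrt n\,\kappa(s,c_0))$. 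In both cases $t^2\le R+2(a+b)\norm{\design\vdelta}_n$; combining with $\norm{\design\vdelta}_n\le\norm{\design\hbeta-\vf}_n+\norm{\design\vbeta^*-\vf}_n=t+\sqrt R$ and solving the resulting quadratic in $t$ yields an oracle bound of the advertised shape, and after the careful bookkeeping described next it gives exactly $t^2\le R+(a+b)^2$, which is \eqref{oi-previous} since $\tfrac{\sigma^2}{n}\bigl(\tfrac{(1+\eps)\sqrt{2s\log(p/\delta)}}{\kappa(s,c_0)}+\sqrt s+\sqrt{2\log(1/\delta)}\bigr)^2=(a+b)^2$. The delicate point — and the one I expect to be the main obstacle — is extracting precisely the leading constant $1$ on $R$ together with the clean square $(a+b)^2$, with no spurious cross term in $\sqrt R$, from that last quadratic step; this is done by tightening the bound on $\norm{\design\vdelta}_n$ using the first‑order conditions for $\vbeta^*$ (which force $\design\vbeta^*=P_J\vf$, so that $\design\vbeta^*-\vf\perp\Span(\vx_j:j\in J)$), exactly as in the bookkeeping of \cite{dalalyan2017prediction,koltchinskii2011nuclear,MR2533469}.
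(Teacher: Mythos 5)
Your overall strategy -- the projection decomposition of the noise, the two events $\Omega_1,\Omega_2$ with the union bound and Gaussian concentration, and the dichotomy on the $RE$ cone -- is the standard one and matches what the paper does for the closely related \Cref{prop:previous-improved}; everything up to the inequality $t^2 \le R + 2(a+b)\norm{\design\vdelta}_n$ on $\Omega_1\cap\Omega_2$ is correct. The genuine gap is the final step, and the patch you propose does not close it. By ``cancelling the two $\norm{\design\vdelta}_n^2$ terms'' you have used only the weak basic inequality $f(\hbeta)\le f(\vbeta^*)$ and thrown away a term $-\norm{\design\vdelta}_n^2$ that is available for free. Without it, the only generic way to eliminate $\norm{\design\vdelta}_n$ is $\norm{\design\vdelta}_n\le t+\sqrt R$, and solving the resulting quadratic gives $t\le \sqrt R+2(a+b)$, i.e.\ $t^2\le R+4(a+b)\sqrt R+4(a+b)^2$: a cross term in $\sqrt R$ and a factor $4$, not \eqref{oi-previous}. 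Your suggested repair via the first-order conditions for $\vbeta^*$ (which give $\design\vbeta^*=P_J\vf$, hence $\design\vbeta^*-\vf\perp\Span(\vx_j:j\in J)$) does not rescue this: the identity
\begin{equation*}
t^2=R+\norm{\design\vdelta}_n^2+\tfrac2n(\design\vdelta_{J^c})^T(\design\vbeta^*-\vf)
\end{equation*}
still carries an uncontrolled cross term, because $\design\vdelta_{J^c}$ is \emph{not} orthogonal to $\design\vbeta^*-\vf$, so you cannot conclude $\norm{\design\vdelta}_n^2\le t^2-R$ or anything of that kind.

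The correct fix is the first-order (KKT) condition for $\hbeta$, not for $\vbeta^*$. Since the data-fit term is exactly quadratic, the minimality of $\hbeta$ gives the strengthened inequality \eqref{eq:almost-sure}, i.e.\ $t^2-R\le \tfrac2n\vxi^T\design\vdelta+2\lambda(\norma{\vbeta^*}_1-\norma{\hbeta}_1)-\norm{\design\vdelta}_n^2$. Carrying the extra $-\norm{\design\vdelta}_n^2$ through your two cases unchanged yields $t^2-R\le 2(a+b)\norm{\design\vdelta}_n-\norm{\design\vdelta}_n^2\le (a+b)^2$ by completing the square, which is exactly \eqref{oi-previous} with leading constant $1$ and no cross term. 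This is precisely how the paper's proof of \Cref{prop:previous-improved} in \Cref{appendix-previous-improved} closes, via the bound $2\norm{\design\vu}_n f(\vxi)/\sqrt n-\norm{\design\vu}_n^2\le f^2(\vxi)/n$; with that one change, the rest of your argument goes through verbatim.
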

An oracle inequality of the same kind as \eqref{oi-previous} was first obtained  in \cite[Theorem 6.1]{koltchinskii2011nuclear} and in a slightly less general form,
with some factor $C>1$ in front of $\norm{\design\vbeta - \vf}_n^2$ in  \cite{MR2533469}.
The numerical constants in Proposition \ref{prop:previous} are taken from the proof of Theorem 3 in \cite{dalalyan2017prediction} (cf. the inequality preceding (25) in \cite{dalalyan2017prediction}).

A notable feature of \Cref{prop:previous} and of other non-asymptotic bounds for the Lasso available in the literature is that the confidence level $1-2\delta$
is tied to the tuning parameter $\lambda$,  cf. \eqref{old-tuning}.
If a confidence level closer to one is desired (i.e., smaller $\delta$),
the previous results suggest that the tuning parameter should be increased according to the relationship \eqref{old-tuning}
between $\lambda$ and $\delta$. We claim that it is not needed. Indeed, the following proposition holds.

\begin{proposition}
    \label{prop:previous-improved}
    Let $p\ge 2$, $s\in\{1,...,p\}$, $\eps > 0$ and set $c_0=1+1/\epsilon$.
    Let $\hbeta$ be the Lasso estimator \eqref{eq:def-lasso} with tuning parameter
    \begin{equation}
        \lambda = (1+\eps)\sigma \sqrt{2\log(p)/n}.
        \label{new-tuning}
    \end{equation}
    Then for any $\delta\in(0,1)$,
    \begin{equation}
        \norm{\design\hbeta - \vf}_n^2
        \le
        \inf_{\vbeta\in\R^p: \norma{\vbeta}_0\le s}
        \norm{\design\vbeta - \vf}_n^2
        + \frac{\sigma^2}{n} \left(\frac{(1+\eps)\sqrt{2s\log p}}{\kappa(s,c_0)} + \sqrt s + \sqrt{2\log(1/\delta)}   + 2.8\right)^2.
        \label{eq:oi-previous-improved}
    \end{equation}
    holds with probability at least $1-\delta$.
\end{proposition}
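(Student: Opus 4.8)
The plan is to follow the classical route for Lasso oracle inequalities — basic inequality, cone membership, the Restricted Eigenvalue, and a convexity step for the sharp leading constant — keeping everything as in the proof behind Proposition~\ref{prop:previous}, the only substantive change being the control of the stochastic term, where the tuning parameter must be decoupled from the confidence level. The starting point is the basic inequality: for any $\vbeta$ with $\norma{\vbeta}_0\le s$, support $J$ and $\vu:=\hbeta-\vbeta$, optimality of $\hbeta$ in \eqref{eq:def-lasso} together with $\vy=\vf+\vxi$ gives
\begin{equation*}
    \norm{\design\hbeta-\vf}_n^2-\norm{\design\vbeta-\vf}_n^2
    \ \le\ \tfrac2n\inr{\vxi,\design\vu}\ +\ 2\lambda\norma{\vu_J}_1\ -\ 2\lambda\norma{\vu_{J^c}}_1 .
\end{equation*}

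The main work — and the step I expect to be the obstacle — is a Lasso analogue of the $H+G$ bound of \eqref{eq:def-H-G}: for every $\delta\in(0,1)$, with probability at least $1-\delta$, simultaneously for all $\vw\in\R^p$,
\begin{equation*}
    \tfrac2n\inr{\vxi,\design\vw}
    \ \le\ 2\sigma\sqrt{\tfrac{2\log p}{n}}\,\norma{\vw}_1
    \ +\ \tfrac{2\sigma}{\sqrt n}\bigl(c_\ast+\sqrt{2\log(1/\delta)}\bigr)\norm{\design\vw}_n
\end{equation*}
for a numerical constant $c_\ast$. The first term is a $\delta$-free ``$\ell_1$-budget'' that $\lambda=(1+\eps)\sigma\sqrt{2\log p/n}$ strictly dominates — the factor $1+\eps$ providing the slack needed for the cone argument, with cone constant $c_0=1+1/\eps$ as in Proposition~\ref{prop:previous} — while $\delta$ enters only through the coefficient of $\norm{\design\vw}_n$. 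To prove it I would introduce $\Phi(\vxi)=\sup\{\tfrac2n\inr{\vxi,\design\vw}-2\sigma\sqrt{2\log p/n}\,\norma{\vw}_1:\ \norm{\design\vw}_n\le1\}$, which is finite ($\le2\norm{\vxi}_n$) and, being a supremum of affine maps of $\vxi$ with gradients of Euclidean norm $\le2/\sqrt n$, is convex and $(2/\sqrt n)$-Lipschitz; positive homogeneity gives $\tfrac2n\inr{\vxi,\design\vw}\le2\sigma\sqrt{2\log p/n}\,\norma{\vw}_1+\norm{\design\vw}_n\Phi(\vxi)$ for all $\vw$, and Gaussian concentration yields $\Phi(\vxi)\le\E\Phi+\tfrac{2\sigma}{\sqrt n}\sqrt{2\log(1/\delta)}$ with probability $\ge1-\delta$. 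The remaining — and delicate — task is the bound $\E\Phi\le c_\ast\sigma/\sqrt n$ valid for all $p\ge2$: one uses that $\Phi(\vxi)=0$ as soon as $\norm{\vg}_\infty\le\sigma\sqrt{2\log p}$ (since then $\tfrac2{\sqrt n}\vg^T\vw\le2\sigma\sqrt{2\log p/n}\,\norma{\vw}_1$), while on the complementary, rare event a coordinatewise soft-thresholding estimate controls $\Phi$ by a multiple of $\bigl\|(\,|\vg|-\sigma\sqrt{2\log p}\,)_+\bigr\|_2$, whose second moment is $O(\sigma^2/(n(\log p)^{3/2}))$ because $\E(|g_j|-\sigma\sqrt{2\log p})_+^2=O(\sigma^2/(p(\log p)^{3/2}))$ and there are $p$ coordinates. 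The subtlety is that $\tfrac2n\inr{\vxi,\design\vw}$ depends on $\vw$ only through $\design\vw$, so the soft-thresholding has to be applied after passing to a minimal-$\ell_1$ preimage of $\design\vw$; keeping the resulting constant free of $\kappa(s,c_0)$ — for instance by restricting from the outset to $\vw\in\cC_{RE}(s,c_0)$, where $\norma{\vw}_1$ and $\norm{\design\vw}_n$ are comparable, and bookkeeping the constants carefully — is what produces the clean additive $2.8$ in \eqref{eq:oi-previous-improved}.

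Granting the stochastic bound, the rest is routine. Plugging it in with $\vw=\vu$, the $\ell_1$-budget is absorbed by $2\lambda\norma{\vu}_1$; rearranging and discarding nonnegative terms, either $\norm{\design\hbeta-\vf}_n\le\norm{\design\vbeta-\vf}_n$ (and there is nothing to prove) or $\vu$ lies in $\cC_{RE}(s,c_0)$ up to an additive $\norm{\design\vu}_n$-perturbation; a case split according to whether this perturbation dominates $\norma{\vu_J}_1$, followed by the $RE(s,c_0)$ bound $\norma{\vu_J}_1\le\sqrt s\,\norm{\design\vu}_n/\kappa(s,c_0)$, yields $\norm{\design\hbeta-\vf}_n^2-\norm{\design\vbeta-\vf}_n^2\le M\,\norm{\design\vu}_n$ with $M$ of order $\tfrac{\sigma}{\sqrt n}\bigl(\tfrac{(1+\eps)\sqrt{2s\log p}}{\kappa(s,c_0)}+\sqrt s+\sqrt{2\log(1/\delta)}+2.8\bigr)$; here the $\sqrt s$ appears, exactly as in Proposition~\ref{prop:previous}, as the Euclidean norm of the projection of $\vxi$ onto the span of the active columns $\{\vx_j:j\in J\}$. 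Finally the standard convexity argument — replacing, if necessary, $\hbeta$ by the point $\hbeta_t=\vbeta+t(\hbeta-\vbeta)$ of the segment $[\vbeta,\hbeta]$ at which $\norm{\design(\hbeta_t-\vbeta)}_n$ equals the appropriate threshold, which still satisfies the basic inequality because the Lasso objective is convex — converts $\norm{\design\hbeta-\vf}_n^2-\norm{\design\vbeta-\vf}_n^2\le M\,\norm{\design\vu}_n$ into $\norm{\design\hbeta-\vf}_n^2\le\norm{\design\vbeta-\vf}_n^2+\tfrac{\sigma^2}{n}(\cdots)^2$ with leading constant exactly $1$; taking the infimum over $s$-sparse $\vbeta$ gives \eqref{eq:oi-previous-improved}.
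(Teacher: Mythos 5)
Your architecture is in the right spirit (basic inequality, a uniform stochastic bound whose $\delta$-dependent part multiplies $\norm{\design\vw}_n$ rather than $\norma{\vw}_1$, Lipschitz concentration, then $RE$), but there are two genuine gaps. First, the claim $\E\Phi\le c_*\sigma/\sqrt n$ with a universal $c_*$ is exactly the hard point, and your sketch does not establish it: the soft-thresholding bound $\Phi\lesssim\bigl\|(|\vg|-\sigma\sqrt{2\log p})_+\bigr\|_2/\sqrt n$ requires passing from $\sum_j |w_j|(|g_j|-\sigma\sqrt{2\log p})_+$ to $\norma{\vw}_2$ by Cauchy--Schwarz, and relating $\norma{\vw}_2$ to the constraint $\norm{\design\vw}_n\le1$ reintroduces an $RE$-type constant (and $\sup\{\norma{\vw}_2:\norm{\design\vw}_n\le1\}=\infty$ whenever $\design$ has a kernel, so the minimal-$\ell_1$-preimage remark does not rescue a design-free constant). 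The paper sidesteps this entirely by concentrating around the \emph{median}, not the mean, of a single functional $f$ that already incorporates the penalty difference $\sqrt n\lambda(\norma{\vv_T}_1-\norma{\vv_{T^c}}_1)$: it suffices to exhibit one explicit event of probability $>1/2$ (a chi-square bound on $\norma{\Pi_T\vxi}_2$ intersected with a max-of-Gaussians bound on $\max_j|\vxi^T(I_{n\times n}-\Pi_T)\vx_j|/\sqrt n$) on which $f$ is controlled; nothing need be known about $f$ on the complement. That is the missing idea, and it is also what makes the additive constant $2.8$ clean and $\kappa$-free.

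Second, even granting your stochastic bound, the constants of \eqref{eq:oi-previous-improved} do not come out. Your budget subtracts the \emph{full} $\ell_1$ norm, $2\sigma\sqrt{2\log p/n}\,\norma{\vu}_1$; combined with $2\lambda\norma{\vu_J}_1-2\lambda\norma{\vu_{J^c}}_1$ and $\lambda=(1+\eps)\sigma\sqrt{2\log p/n}$ this yields the cone $\norma{\vu_{J^c}}_1\le\frac{2+\eps}{\eps}\norma{\vu_J}_1$ and a leading term $\frac{(2+\eps)\sqrt{2s\log p}}{\kappa(s,1+2/\eps)}$, i.e.\ cone constant $1+2/\eps$ and coefficient $2+\eps$ rather than $1+1/\eps$ and $1+\eps$, and with no separate $+\sqrt s$. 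The $\sqrt s$ in the proposition is produced by splitting $\vxi^T\design\vv$ into $\vxi^T\Pi_T\design\vv$ (bounded by $\norma{\Pi_T\vxi}_2\,\norm{\design\vv}_n\sqrt n\approx\sigma\sqrt{ns}\,\norm{\design\vv}_n$ via chi-square) and $\vxi^T(I_{n\times n}-\Pi_T)\design\vv_{T^c}$ (bounded by $\sigma\sqrt{2n\log p}\,\norma{\vv_{T^c}}_1$); your uniform bound contains no projection term, so your assertion that the $\sqrt s$ ``appears exactly as in Proposition~\ref{prop:previous}'' is inconsistent with your own setup --- you must choose one treatment of the on-support part of the stochastic term, and only the projection treatment gives the stated constants. (A minor further point: the closing convexity/segment argument is unnecessary here, since \eqref{eq:almost-sure} already carries the $-\norm{\design(\hbeta-\vbeta)}_n^2$ term and $2ab-b^2\le a^2$ finishes the job.)
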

The proof of \Cref{prop:previous-improved} is given in \Cref{appendix-previous-improved}.
Let us highlight some features of \Cref{prop:previous-improved} that are new.

First, the tuning parameter \eqref{new-tuning} does not depend on the confidence constant $\delta$.
The Lasso estimator with tuning parameter \eqref{new-tuning} enjoys the oracle inequality \eqref{eq:oi-previous-improved} for any
$\delta\in (0,1)$. This contrasts with \Cref{prop:previous} where the oracle inequality is for a single $\delta$ tied to the tuning parameter \eqref{old-tuning}. As a consequence, \eqref{eq:oi-previous-improved} immediately implies bounds for all moments of $ \norm{\design\hbeta - \vf}_n^2$, which cannot be obtained from Proposition \ref{prop:previous}.

Second, the probability that an oracle inequality
with error term of order $s\log(p)/n$ holds
is substantially closer to 1 than it was commonly understood before.
For example, take $\delta = p^{-s}$, which balances the remainder term in \eqref{eq:oi-previous-improved}.  Then,  \Cref{prop:previous-improved} yields that the Lasso estimator
with tuning parameter \eqref{new-tuning} converges with the rate smaller than $s\log(p)/n$ up to a multiplicative constant.
On the other hand, the choice $\delta= p^{-s}$ in \Cref{prop:previous}
yields only a suboptimal rate of order $s^2\log(p)/n$.

The constant term 2.8 in \eqref{eq:oi-previous-improved} is negligible.
Thus, in asymptotic regimes where $p\rightarrow \infty$ or $\delta\rightarrow 0$, the constant term 2.8 is
dominated by $\sqrt{\log p}$ or $\sqrt{\log(1/\delta)}$. If we ignore this constant term, the bound \eqref{eq:oi-previous-improved} strictly improves upon \eqref{oi-previous}.


In spite of these improvements, the result of  \Cref{prop:previous-improved} is not completely satisfying since only the rate $s\log(p)/n$ and not the optimal rate $(s/n)\log(p/s)$ is proved. In the next section,
we show that the optimal rate can be achieved by the Lasso estimator with tuning parameter of the order $\sigma\sqrt{\log(p/s)/n}$.

\section{Optimal rates  for the Lasso estimator}
\label{sec:lasso-4}

In this section, we denote by $\hbeta$ the Lasso estimator
defined by \eqref{eq:def-lasso}, and we derive upper bounds for its prediction and estimation errors.
As usual in the Lasso context, the argument contains two main ingredients. First, all randomness is removed from the problem by reducing the consideration to a suitably chosen random event of high probability. Second, the error bounds are derived on this event by a purely deterministic argument. In our case, such a deterministic argument is given in Theorem~\ref{thm:lasso-main-event} below, while the ``randomness removing tool'' is provided by the next theorem. As we will see in Section \ref{sec:slope}, this theorem is
common to the study of both the Lasso and the Slope estimators.

\begin{thm}
    \label{thm:main-event}
    Let $\delta_0\in(0,1)$ and let $\design\in \R^{n\times p}$ be a matrix such that $\max_{j=1,\dots,p}\norm{\design\ve_j}_n \le 1$.
    Let $H(\cdot)$ and $G(\cdot)$ be defined in \eqref{eq:def-H-G}.
    If $\vxi\sim\mathcal N(\vzero,\sigma^2 I_{n\times n})$,
     then the random event
    \begin{equation}
        \left\{
                \frac 1 n \vxi^T\design \vu
                \le
                \max\Big(
                    H(\vu)
                    ,
                    G(\vu)
                \Big), \ \forall \vu\in\R^p
        \right\}
        \label{eq:main-event}
    \end{equation}
    is of probability at least $1-{\delta_0/2}$.
\end{thm}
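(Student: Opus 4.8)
\emph{Proof strategy.} Write $L=\log(1/\delta_0)>0$ and $\Lambda(\vu)=\sum_{j=1}^p u_j^\sharp\sqrt{\log(2p/j)}$, so that $H(\vu)=(4+\sqrt2)\sigma\Lambda(\vu)/\sqrt n$ and $G(\vu)=(4+\sqrt2)\sigma\sqrt L\,\norm{\design\vu}_n/\sqrt n$. Since $\tfrac1n\vxi^T\design\vu=\tfrac1{\sqrt n}\langle\vg,\vu\rangle$ with $\vg=\tfrac1{\sqrt n}\design^T\vxi\sim\cN(\vzero,\tfrac{\sigma^2}{n}\design^T\design)$, and since both sides of the inequality in \eqref{eq:main-event} are positively homogeneous in $\vu$ (the case $\design\vu=\vzero\neq\vu$ is trivial, as then $\tfrac1n\vxi^T\design\vu=0\le H(\vu)$), the random event in \eqref{eq:main-event} coincides with
\[
\Bigl\{\sup_{\vu\in\cD}\langle\vg,\vu\rangle\le(4+\sqrt2)\sigma\Bigr\},\qquad \cD:=\bigl\{\vu\in\R^p:\ \Lambda(\vu)\le1,\ \norm{\design\vu}_n\le L^{-1/2}\bigr\}.
\]
Since $\log(2p/j)\ge\log2$ for all $j\le p$, we have $\Lambda(\vu)\ge\sqrt{\log2}\,\norma{\vu}_1$, so $\cD$ is a compact subset of $\R^p$, and it remains to show $\PP\bigl(S\le(4+\sqrt2)\sigma\bigr)\ge1-\delta_0/2$ for $S:=\sup_{\vu\in\cD}\langle\vg,\vu\rangle$.

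The plan is to combine a bound on $\E S$ with Gaussian concentration. As a function of $\vxi$, $S=\sup_{\vu\in\cD}\tfrac1{\sqrt n}\vxi^T\design\vu$ is Lipschitz with constant $\ell:=\sup_{\vu\in\cD}\norm{\design\vu}_n$; by construction $\ell\le L^{-1/2}$, and since $\cD\subseteq\{\norma{\vu}_1\le(\log2)^{-1/2}\}$ and $\norm{\design\ve_j}_n\le1$, also $\ell\le(\log2)^{-1/2}$. The Gaussian concentration inequality for Lipschitz functions of $\vxi\sim\cN(\vzero,\sigma^2 I_{n\times n})$ then yields $\PP(S\ge\E S+\sigma\ell t)\le e^{-t^2/2}$ for every $t>0$; taking $t=\sqrt{2\log(2/\delta_0)}$ gives
\[
S\le\E S+\sigma\ell\sqrt{2\log(2/\delta_0)}\qquad\text{with probability at least }1-\delta_0/2.
\]
To bound $\E S$, note $\cD\subseteq\{\Lambda(\vu)\le1\}$; by the rearrangement inequality and the fact that a linear functional over the unit ball of the sorted-$\ell_1$ norm is maximized at an extreme point,
\[
\E S\ \le\ \E\sup_{\vu:\,\Lambda(\vu)\le1}\langle\vg,\vu\rangle\ =\ \E\max_{1\le k\le p}\frac{\sum_{j=1}^k g_j^\sharp}{\sum_{j=1}^k\sqrt{\log(2p/j)}}\ \le\ c_1\sigma,
\]
where $c_1$ is an absolute constant. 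The last inequality is the standard estimate on the expected dual sorted-$\ell_1$ norm of a Gaussian vector: it follows from $\E g_k^\sharp\le\sigma\bigl(\sqrt{2\log(2p/k)}+O(1)\bigr)$ and the elementary bounds \eqref{eq:algebra_stirling}, the passage from the maximum of the expectations to the expectation of the maximum over $k$ costing only $O(\sigma)$ because each of the $p$ ratios is $(\log2)^{-1/2}$-Lipschitz in $\vxi$.

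Combining the two displays, $S\le c_1\sigma+\sigma\ell\sqrt{2\log(2/\delta_0)}$ with probability at least $1-\delta_0/2$, so it suffices to verify the deterministic inequality $c_1+\ell\sqrt{2\log(2/\delta_0)}\le4+\sqrt2$ for all $\delta_0\in(0,1)$. Here the two bounds on $\ell$ are used in complementary ranges: for $\delta_0\le\tfrac12$ one has $\ell\sqrt{2\log(2/\delta_0)}\le\sqrt{2\log(2/\delta_0)/L}=\sqrt{2+2\log2/L}\le2$ since $L\ge\log2$; for $\delta_0>\tfrac12$ one has $\ell\le(\log2)^{-1/2}$ and $\log(2/\delta_0)<2\log2$, so again $\ell\sqrt{2\log(2/\delta_0)}<2$. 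Hence the statement follows as soon as $c_1\le2+\sqrt2$. I expect the only genuine difficulty to lie in this last point — pinning down $c_1$ from the order-statistics estimate with a small enough constant, in particular controlling the maximum over $k$ cheaply, so that the final numerical constant is no larger than $4+\sqrt2$; the reduction to $\cD$ and the concentration argument are routine.
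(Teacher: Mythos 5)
Your reduction to $\sup_{\vu\in\cD}\langle\vg,\vu\rangle\le(4+\sqrt2)\sigma$ and the Gaussian concentration step are exactly the skeleton of the paper's proof (its Proposition \ref{prop:deviation-abstract-norm-N}, with $N(\vu)=\sigma\Lambda(\vu)/\sqrt n$ and the Lipschitz constant controlled by the constraint $\norm{\design\vu}_n\le L^{-1/2}$). The genuine gap is the step you yourself flag: bounding the central value of $S$ by $c_1\sigma$ with $c_1\le 2+\sqrt2$. Your sketched justification does not work. Knowing that each ratio $R_k=\sum_{j\le k}g_j^\sharp/\sum_{j\le k}\sqrt{\log(2p/j)}$ is $(\log 2)^{-1/2}$-Lipschitz in $\vxi$ only gives $\E\max_k R_k\le\max_k\E R_k+(\log 2)^{-1/2}\sigma\sqrt{2\log p}$ by the standard maximal inequality for sub-Gaussian variables; the passage from the maximum of the means to the mean of the maximum costs $O(\sigma\sqrt{\log p})$, not $O(\sigma)$, unless you exploit the strong dependence between the $R_k$'s by some other device. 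Moreover there is very little slack: even the individual means are already of size roughly $\sqrt{8/3}\,\sigma\approx1.63\,\sigma$ (via $\E\sum_{j\le k}(g_j^\sharp)^2\le\tfrac83 k\sigma^2\log(2p/k)$, which is how one handles arbitrarily correlated $g_j$'s), so you have well under $2\sigma$ of room for the max over $k$, and a crude tail integration of the dyadic union bound lands uncomfortably close to, or above, $2+\sqrt2$.

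The paper sidesteps this entirely by centering the concentration inequality at the \emph{median} rather than the mean. It first shows (Propositions \ref{prop:event-lasso}--\ref{prop:event-slope}) that $\max_j g_j^\sharp/(\sigma\sqrt{\log(2p/j)})\le 4$ with probability at least $1/2$, using the exponential-moment bound $\E\exp(3(g_j^\sharp)^2/(8\sigma^2))\le 2$ and a dyadic union bound whose failure probabilities sum to exactly $1/2$; since $\sup_{\Lambda(\vu)\le1}\langle\vg,\vu\rangle\le\sigma\max_j g_j^\sharp/(\sigma\sqrt{\log(2p/j)})$, this immediately bounds $\Med[S]$ by $4\sigma$ with no integration of the tail. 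The one-sided median concentration \cite[inequality (1.4)]{LT:91} then contributes exactly $\sigma\ell\sqrt{2\log(1/\delta_0)}\le\sqrt2\,\sigma$ at level $\delta_0/2$, giving $4+\sqrt2$. So your architecture is right, but to close it with the stated constant you should replace $\E S$ by $\Med[S]$ throughout: a probability-$1/2$ bound is much cheaper to obtain with a small constant than an expectation bound, and this is precisely where the constant $4+\sqrt2$ comes from.
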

The proof of Theorem \ref{thm:main-event} is given in Appendix \ref{s:proof-main-event}.
We now discuss consequences of Theorem \ref{thm:main-event} leading to tighter bounds. We will need the following condition on the design matrix $\design$ that will be called the {\it Strong Restricted Eigenvalue} condition or shortly the $SRE$ condition. Let $c_0>0$ and $s\in\{1,\dots,p\}$ be fixed.
\begin{condition}[$SRE(s,c_0)$]
    \label{assumption:strong-re-lasso}
    The design matrix $\design$ satisfies $\norm{\design\ve_j}_n \le 1$ for all $j=1,\dots,p$,
    and
    \begin{equation}\label{def:SRE}
        \theta(s,c_0) \triangleq \min_{\vdelta\in\cC_{SRE}(s,c_0):\vdelta\ne\vzero} \frac{\norm{\design\vdelta}_n}{\norma{\vdelta}_2}
        > 0,
    \end{equation}
    where
    $
    \cC_{SRE} (s,c_0)  \triangleq\left\{
                \vdelta\in\R^p: \norma{\vdelta}_1 \le (1+c_0) \sqrt s \norma{\vdelta}_2
        \right\}
        $
        is a cone in $\R^p$.
\end{condition}

Inequality \eqref{def:SRE} differs from its analog \eqref{eq:RE-classic} in the $RE(s,c_0)$ condition only in the definition of the cone $\cC_{SRE} (s,c_0)$, {and in general} \eqref{def:SRE} is more constraining.
Indeed, the cone $\cC_{RE}(s,c_0)$ of the $RE(s,c_0)$ condition is the set of all $\vdelta\in\R^p$ such that
$    \norma{\vdelta}_1 \le (1+c_0) \sum_{j=1}^s \delta_j^\sharp.
$
By the Cauchy-Schwarz inequality,
$ \sum_{j=1}^s \delta_j^\sharp \le  \sqrt s (\sum_{j=1}^s (\delta_j^\sharp)^2)^{1/2} \le \sqrt s \norma{\vdelta}_2$
so that
\begin{equation}\label{CC}
    \cC_{RE}(s,c_0)
    \subseteq
    \cC_{SRE}(s,c_0).
\end{equation}

Note that we have included the requirement that $\norm{\design\ve_j}_n \le 1$
for all $j$ in the $RE$ and the $SRE$ conditions.
It can be replaced by $\norm{\design\ve_j}_n \le \theta_0$ for some
$\theta_0>0$ but for brevity and w.l.o.g. we take here $\theta_0=1$.
Interestingly, due to the inclusion of the assumption $\norm{\design\ve_j}_n \le 1$,
the $RE$ condition becomes equivalent to the $SRE$ condition up to absolute constants.
Moreover, the equivalence further extends to the $s$-sparse eigenvalue
condition.
This is detailed in  Section \ref{sec:assumptions} below.

Under the $SRE$ condition, we now establish a deterministic result, which is central in our argument.
We first introduce some notation.
Let $\gamma\in(0,1)$ be a constant.
For any tuning parameter $\lambda>0$, set
\begin{equation}
    \delta(\lambda)
    \triangleq
    \exp\left( - \left(\frac{\gamma \lambda\sqrt{n}}{{(4+\sqrt 2)}\sigma}\right)^2 \right)
    \qquad
    \text{ so that }
    \qquad
    \lambda = \frac{{(4+\sqrt 2)}\sigma}{\gamma} \sqrt{\frac{\log({1}/\delta(\lambda))}{n} }.
\end{equation}
For given $s\in\{1,\dots,p\}$,
the following theorem holds under the condition
\begin{equation}
    \lambda \ge \frac{{(4+\sqrt 2)}\sigma}{\gamma} \sqrt{\frac{\log(2ep/s)}{n}}
    \qquad
    \text{or equivalently}
    \qquad
    \delta(\lambda) \le {s/(2ep)}.
    \label{eq:tuning-lambda}
\end{equation}

\begin{thm}\label{thm:lasso-main-event}
    Let $s\in\{1,\dots,p\}$, $\gamma\in (0,1)$ and $\tau\in[0,1 - \gamma)$.
    Assume that the $SRE(s,c_0)$ condition holds with
    $c_0 = c_0(\gamma,\tau) = \frac{ 1+ \gamma  + \tau}{1 - \gamma - \tau}$.
    Let $\lambda$ be a tuning parameter such that
    \eqref{eq:tuning-lambda} holds.
    Let $\delta_0\in(0,1)$.
    Then, on the event \eqref{eq:main-event},
    the Lasso estimator $\hbeta$
    with tuning parameter $\lambda$
    satisfies
    \begin{equation}
        2\tau \lambda \norma{\hbeta - \vbeta}_1
        +
        \norm{\design\hbeta - \vf}^2_n
        \le
         \norm{\design\vbeta - \vf}^2_n
         + C_{\gamma,\tau}(s,\lambda,\delta_0)
          \lambda^2 s          ,
        \label{eq:soi-lasso1}
    \end{equation}
    for all $\vbeta\in\R^p$ such that $\norma{\vbeta}_0\le s$, and all $\vf\in\R^n$,
    where
    \begin{equation*}
        C_{\gamma,\tau}(s,\lambda,\delta_0)
        \triangleq
        (1+ \gamma + \tau)^2
        \left(\frac{  \log({1}/\delta_0)}{s\log({1}/\delta(\lambda))} \vee \frac1{\theta^2(s,c_0(\gamma,\tau))  } \right).
    \end{equation*}
    Furthermore, if $\vf = \design\vbeta^*$ for some $\vbeta^*\in\R^p$ with $\norma{\vbeta^*}_0 \le s$
    then on the event \eqref{eq:main-event}, we have for any $1\leq q\leq2$,
    \begin{equation}\label{eq:lasso_ell_q}
    \norma{\hbeta-\vbeta^*}_q\leq \Big(\frac{C_{\gamma, \tau}}{2\tau}\Big)^{2/q-1}\Big(\frac{C_{\gamma, 0}}{1+\gamma}\Big)^{2-2/q}\lambda s^{1/q}.
    \end{equation}
\end{thm}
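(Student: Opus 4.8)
The plan is to run the by now standard two step scheme for the Lasso: derive a purely deterministic inequality valid on the event \eqref{eq:main-event}, then close it using the $SRE(s,c_0)$ condition and the constraint \eqref{eq:tuning-lambda} on $\lambda$. Fix an $s$-sparse competitor $\vbeta$, write $\vu=\hbeta-\vbeta$, $S=\supp(\vbeta)$ (so $|S|\le s$), $a=\norm{\design\vbeta-\vf}_n$, $b=\norm{\design\hbeta-\vf}_n$. The first ingredient is a \emph{margin-improved} basic inequality: since $\hbeta$ minimizes a convex objective whose smooth part is exactly quadratic, optimality of $\hbeta$ together with the subgradient inequality for $\norma{\cdot}_1$ gives, after substituting $\vy=\vf+\vxi$ and using $\norma{\vbeta}_1-\norma{\hbeta}_1\le\norma{\vu_S}_1-\norma{\vu_{S^c}}_1$,
\[
 b^2+\norm{\design\vu}_n^2\ \le\ a^2+\tfrac2n\vxi^T\design\vu+2\lambda\norma{\vu_S}_1-2\lambda\norma{\vu_{S^c}}_1 .
\]
The extra term $\norm{\design\vu}_n^2$ on the left (absent from the textbook basic inequality) is what will eventually allow the \emph{sharp} constant, i.e. coefficient exactly $1$ in front of $a^2$ and the stated $C_{\gamma,\tau}$ in the remainder. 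On the event \eqref{eq:main-event} we bound $\tfrac1n\vxi^T\design\vu\le\max(H(\vu),G(\vu))$ and split according to which term dominates.

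In the $H$-branch, split $H(\vu)$ at index $s$. For $j\le s$, Cauchy--Schwarz, $\bigl(\sum_{j=1}^s(u_j^\sharp)^2\bigr)^{1/2}\le\norma{\vu}_2$, the estimate \eqref{eq:algebra_stirling} on $\sum_{j=1}^s\log(2p/j)$, and \eqref{eq:tuning-lambda} give ${(4+\sqrt2)}\sigma\sum_{j=1}^s u_j^\sharp\sqrt{\log(2p/j)/n}\le\gamma\lambda\sqrt s\,\norma{\vu}_2$; for $j>s$, $\sqrt{\log(2p/j)}\le\sqrt{\log(2ep/s)}$ and $\sum_{j>s}u_j^\sharp\le\norma{\vu_{S^c}}_1$ give $\le\gamma\lambda\norma{\vu_{S^c}}_1$. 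Adding $2\tau\lambda\norma{\vu}_1$ to both sides and bounding $\norma{\vu_S}_1\le\sqrt s\,\norma{\vu}_2$, the $\norma{\vu_{S^c}}_1$-terms collect into $-2(1-\gamma-\tau)\lambda\norma{\vu_{S^c}}_1$ and we reach
\[
 b^2+\norm{\design\vu}_n^2+2\tau\lambda\norma{\vu}_1\ \le\ a^2+2(1+\gamma+\tau)\lambda\sqrt s\,\norma{\vu}_2-2(1-\gamma-\tau)\lambda\norma{\vu_{S^c}}_1 .
\]
Now the decisive dichotomy: if $(1+\gamma+\tau)\sqrt s\,\norma{\vu}_2\le(1-\gamma-\tau)\norma{\vu_{S^c}}_1$, the right-hand side is $\le a^2$ and \eqref{eq:soi-lasso1} holds trivially; otherwise $\norma{\vu_{S^c}}_1< c_0(\gamma,\tau)\sqrt s\,\norma{\vu}_2$, hence $\norma{\vu}_1\le(1+c_0(\gamma,\tau))\sqrt s\,\norma{\vu}_2$, i.e. $\vu\in\cC_{SRE}(s,c_0(\gamma,\tau))$, so $\norma{\vu}_2\le\norm{\design\vu}_n/\theta(s,c_0(\gamma,\tau))$; dropping the negative term and using Young's inequality, $2(1+\gamma+\tau)\lambda\sqrt s\,\norma{\vu}_2\le\norm{\design\vu}_n^2+(1+\gamma+\tau)^2\lambda^2 s/\theta^2$, which cancels $\norm{\design\vu}_n^2$ and gives \eqref{eq:soi-lasso1} with the $1/\theta^2$ branch of $C_{\gamma,\tau}$.

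The $G$-branch is handled the same way: after adding $2\tau\lambda\norma{\vu}_1$, split on whether $(1+\tau)\sqrt s\,\norma{\vu}_2\le(1-\tau)\norma{\vu_{S^c}}_1$. In the first sub-case the $\vu_S$-/$\vu_{S^c}$-terms cancel and Young's inequality on $2G(\vu)$ leaves only ${(4+\sqrt2)^2}\sigma^2\log(1/\delta_0)/n$, which is $\le C_{\gamma,\tau}\lambda^2 s$ because $\lambda^2={(4+\sqrt2)^2}\sigma^2\gamma^{-2}\log(1/\delta(\lambda))/n$ and hence $C_{\gamma,\tau}\lambda^2 s\ge(1+\gamma+\tau)^2\gamma^{-2}{(4+\sqrt2)^2}\sigma^2\log(1/\delta_0)/n$. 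In the second sub-case $\vu\in\cC_{SRE}(s,c_0(\gamma,\tau))$ again, and Young's inequality applied to the combined $\norm{\design\vu}_n$-coefficient $2{(4+\sqrt2)}\sigma\sqrt{\log(1/\delta_0)/n}+2(1+\tau)\lambda\sqrt s/\theta$, after cancelling $\norm{\design\vu}_n^2$, leaves a remainder $\lambda^2 s\,(\gamma\sqrt R+(1+\tau)\sqrt T)^2$ with $R=\log(1/\delta_0)/(s\log(1/\delta(\lambda)))$ and $T=1/\theta^2$, and the elementary bound $\gamma\sqrt R+(1+\tau)\sqrt T\le(1+\gamma+\tau)\sqrt{R\vee T}$ identifies it with $C_{\gamma,\tau}\lambda^2 s$. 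For \eqref{eq:lasso_ell_q} I would take $\vbeta=\vbeta^*$, so $a=0$: \eqref{eq:soi-lasso1} with $\tau>0$ yields $\norma{\hbeta-\vbeta^*}_1\le\frac{C_{\gamma,\tau}}{2\tau}\lambda s$, while re-running the argument with $\tau=0$ shows (the $a=0$ inequality forces $\hbeta-\vbeta^*\in\cC_{SRE}(s,c_0(\gamma,0))$) that $\norm{\design(\hbeta-\vbeta^*)}_n^2\le C_{\gamma,0}\lambda^2 s$, whence $\norma{\hbeta-\vbeta^*}_2\le\frac{C_{\gamma,0}}{1+\gamma}\lambda\sqrt s$ via $SRE$; log-convexity of $\ell_q$-norms, $\norma{\vh}_q\le\norma{\vh}_1^{2/q-1}\norma{\vh}_2^{2-2/q}$, then gives \eqref{eq:lasso_ell_q}, since the exponent of $s$ is $(2/q-1)+(1-1/q)=1/q$.

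The main obstacle is precisely this \emph{sharpness}: getting coefficient $1$ on the oracle term and the clean $C_{\gamma,\tau}$, rather than a bound degraded by a multiplicative factor on $\norm{\design\vbeta-\vf}_n^2$ or by a spurious factor $4$ in the remainder. This is what dictates both using the margin-improved basic inequality (to carry an extra $\norm{\design\vu}_n^2$ on the left) and organizing the proof around the in-cone/far-out-of-cone dichotomy, which lets one absorb the term $2(1+\gamma+\tau)\lambda\sqrt s\norma{\vu}_2$ by Young's inequality against $\norm{\design\vu}_n^2$, instead of the lossy $\norm{\design\vu}_n\le a+b$ that would reintroduce an uncontrolled cross term of order $a\lambda\sqrt s$ and break the oracle inequality. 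The remaining care is the bookkeeping making the $H$- and $G$-branch remainders collapse to the single constant $C_{\gamma,\tau}$.
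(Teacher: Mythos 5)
Your proposal is correct and follows essentially the same route as the paper's proof: the margin-improved basic inequality (keeping $\|\design\vu\|_n^2$ on the left), the split according to which of $H(\vu)$ or $G(\vu)$ dominates on the event \eqref{eq:main-event}, the in-cone/out-of-cone dichotomy feeding into $SRE$ plus Young's inequality, and the $\ell_1$/$\ell_2$ interpolation for \eqref{eq:lasso_ell_q}. The only cosmetic differences are that the paper phrases the norm comparison via a sorted-rearrangement lemma (shared with the Slope analysis) rather than via $\norma{\vu_S}_1-\norma{\vu_{S^c}}_1$, and organizes the $G$-branch by using $\norma{\vu}_2\le\sqrt{\log(1/\delta_0)/(s\log(1/\delta(\lambda)))}\,\norm{\design\vu}_n$ directly from the case condition instead of your secondary sub-dichotomy.
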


Theorem \ref{thm:lasso-main-event} is proved in Appendix \ref{sec:proof-lasso}.
Before the statement of its corollaries, a few comments are in order.

The conclusions of Theorem \ref{thm:lasso-main-event} hold on the event \eqref{eq:main-event}, which is independent of $\gamma$ and $\tau$.
Thus, on the event \eqref{eq:main-event}, for all choices of $\tau,\gamma$ and $\lambda$ such that \eqref{eq:tuning-lambda} holds,
the oracle inequality \eqref{eq:soi-lasso1}
and the estimation bound \eqref{eq:lasso_ell_q} are satisfied.

The constants $\gamma$ and $\tau$ are
such that $\gamma+\tau<1$.
For the ease of {presentation}, the particular choice $\gamma = 1/2$ and $\tau = 1/4$ will be used below
to derive two corollaries of Theorem \ref{thm:lasso-main-event}.
If $\gamma=1/2$ and $\tau = 1/4$, then the constants in Theorem  \ref{thm:lasso-main-event}
have the form
\begin{equation}
    c_0 = 7,
    \qquad
    (1+\gamma + \tau)^2 = \frac{49}{16},
    \qquad
    \frac{1+\gamma}{1-\gamma} = 3,
    \qquad
    1 +  \gamma = 3/2,
    \label{eq:simple-ineq-constants}
\end{equation}
while inequality \eqref{eq:lasso_ell_q} can be transformed into
\begin{equation}\label{eq:lasso_ell_q-bis}
    \norma{\hbeta-\vbeta^*}_q\leq  \frac{49}{8}\left(\frac{  \log({1}/\delta_0)}{s\log({1}/\delta(\lambda))} \vee \frac1{\theta^2(s,7)  } \right)\lambda s^{1/q},
    \end{equation}
where we have used that $\theta^2(s,c_0)\le \theta^2(s,\frac{ 1+ \gamma }{1 - \gamma })$.

We now take a closer look at the constant $C_{\gamma,\tau}(s,\lambda,\delta_0)$.
This constant is always greater than or equal to $(1+\gamma+\tau)^2/\theta^2(s,c_0)$.
Furthermore, the value
\begin{equation}
    \delta_0^* = \left(\delta(\lambda)\right)^{\frac{s}{\theta^2(s,c_0)}}
    \label{eq:choice-explicit-delta0}
\end{equation}
is the smallest $\delta_0\in(0,1)$ such that
$C_{\gamma,\tau}(s,\lambda,\delta_0) = (1+\gamma+\tau)^2/\theta^2(s,c_0)$.
If $\lambda$ satisfies \eqref{eq:tuning-lambda},
then
$$
    \delta_0^* \le \Big(\frac{s}{2ep}\Big)^{\frac{s}{\theta^2(s,c_0)}}.
$$
Using these remarks we obtain the following corollary
of Theorems \ref{thm:main-event} and \ref{thm:lasso-main-event}
with the choice $\gamma=1/2,\tau=1/4$, and $\delta_0 = \delta_0^*$.

\begin{corollary}
    \label{cor:lasso-deviation}
    Let $s\in\{1,\dots,p\}$.
    Assume that the
    $SRE(s,7)$ condition holds.
    Let $\hbeta$ be the Lasso estimator with tuning parameter $\lambda$ satisfying
    \eqref{eq:tuning-lambda} for $\gamma = 1/2$.
    Then, with probability at least $1-{\frac 1 2} \left(\frac{s}{2ep}\right)^{\frac{s}{\theta^2(s,7)}}$,
    we have
    \begin{align}
        \frac \lambda 2 \norma{\hbeta - \vbeta}_1
        +
        \norm{\design\hbeta - \vf}^2_n
        \le
         \norm{\design\vbeta - \vf}^2_n
         + \frac{49 \lambda^2 s}{16 \theta^2(s,7)}
                 \label{eq:loasso-explicit-constants-soi}
    \end{align}
    for all $\vbeta\in\R^p$ such that $\norma{\vbeta}_0\le s$, and all $\vf\in\R^n$.
    Furthermore, if $\vf = \design\vbeta^*$ for some $\vbeta^*\in\R^p$ with $\norma{\vbeta^*}_0 \le s$
    then, for any $1\leq q\leq2$,
    \begin{align}
        \mathbb P\left(
        \norma{\hbeta-\vbeta^*}_q
        \le \frac{49 \lambda s^{1/q}}{8\theta^2(s,7)}
         \right)
         \ge 1-{\frac 1 2} \left(\frac{s}{2ep}\right)^{\frac{s}{\theta^2(s,7)}}.
        \label{eq:loasso-explicit-constants-estimation-ell2}
    \end{align}
\end{corollary}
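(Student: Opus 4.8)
The plan is to obtain \Cref{cor:lasso-deviation} as a direct specialization of \Cref{thm:main-event} and \Cref{thm:lasso-main-event} to the values $\gamma = 1/2$, $\tau = 1/4$, and $\delta_0 = \delta_0^*$, where $\delta_0^*$ is the quantity defined in \eqref{eq:choice-explicit-delta0}. First I would record that these choices are admissible in \Cref{thm:lasso-main-event}: indeed $\gamma\in(0,1)$, $\tau\in[0,1-\gamma)$, and $c_0(\gamma,\tau)=\frac{1+\gamma+\tau}{1-\gamma-\tau}=7$, so the hypothesis $SRE(s,7)$ of the corollary is exactly the hypothesis $SRE(s,c_0(\gamma,\tau))$ needed there, and the tuning requirement \eqref{eq:tuning-lambda} with $\gamma=1/2$ is precisely the one assumed in the corollary. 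The elementary identities \eqref{eq:simple-ineq-constants} then give $(1+\gamma+\tau)^2=49/16$ and $2\tau=1/2$.

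Next I would identify the event and its probability. By \Cref{thm:main-event} the event \eqref{eq:main-event} has probability at least $1-\delta_0^*/2$, and this event does not depend on $\gamma$ or $\tau$. The point, already observed in the discussion preceding the corollary, is that $\delta_0^*$ is precisely the value of $\delta_0$ at which $\frac{\log(1/\delta_0)}{s\log(1/\delta(\lambda))}=\frac1{\theta^2(s,7)}$, so that the remainder constant collapses to $C_{\gamma,\tau}(s,\lambda,\delta_0^*)=(1+\gamma+\tau)^2/\theta^2(s,7)=\frac{49}{16\,\theta^2(s,7)}$; moreover, since $\lambda$ satisfies \eqref{eq:tuning-lambda} we have $\delta(\lambda)\le s/(2ep)$, whence $\delta_0^*=(\delta(\lambda))^{s/\theta^2(s,7)}\le(s/(2ep))^{s/\theta^2(s,7)}$, turning the probability lower bound into $1-\frac12(s/(2ep))^{s/\theta^2(s,7)}$ as claimed.

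Finally, on the event \eqref{eq:main-event} I would invoke the two conclusions of \Cref{thm:lasso-main-event}. Substituting $2\tau=1/2$ and the collapsed constant into \eqref{eq:soi-lasso1} gives \eqref{eq:loasso-explicit-constants-soi} for every $\vbeta\in\R^p$ with $\norma{\vbeta}_0\le s$ and every $\vf\in\R^n$. For the estimation error I would use the already-simplified form \eqref{eq:lasso_ell_q-bis} of \eqref{eq:lasso_ell_q}, valid for $\gamma=1/2,\tau=1/4$; evaluated at $\delta_0=\delta_0^*$ the maximum inside it equals $1/\theta^2(s,7)$, yielding $\norma{\hbeta-\vbeta^*}_q\le\frac{49\lambda s^{1/q}}{8\theta^2(s,7)}$ whenever $\vf=\design\vbeta^*$ with $\norma{\vbeta^*}_0\le s$. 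Since both conclusions hold on the single event \eqref{eq:main-event}, this finishes the proof. There is no genuine obstacle here: the only step requiring a little care is the collapse of $C_{\gamma,\tau}(s,\lambda,\delta_0)$ and of the maximum in \eqref{eq:lasso_ell_q-bis} at $\delta_0=\delta_0^*$, together with the inequality $\delta_0^*\le(s/(2ep))^{s/\theta^2(s,7)}$, and both are immediate from \eqref{eq:choice-explicit-delta0} and \eqref{eq:tuning-lambda}.
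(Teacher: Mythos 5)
Your proposal is correct and follows exactly the paper's own route: the corollary is obtained by specializing Theorems \ref{thm:main-event} and \ref{thm:lasso-main-event} to $\gamma=1/2$, $\tau=1/4$, $\delta_0=\delta_0^*$ from \eqref{eq:choice-explicit-delta0}, noting $c_0(\gamma,\tau)=7$, the collapse of $C_{\gamma,\tau}$ and of the maximum in \eqref{eq:lasso_ell_q-bis} at $\delta_0=\delta_0^*$, and the bound $\delta_0^*\le(s/(2ep))^{s/\theta^2(s,7)}$ from \eqref{eq:tuning-lambda}. No gaps.
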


Since $\theta^2(s,7)\le 1$, the probability in Corollary~\ref{cor:lasso-deviation} is greater than $1-{\frac 1 2 }\left(\frac{s}{2ep}\right)^s.$
If the tuning parameter is chosen
such that \eqref{eq:tuning-lambda} holds with equality,
then
$\lambda^2s$ is equal to
\begin{equation*}
    \frac{\sigma^2s\log(2ep/s)}{n}
\end{equation*}
up to a multiplicative constant.
This is the minimax rate with respect to the prediction error over
the class of all $s$-sparse vectors $B_0(s) = \{\vbeta\in\R^p: \norma{\vbeta}_0\le s\}$. The rate $ \lambda s^{1/q}$ in \eqref{eq:loasso-explicit-constants-estimation-ell2} is minimax optimal for the $\ell_q$ estimation problem.
A more detailed discussion of the minimax rates
is given in \Cref{sec:lower}.

Finally, the conclusions of Theorem \ref{thm:lasso-main-event}
hold for all $\delta_0 \le \delta_0^*$.
This allows us to integrate the oracle inequality \eqref{eq:soi-lasso1}
and the estimation bound \eqref{eq:lasso_ell_q} to obtain
the following results in expectation.

\begin{corollary}
    \label{cor:lasso-E}
    Let $s\in\{1,\dots,p\}$.
    Assume that the
    $SRE(s,7)$ condition holds.
    Let $\hbeta$ be the Lasso estimator with tuning parameter $\lambda$ satisfying
    \eqref{eq:tuning-lambda} for $\gamma = 1/2$.
    Then,
    \begin{equation}
        \E \left[ \frac \lambda 2 \norma{\hbeta - \vbeta}_1
                +
                \norm{\design\hbeta - \vf}^2_n
        \right]
        \le
        \norm{\design\vbeta - \vf}^2_n
        + \frac{49\lambda^2 s}{16} \left(
            \frac{1}{\theta^2(s,7)}
            + \frac{1}{{2}\log(2ep)}
        \right)
        \label{eq:soi-expectation-lasso}
    \end{equation}
    for all $\vbeta\in\R^p$ such that $\norma{\vbeta}_0\le s$, and all $\vf\in\R^n$.
    Furthermore, if $\vf=\design\vbeta^*$
    for some $\vbeta^*\in\R^p$ with $\norma{\vbeta^*}_0 \le s$
    then, for any $1\leq q\leq2$,
    \begin{equation}
        \E \left[ \norma{\vbeta^*-\hbeta}_q^q \right]
        \le
        \frac{(49)^q\lambda^q s}{8^q}
        \left(
            \frac{1}{\theta^{2q}(s,7)}
            +
            \frac{{1}}{(\log(2ep))^q}
        \right)
        .
        \label{eq:esimtation-ell2-expectation-lasso}
    \end{equation}
\end{corollary}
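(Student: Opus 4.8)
I would deduce both inequalities by integrating tails over the whole family of high-probability statements supplied by Theorems~\ref{thm:main-event} and~\ref{thm:lasso-main-event} with $\gamma=1/2$, $\tau=1/4$. Fix $\vbeta$ with $\norma{\vbeta}_0\le s$ and $\vf\in\R^n$, and set $W \triangleq \tfrac{\lambda}{2}\norma{\hbeta-\vbeta}_1 + \norm{\design\hbeta-\vf}_n^2 - \norm{\design\vbeta-\vf}_n^2$. For this $\gamma,\tau$ one has $C_{1/2,1/4}(s,\lambda,\delta_0)=\tfrac{49}{16}\bigl(\tfrac{\log(1/\delta_0)}{s\log(1/\delta(\lambda))}\vee\tfrac{1}{\theta^2(s,7)}\bigr)$, and by \eqref{eq:choice-explicit-delta0} this maximum equals its first term whenever $\delta_0\le\delta_0^*$. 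Combining the probability estimate of Theorem~\ref{thm:main-event} with the deterministic inequality \eqref{eq:soi-lasso1} of Theorem~\ref{thm:lasso-main-event} then gives, for every $\delta_0\in(0,\delta_0^*]$, the bound $\PP\bigl(W>\tfrac{49\lambda^2}{16}\tfrac{\log(1/\delta_0)}{\log(1/\delta(\lambda))}\bigr)\le\delta_0/2$ (each value of $\delta_0$ referring to its own event \eqref{eq:main-event}). Reparametrising by $t=\tfrac{49\lambda^2}{16}\tfrac{\log(1/\delta_0)}{\log(1/\delta(\lambda))}$ and using $\log(1/\delta_0^*)=\tfrac{s}{\theta^2(s,7)}\log(1/\delta(\lambda))$, this is precisely the sub-exponential tail bound $\PP(W>t)\le\tfrac12\exp\bigl(-\tfrac{16\log(1/\delta(\lambda))}{49\lambda^2}t\bigr)$, valid for all $t\ge t^*\triangleq\tfrac{49\lambda^2 s}{16\theta^2(s,7)}$.

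Since $W\ge-\norm{\design\vbeta-\vf}_n^2$ is bounded from below, $\E[W]\le\E[W_+]=\int_0^\infty\PP(W>t)\,dt$; bounding $\PP(W>t)\le 1$ on $[0,t^*]$ and using the above tail beyond $t^*$ yields $\E[W_+]\le t^*+\tfrac12\int_{t^*}^\infty\exp\bigl(-\tfrac{16\log(1/\delta(\lambda))}{49\lambda^2}t\bigr)dt=t^*+\tfrac{49\lambda^2}{32\log(1/\delta(\lambda))}(\delta(\lambda))^{s/\theta^2(s,7)}$. Now $(\delta(\lambda))^{s/\theta^2(s,7)}=\delta_0^*\le1$, and \eqref{eq:tuning-lambda} together with the elementary inequality $s\log(2ep/s)\ge\log(2ep)$ (the function $s\mapsto s\log(2ep/s)$ has derivative $\log(2p/s)>0$ on $[1,p]$ and takes the value $\log(2ep)$ at $s=1$) gives $\log(1/\delta(\lambda))\ge\log(2ep/s)\ge\log(2ep)/s$, whence $\E[W_+]\le\tfrac{49\lambda^2 s}{16\theta^2(s,7)}+\tfrac{49\lambda^2 s}{32\log(2ep)}$. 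Adding $\norm{\design\vbeta-\vf}_n^2$ to both sides yields \eqref{eq:soi-expectation-lasso}.

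For \eqref{eq:esimtation-ell2-expectation-lasso} I would take $\vf=\design\vbeta^*$ with $\norma{\vbeta^*}_0\le s$ and run the same argument starting from the simplified estimation bound \eqref{eq:lasso_ell_q-bis}: with $V=\norma{\hbeta-\vbeta^*}_q$ the reparametrisation produces $\PP(V>r)\le\tfrac12\exp(-cr)$ for all $r\ge r^*\triangleq\tfrac{49\lambda s^{1/q}}{8\theta^2(s,7)}$, where $c=\tfrac{8s^{1-1/q}\log(1/\delta(\lambda))}{49\lambda}$ and $cr^*=\tfrac{s\log(1/\delta(\lambda))}{\theta^2(s,7)}$. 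Then $\E[V^q]=\int_0^\infty qr^{q-1}\PP(V>r)\,dr\le(r^*)^q+\tfrac{q}{2}\int_{r^*}^\infty r^{q-1}e^{-cr}\,dr=(r^*)^q+\tfrac{q}{2c^q}\,\Gamma(q,cr^*)$, where $(r^*)^q=\tfrac{49^q\lambda^q s}{8^q\theta^{2q}(s,7)}$ is already the first term of \eqref{eq:esimtation-ell2-expectation-lasso} and $\Gamma(q,\cdot)$ denotes the upper incomplete Gamma function. Since $cr^*\ge s\log(2ep/s)\ge\log(2ep)\ge q-1$ for $q\in[1,2]$, the integrand $v^{q-1}e^{-v}$ is decreasing past $v=cr^*$, so $\Gamma(q,cr^*)\le 2(cr^*)^{q-1}e^{-cr^*}$; inserting $c$ and $cr^*$ and using once more $\log(1/\delta(\lambda))\ge\log(2ep/s)$ and $s\log(2ep/s)\ge\log(2ep)$ bounds the second term by $\tfrac{49^q\lambda^q s}{8^q(\log(2ep))^q}$, giving \eqref{eq:esimtation-ell2-expectation-lasso}.

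\textbf{Where I expect the work to be.} All the randomness is already handled by Theorems~\ref{thm:main-event}--\ref{thm:lasso-main-event}, so the bulk of what remains is bookkeeping. The one genuinely delicate step is estimating the incomplete-Gamma tail in the $\ell_q$ part with constants sharp enough --- and uniformly over $q\in[1,2]$ and $1\le s\le p$ --- to recover exactly the announced term $\tfrac{49^q\lambda^q s}{8^q(\log(2ep))^q}$; this rests on the easy but essential inequalities $s\log(2ep/s)\ge\log(2ep)$ and $cr^*\ge s\log(2ep/s)$, and a little care is needed to see that the exponentially small factor $e^{-cr^*}$ really does absorb all the slack.
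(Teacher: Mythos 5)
Your proposal is correct and follows essentially the same route as the paper: both bounds are obtained by integrating the family of tail estimates that Theorems~\ref{thm:main-event} and~\ref{thm:lasso-main-event} supply for every $\delta_0\in(0,\delta_0^*]$, with $\delta_0^*$ from \eqref{eq:choice-explicit-delta0} chosen exactly so that the maximum in $C_{\gamma,\tau}$ is attained by its first term, and then converting the leftover via $s\log(2ep/s)\ge\log(2ep)$. The only divergence is in the $\ell_q$ tail integral, where the paper works with the normalized variable $Z_q$ and crudely bounds $\int_T^\infty \tfrac{q}{2}t^{q-1}e^{-t}\,dt\le\tfrac12\Gamma(q+1)\le 1$, thereby avoiding your incomplete-Gamma estimate altogether; your sharper bound $\Gamma(q,x)\le 2x^{q-1}e^{-x}$ for $x\ge q-1$ is true and does absorb the $\theta^{-2(q-1)}$ factor as you anticipate, but it should be justified by integration by parts (monotonicity of the integrand alone does not bound an integral over an infinite interval by twice its initial value).
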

Corollary \ref{cor:lasso-E} is proved in Section \ref{sec:proof-lasso}.

\vspace{-2mm}

\begin{remark}
    The smallest values of $\lambda$, for which Theorem \ref{thm:lasso-main-event} and Corollaries \ref{cor:lasso-deviation} and \ref{cor:lasso-E} hold is given in \eqref{eq:tuning-lambda}; in particular it depends on $s$.
    For this choice of $\lambda$, the prediction risk and the $\ell_2$ risk of the Lasso estimator attain the non-asymptotic minimax optimal rate $(s/n)\log (p/s)$.
    However, the knowledge of the sparsity index $s$ is needed to achieve this, which raises a problem of adaptation to sparsity~$s$.
    In Section \ref{sec:lepski}, we propose a data-driven Lasso estimator, independent of $s$, solving this adaptation problem, for which we prove essentially the same results as above.
    The argument there uses Corollary \ref{cor:lasso-deviation} as a building block.
\end{remark}

\vspace{-5mm}

\begin{remark}
    Since the assumptions on tuning parameter $\lambda$ in Theorem \ref{thm:lasso-main-event} and Corollaries \ref{cor:lasso-deviation} and \ref{cor:lasso-E}
    are given by inequalities, the case of $\lambda$ defined with $\log(2ep)$ instead of $\log(2ep/s)$ is also covered.
    With such a choice of $\lambda$, the estimators do not depend on $s$ and the results take the same form as in the standard Lasso framework, cf. \cite{MR2533469}, in which  the prediction risk and the $\ell_2$ risk achieve the suboptimal rate $(s\log p)/n$.
    However, even in this case,  Theorem \ref{thm:lasso-main-event} brings in some novelty.
    Indeed, to our knowledge, bounds in expectation, cf. Corollary \ref{cor:lasso-E},
    or in probability with arbitrary $\delta_0\in(0,1)$, cf.  Theorem \ref{thm:lasso-main-event}, were not available.
    The previous work provided only bounds in probability for fixed $\delta_0$ proportional to $1/p^c$ for an absolute constant $c>0$, in the spirit of \eqref{oi-previous}. Such bounds do not allow for control of the moments of the estimation and prediction errors without imposing extra assumptions.
    To our understanding, there was no way to fix this problem within the old proof techniques.
    On the contrary, bounds for the moments of any order can be readily derived from  Theorem \ref{thm:lasso-main-event}.
\end{remark}

\vspace{-6mm}

{\begin{remark}
In this section, the variance $\sigma$ was supposed to be known. The case of unknown $\sigma$ can be treated in a standard way as described, for example, in \cite{giraud2014introduction}. Namely, we replace $\sigma$ in \eqref{eq:tuning-lambda} by a suitable statistic $\hat \sigma$. For example, it can be shown that under the $RE$ condition, the scaled Lasso estimator $\hat \sigma^S$ is such that  $\sigma/2\le \hat \sigma^S\le 2\sigma$ with high probability provided that $s\le cn$ for some constant $c>0$, cf.  \cite[Sections 5.4 and 5.6.2]{giraud2014introduction}. Then, replacing $\sigma$ by  $\hat \sigma\triangleq 2\hat \sigma^S$ in the expression for $\lambda$, cf. \eqref{eq:tuning-lambda}, we obtain that under the same mild conditions, Corollary \ref{cor:lasso-deviation} remains valid with this choice of $\lambda$ independent of $\sigma$, up to a change in numerical constants. This remark also applies to upper bounds in probability obtained in the next sections.
\end{remark}
}

\vspace{-3mm}


\section{Aggregated Lasso estimator and adaptation to sparsity}
\label{sec:lepski}

In this section, we assume that $\vf=\design\vbeta^*$, so that we have a linear regression model
\begin{equation}\label{model0}
\vy = \design\vbeta^* + \vxi.
\end{equation}
We also assume that $\vbeta^*\in B_0(s)=\{\vbeta^*\in\R^p: \norma{\vbeta^*}_0 \le s\}$, where $s\le s_*$.
Here, $s_*\in [1,p/2]$ is a given integer. Our aim is to construct an adaptive to $s$ estimator $ \boldsymbol{\tilde\beta}$ whose prediction risk attains the optimal rate $(s/n)\log(p/s)$ simultaneously on the classes $B_0(s), 1\le s\le s_*$.
This will be done by aggregating a small number of Lasso estimators using a Lepski type procedure. The resulting adaptive estimator $ \boldsymbol{\tilde\beta}$ is computed in polynomial time and its computational complexity exceeds  that of the Lasso only by a $\log_2 p$ factor. Furthermore, we propose an estimator $\hat s$ such that the bound $\hat s \le s$ holds with high probability without the beta-min condition and without any strong assumptions on the matrix $\design$ such as the irrepresentability condition.

We denote by
$\hbeta_s$ the Lasso estimator with tuning parameter
\begin{equation*}
    \lambda(s) = {2(4+\sqrt 2)} \sigma \sqrt{\frac{\log(2ep/s)}{n}},
\end{equation*}
and we set for brevity $\theta_* = \theta(2s_*,7)$. {We will assume that $\theta_*>0$. Then, $\theta^2(s, 7)\ge \theta_*>0$, $s=1, \dots,2s_*$.}  It follows from Corollary~\ref{cor:lasso-deviation} that for any $s=1, \dots,2s_*$
\begin{align}\label{from_corollary}
\sup_{\vbeta^*\in B_0(s)}\Pro_{\vbeta^*} \Bigg(
    \norm{\design(\hbeta_s - \vbeta^*)}_n
    &\ge C_0 \sigma \sqrt{\frac{s\log({2}ep/s)}{n}}
\Bigg) \le {\frac 1 2 } \Big(\frac{s}{2ep}\Big)^{\frac{s}{\theta^2(s, 7)}}\leq \left(\frac s p\right)^s
\end{align}
 where
 \begin{equation}\label{C0}
     C_0 = {7(4+\sqrt 2)/(2\theta_*)},
 \end{equation}
and $\Pro_{\vbeta^*}$ is the probability measure associated to the model \eqref{model0}.  Furthermore, since $\lambda(s) \ge \lambda(2s)$ we also have for any $s=1, \dots,s_*$
  \begin{align}\label{from_corollary_times2}
\sup_{\vbeta^*\in B_0(2s)}\Pro_{\vbeta^*} \Bigg(
                        \norm{\design(\hbeta_s - \vbeta^*)}_n
                        &\ge \sqrt{2}C_0 \sigma \sqrt{\frac{s\log({2}ep/s)}{n}}
                                           \Bigg) \le \left(\frac {2s} p\right)^{2s}.
\end{align}
Set $b_j=2^{j-1}, \ j\in \mathbb{N}$. In what follows, we assume w.l.o.g. that $s_*\ge 2$ since the problem of adaptation does not arise for $s_*=1$. Then,  the integer $M\triangleq \max\{m\in \mathbb{N}: b_m\le s_*\}$ satisfies $M\ge 2$. Note also that $M\le \log_2(p)$. We now construct a data-driven selector from the set of estimators $\{\hbeta_{b_m}, m=2,\dots, M\}$. {We select the index $m$ as follows:
\begin{equation}\label{eq:selector}
\hat m \triangleq \min\left\{m\in \{2,\dots, M\}: \, d(\hbeta_{b_k}, \hbeta_{b_{k-1}}) \le 2w(b_k) \ \text{for all} \ k\ge m\right\}
\end{equation}
with the convention that $\hat m=M$ if the set in the above display is empty. Here, $d(\vbeta, \vbeta')=\norm{\design(\vbeta - \vbeta')}_n$ for all $\vbeta, \vbeta' \in \R^p$, and $
    w(b)=C_0\sigma\sqrt{\frac{b\log(2ep/b)}{n}}, \ \forall b\in[1,p].
    $
    }
Next, we define adaptive estimators of $s$ and $\vbeta^*$ as follows:
\begin{equation}\label{eq:selector2}
 \hat{s}=b_{\hat m-1}= 2^{\hat m-1},\qquad\text{and} \qquad
 \boldsymbol{\tilde\beta}= \hbeta_{2\hat s}.
\end{equation}
\vspace{-8mm}
\begin{theorem}\label{th:lepski}
Let $s_*\in \{2,\dots,p\}$ be such that $\theta_*>0$ and $s_*\le p/(2e)$. Then there exists an absolute constant $C_1>0$ such that, for $C_0$ given in \eqref{C0} and all $s=1, \dots,s_*$,
\begin{align}\label{th:lepski:eq1}
 \sup_{\vbeta^*\in B_0(s)}\Pro_{\vbeta^*} \Bigg(
    \norm{\design(\boldsymbol{\tilde\beta} - \vbeta^*)}_n
    &\ge C_1 C_0\sigma \sqrt{\frac{s\log({2}ep/s)}{n}}
    \Bigg) \le \big(2(\log_2(p))^2+1) \left(\frac{2s}{p}\right)^{2s}
\end{align}
and
\begin{equation}\label{th:lepski:s}
\sup_{\vbeta^*\in B_0(s)}\Pro_{\vbeta^*} (\hat{s} \le s)\ge 1-2(\log_2(p))^2 \left(\frac{2s}{p}\right)^{2s}.
\end{equation}
\end{theorem}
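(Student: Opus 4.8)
First, I would work on a favorable event $\Omega$ on which all the ``ideal'' bounds \eqref{from_corollary} and \eqref{from_corollary_times2} hold simultaneously for every dyadic index $b_m$, $m=2,\dots,M$. A union bound over the at most $M\le \log_2(p)$ indices, using the tail $(2s/p)^{2s}$-type estimates and monotonicity of $s\mapsto (2s/p)^{2s}$ on the relevant range (together with $s_*\le p/(2e)$ to keep these quantities genuinely small), gives $\Pro_{\vbeta^*}(\Omega^c)\le 2(\log_2(p))^2 (2s/p)^{2s}$ for $\vbeta^*\in B_0(s)$; the extra logarithmic factor comes from summing the geometric-type series of the bounds at all scales $b_m\ge s$. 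Everything afterwards is deterministic on $\Omega$.

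**Second, I would run the standard Lepski argument on $\Omega$.** Fix $\vbeta^*\in B_0(s)$ and let $m^*$ be the smallest dyadic index with $b_{m^*}\ge s$, so $b_{m^*}\le 2s$ and $w(b_{m^*})\asymp C_0\sigma\sqrt{(s/n)\log(2ep/s)}$. The key deterministic claim is that on $\Omega$ the selector satisfies $\hat m\le m^*$: indeed, for every $k\ge m^*$ we have $b_k\ge s\ge \norma{\vbeta^*}_0$, hence both $\hbeta_{b_k}$ and $\hbeta_{b_{k-1}}$ satisfy the prediction bound around $\vbeta^*$ at their respective scales (using \eqref{from_corollary} for the larger index $b_k$ and \eqref{from_corollary_times2}-type control for $b_{k-1}$ since $b_k\le 2b_{k-1}$), so by the triangle inequality $d(\hbeta_{b_k},\hbeta_{b_{k-1}})\le \norm{\design(\hbeta_{b_k}-\vbeta^*)}_n+\norm{\design(\hbeta_{b_{k-1}}-\vbeta^*)}_n\le 2w(b_k)$, which is exactly the admissibility test at level $k$. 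Therefore $m^*$ belongs to the set in \eqref{eq:selector}, forcing $\hat m\le m^*$, and consequently $\hat s=2^{\hat m-1}\le 2^{m^*-1}=b_{m^*-1}<s$ (or $\le s$), giving \eqref{th:lepski:s} directly on $\Omega$.

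**For the risk bound \eqref{th:lepski:eq1}**, I would combine $\hat m\le m^*$ with the defining property of $\hat m$. Since $\boldsymbol{\tilde\beta}=\hbeta_{2\hat s}=\hbeta_{b_{\hat m+1}}$, I write a telescoping decomposition
\begin{equation*}
\norm{\design(\boldsymbol{\tilde\beta}-\vbeta^*)}_n
\le \norm{\design(\hbeta_{b_{m^*+1}}-\vbeta^*)}_n
+ \sum_{k=\hat m+1}^{m^*+1} d(\hbeta_{b_k},\hbeta_{b_{k-1}}),
\end{equation*}
where the first term is controlled by \eqref{from_corollary}–type bounds since $b_{m^*+1}\le 4s$, and each difference in the sum is at most $2w(b_k)$ because $k\ge \hat m+1> \hat m$ places $k$ in the admissible range from \eqref{eq:selector}. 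Since the $w(b_k)$ for $k\le m^*+1$ form a geometrically increasing sequence dominated (up to an absolute constant) by $w(b_{m^*+1})\asymp C_0\sigma\sqrt{(s/n)\log(2ep/s)}$, the whole sum is $\le C_1' C_0\sigma\sqrt{(s/n)\log(2ep/s)}$ for an absolute constant $C_1'$, and \eqref{th:lepski:eq1} follows on $\Omega$ with $C_1$ absolute; off $\Omega$ the probability bound is the one established in the first stage.

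**The main obstacle** is the bookkeeping of constants and index ranges: one must be careful that $\hat m+1\le M$ (so that $\hbeta_{2\hat s}$ is well-defined — this uses $s_*\ge 2$ hence $M\ge 2$ and the fact that $\hat m\le m^*\le M$, with the boundary case $\hat m=M$ handled by the convention), that the scale $2\hat s$ never exceeds $2s_*$ so that $\theta^2(2\hat s,7)\ge\theta_*$ keeps $C_0$ meaningful, and that the monotone tail estimates genuinely decrease in the dyadic scale on the range $[s,s_*]$, which is where the hypothesis $s_*\le p/(2e)$ is used. The Lepski mechanism itself is routine once the favorable event is set up correctly; the delicate point is ensuring every invocation of Corollary~\ref{cor:lasso-deviation} is at a scale $b_m$ that upper-bounds the true sparsity, which is precisely what $\hat m\le m^*$ guarantees for the terms that matter.
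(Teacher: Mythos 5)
Your proposal follows essentially the same route as the paper's proof: a union bound over the $O(\log_2 p)$ dyadic scales at or above the true sparsity to build the favorable event, the deterministic claim that the selector does not overshoot the first dyadic index $\ge s$, and a telescoping/geometric-series bound for the risk of $\hbeta_{2\hat s}$. The only delicate point --- which you correctly flag as ``bookkeeping'' and which the paper's own write-up also glosses over --- is the admissibility test at the boundary scale $k=m^*$, where $\hbeta_{b_{m^*-1}}$ lives at a scale below $s$ so that only the $\sqrt 2$-inflated bound \eqref{from_corollary_times2} is available; the triangle inequality then gives $w(b_{m^*})+\sqrt 2\, w(b_{m^*-1})$, which slightly exceeds the threshold $2w(b_{m^*})$, a constant-level issue rather than a conceptual gap.
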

%


The proof of this theorem  is given in Appendix \ref{sec:proof-lepski}. {It uses only the properties \eqref{from_corollary} and \eqref{from_corollary_times2} of the family of estimators $\{\hbeta_{s}\}$.} Theorem~\ref{th:lepski} shows that the prediction error of $\boldsymbol{\tilde\beta}= \hbeta_{2\hat s}$ achieves the optimal rate of order $(s/n) \log(p/s)$.
Without \eqref{th:lepski:eq1}, the fact that estimator $\hat s$ satisfies \eqref{th:lepski:s} is not interesting.
Indeed, the dummy estimator $\tilde s = 0$ satisfies $\mathbb P(\tilde s \le s)  = 1$.
The estimator $\hat s$ is of interest because of the conjunction of \eqref{th:lepski:eq1} and \eqref{th:lepski:s};
not only it satisfies $\hat s\le s$
with high probability (cf. \eqref{th:lepski:s}) but also
 the Lasso estimator with the tuning parameter
$\lambda = {2(4+\sqrt 2)} \sigma \sqrt{\log(ep/\hat s)/n}$
achieves the optimal rate (cf. \eqref{th:lepski:eq1}).
Theorem \ref{th:lepski} shows that
this choice of the tuning parameter improves upon the prediction
bounds  for the Lasso estimator \cite{MR2533469} with the universal tuning parameter of order
$\sigma\sqrt{(\log p)/n}$.

{A procedure of the same type} is adaptive to the sparsity when measuring the accuracy by the $\ell_q$ estimation error for $1\leq q\leq2$. In this case, the risk bound is proved in the same way as in Theorem~\ref{th:lepski} due to the following observations.
First, it follows from Corollary~\ref{cor:lasso-deviation}  that for all $s=1, \dots,2s_*$, {$1\le q\le 2$,}
\begin{align}\label{from_corollary_times3}
\sup_{\vbeta^*\in B_0(s)}\Pro_{\vbeta^*} \Bigg(
                        \norma{\hbeta_s - \vbeta^*}_q
                        &\ge {C_0'} \sigma s^{1/q} \sqrt{\frac{\log({2}ep/s)}{n}}
                                           \Bigg) \le \left(\frac s p\right)^s,
\end{align}
 where {$C_0'=49(4+\sqrt 2)/(4\theta_*)$}.
Second, analogously to \eqref{from_corollary_times2}, we have for all $s=1, \dots,s_*$, {$1\le q\le 2$,}
  \begin{align}\label{from_corollary_times4}
\sup_{\vbeta^*\in B_0(2s)}\Pro_{\vbeta^*} \Bigg(
                        \norma{\hbeta_s - \vbeta^*}_q
                        &\ge {2}C_0' \sigma s^{1/q} \sqrt{\frac{\log({2}ep/s)}{n}}
                                           \Bigg) \le \left(\frac {2s} p\right)^{2s}.
\end{align}
\begin{theorem}\label{th:lepski-bis}
Let $s_*\in \{2,\dots,p\}$ be such that $\theta_*>0$ and $s_*\le p/(2e)$. Let $1\leq q\leq2$ {and let $\hat m$ be defined by \eqref{eq:selector} with $d(\vbeta, \vbeta')= \norma{\vbeta - \vbeta'}_q$ for all $\vbeta, \vbeta' \in \R^p$, and $
    w(b)=C_0\sigma b^{1/q}\sqrt{\frac{\log(2ep/b)}{n}}, \ \forall b\in[1,p],
    $
    where  {$C_0=49(4+\sqrt 2)/(4\theta_*)$}. Let
$\boldsymbol{\tilde\beta}$ be defined as in \eqref{eq:selector2}.
} Then, there exists an absolute constant $C_1>0$ such that, for all $s=1, \dots,s_*$,
\begin{equation*}
 \sup_{\vbeta^*\in B_0(s)}\Pro_{\vbeta^*} \Bigg(
    \norma{\boldsymbol{\tilde\beta} - \vbeta^*}_q
    \ge C_1 C_0\sigma s^{1/q} \sqrt{\frac{\log({2}ep/s)}{n}}
    \Bigg) \le \big(2 (\log_2(p))^2+1\big)\Big(\frac{2s}{p}\Big)^{2s}.
\end{equation*}
\end{theorem}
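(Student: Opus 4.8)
The plan is to repeat, essentially word for word, the Lepski-type argument that proves Theorem~\ref{th:lepski}: as noted just before the statement, that argument uses only the bounds \eqref{from_corollary} and \eqref{from_corollary_times2} on the family $\{\hbeta_{b_m}\}$, together with the fact that the discrepancy $d$ appearing in the selector \eqref{eq:selector} obeys the triangle inequality. Both inputs survive the replacement of $\norm{\design(\cdot)}_n$ by $\norma{\cdot}_q$: for $q\ge 1$ the map $\vbeta\mapsto\norma{\vbeta}_q$ is a norm, so $d(\vbeta,\vbeta')=\norma{\vbeta-\vbeta'}_q$ satisfies the triangle inequality, and \eqref{from_corollary_times3}--\eqref{from_corollary_times4} are the exact $\ell_q$-counterparts of \eqref{from_corollary}--\eqref{from_corollary_times2}. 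Finally, the hypotheses fix $C_0=49(4+\sqrt 2)/(4\theta_*)$, which is exactly the constant in \eqref{from_corollary_times3}, so that $w(b)=C_0\sigma b^{1/q}\sqrt{\log(2ep/b)/n}$ is again the deviation level of $\hbeta_b$ at sparsity $b$. The proof is therefore a transcription; the remaining paragraphs recall the steps.

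Fix $s\le s_*$ and $\vbeta^*\in B_0(s)$, and let $m_\circ$ be the oracle index on the dyadic grid (the smallest $m\in\{2,\dots,M\}$ with $b_m$ of order $s$, so that $\vbeta^*\in B_0(b_m)$ for $m\ge m_\circ$ and $\vbeta^*\in B_0(2b_m)$ for $m=m_\circ-1$). Define $\mathcal A$ to be the event on which $\norma{\hbeta_{b_m}-\vbeta^*}_q\le w(b_m)$ for $m_\circ\le m\le M$ and $\norma{\hbeta_{b_{m_\circ-1}}-\vbeta^*}_q\le 2w(b_{m_\circ-1})$, the former via \eqref{from_corollary_times3} and the latter via \eqref{from_corollary_times4}; a union bound over the $O((\log_2 p)^2)$ relevant indices — the same count as in the proof of Theorem~\ref{th:lepski} — gives $\Pro_{\vbeta^*}(\mathcal A^c)\le\big(2(\log_2 p)^2+1\big)(2s/p)^{2s}$. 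On $\mathcal A$ one carries out the two standard Lepski steps. First, using the triangle inequality and the monotonicity of $b\mapsto w(b)$ on $[1,2s_*]$ (the same elementary one-variable fact used for $q=2$, legitimate because $s_*\le p/(2e)$), one checks that $d(\hbeta_{b_k},\hbeta_{b_{k-1}})\le 2w(b_k)$ for all grid indices $k\ge m_\circ$, so the stopping rule \eqref{eq:selector} is met at $m_\circ$ and hence $\hat m\le m_\circ$ up to an additive constant. Second, the stopping rule then gives $d(\hbeta_{b_k},\hbeta_{b_{k-1}})\le 2w(b_k)$ for $\hat m<k\le m_\circ$, and telescoping yields $\norma{\hbeta_{b_{\hat m}}-\hbeta_{b_{m_\circ}}}_q\le 2\sum_{k\le m_\circ}w(b_k)\le Cw(b_{m_\circ})$, the last bound because $w(b_k)$ grows like the geometric sequence $2^{k/q}$ (ratio $2^{1/q}\ge\sqrt 2$) up to the slowly varying $\sqrt{\log}$ factor.

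Since $\boldsymbol{\tilde\beta}=\hbeta_{2\hat s}=\hbeta_{b_{\hat m}}$ by \eqref{eq:selector2} and $\norma{\hbeta_{b_{m_\circ}}-\vbeta^*}_q\le w(b_{m_\circ})$ on $\mathcal A$, the triangle inequality gives $\norma{\boldsymbol{\tilde\beta}-\vbeta^*}_q\le (1+C)w(b_{m_\circ})\le C_1 w(s)=C_1C_0\sigma s^{1/q}\sqrt{\log(2ep/s)/n}$, using $w(b_{m_\circ})\le 2^{1/q}w(s)$; this is the assertion. The main thing to be careful about — and it is no different from the proof of Theorem~\ref{th:lepski} — is the constant bookkeeping: one must keep the factor-of-two loss incurred when invoking \eqref{from_corollary_times4} compatible with the threshold $2w(b_k)$ built into \eqref{eq:selector}, which is why \eqref{from_corollary_times3} is used for the ``upper'' estimators and \eqref{from_corollary_times4} only at the single boundary index $m_\circ-1$, and one must handle the grid indices near $M$ so that $m_\circ$ stays in the range where \eqref{from_corollary_times3}--\eqref{from_corollary_times4} apply (again where $s_*\le p/(2e)$ enters) and the geometric series $\sum_{k\le m_\circ}w(b_k)$ remains dominated by $w(b_{m_\circ})$ up to an absolute factor. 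No new probabilistic estimate beyond \eqref{from_corollary_times3}--\eqref{from_corollary_times4} is needed.
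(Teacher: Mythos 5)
Your proposal is correct and follows essentially the same route as the paper: the authors' own proof of this theorem consists precisely of the observation that \eqref{from_corollary_times3}--\eqref{from_corollary_times4} play the roles of \eqref{from_corollary}--\eqref{from_corollary_times2}, that $w(b)=C_0\sigma b^{1/q}\sqrt{\log(2ep/b)/n}$ is increasing, and that the geometric-series bound \eqref{th:lepski:eq3} survives the replacement of $\sqrt{b\log(2ep/b)}$ by $b^{1/q}\sqrt{\log(2ep/b)}$, after which the proof of Theorem~\ref{th:lepski} is repeated verbatim with $d=\norma{\cdot}_q$. Your packaging via a single good event $\mathcal A$ rather than the paper's split into $\{\hat m\le m_0\}$ and $\{\hat m\ge m_0+1\}$ is only a cosmetic difference, and you correctly flag the one point that needs care (the constant bookkeeping at the boundary index where \eqref{from_corollary_times4} is invoked), which is absorbed into the absolute constant $C_1$ exactly as in the paper.
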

The proof of this theorem is given in Appendix \ref{sec:proof-lepski}. Due to \eqref{from_corollary_times3}  and  \eqref{from_corollary_times4}, it  is quite analogous to the proof of Theorem~\ref{th:lepski}. 

\section{Optimal rates for the Slope estimator}
\label{sec:slope}

In this section, we study the Slope estimator with weights $\lambda_j$ given in  \eqref{eq:recommendation-weights-slope}.
We will use the following assumption on the design matrix $\design$
that we call the \emph{Weighted Restricted Eigenvalue} condition,
or shortly the $WRE$ condition. Let $c_0>0$, $s\in\{1,\dots,p\}$ be constants, and let
$\vlambda = (\lambda_1,\dots,\lambda_p) \in \R^p$
be a vector of weights not all equal to 0 such that
$\lambda_1\ge \lambda_2\ge\dots\ge \lambda_p\ge0$.

\begin{condition}[$WRE(s,c_0)$]
    \label{assumption:slope-re}
    The design matrix $\design$ satisfies $\norm{\design\ve_j}_n \le 1$ for all $j=1,\dots,p$
    and
    \begin{equation*}
        \vartheta(s,c_0) \triangleq \min_{\vdelta\in\cC_{WRE}(s,c_0):\vdelta\ne\vzero} \frac{\norm{\design\vdelta}_n}{\norma{\vdelta}_2}
        > 0,
    \end{equation*}
    where
    $
    \cC_{WRE} (s,c_0) \triangleq \left\{
            \vdelta\in\R^p: |\vdelta|_*\le (1+c_0) \norma{\vdelta}_2 \sqrt{\sum_{j=1}^s\lambda_j^2}
    \right\}
    $
    is a cone in $\R^p$.
\end{condition}

This condition is stated for any weights $\lambda_1\ge\dots\ge\lambda_p\ge0$ but we will use it only for $\lambda_j$ given in  \eqref{eq:recommendation-weights-slope} and in that case the cone is equivalently defined as
$$
    \cC_{WRE} (s,c_0) = \left\{
            \vdelta\in\R^p: \sum_{j=1}^p \delta_j^\sharp \sqrt{\log(2p/j)}\le (1+c_0) \norma{\vdelta}_2 \sqrt{\sum_{j=1}^s\log(2p/j)}
    \right\}
.
$$
Let us compare the $WRE$ condition with the $SRE$ condition.
Assume that $\vdelta$ belongs to the cone $\cC_{SRE}(s,c_0)$, that is,
$\norma{\vdelta}_1 \le (1+c_0) \sqrt s \norma{\vdelta}_2$. Then also
$
    \sum_{j=s+1}^p \delta_j^\sharp
    \le
    {(1+c_0)} \sqrt s \norma{\vdelta}_2
$, and we have
\begin{eqnarray*}
    \sum_{j=s+1}^p\delta_j^\sharp \sqrt{\log(2p/j)}
    &\le &
    \sqrt{\log(2p/s)}
    \sum_{j=s+1}^p \delta_j^\sharp
    \le
     {(1+c_0)} \norma{\vdelta}_2 \sqrt{s\log(2p/s)} \\
     &\le&
      {(1+c_0)} \norma{\vdelta}_2 \sqrt{\sum_{j=1}^s \log(2p/j)},
\end{eqnarray*}
where the last inequality follows from \eqref{eq:algebra_stirling}. For the first $s$ components, the Cauchy-Schwarz inequality
yields
\begin{equation*}
    \sum_{j=1}^s\delta_j^\sharp \sqrt{\log(2p/j)}
    \le
    \norma{\vdelta}_2 \sqrt{\sum_{j=1}^s \log(2p/j)}.
\end{equation*}
Combining the last two displays we find that $\vdelta\in \cC_{WRE}(s, {1+c_0})$. Thus,
$\cC_{SRE}(s,c_0)\subseteq \cC_{WRE}(s, {1+c_0})$, {so that
the $WRE(s, 1+c_0)$ condition
implies the $SRE(s,c_0)$ condition.}
A more detailed comparison between these two conditions
as well as examples of random matrices, for which both conditions hold
are given in Section \ref{sec:assumptions}.
We are now ready to state our main result on the Slope estimator.

\begin{thm}
    \label{th:slope}
    Let $s\in\{1,\dots,p\}$, $\gamma\in (0,1)$ and $\tau\in[0,1 - \gamma)$.
    Set
    $c_0 = c_0(\gamma,\tau) = \frac{ 1+ \gamma  + \tau}{1 - \gamma - \tau}$. Let the tuning parameters $\lambda_j$ be     defined by
    \eqref{eq:recommendation-weights-slope} with constant
    \begin{equation}
        {A\ge (4+\sqrt 2)/\gamma}.
        \label{eq:condition-A}
    \end{equation}
    Let $\delta_0\in(0,1)$.
    Then, on the event \eqref{eq:main-event},
    the Slope estimator $\hbeta$ that minimizes \eqref{eq:def-hbeta} with the weights $\lambda_1,\dots,\lambda_p$
    satisfies
    \begin{equation}
        2\tau \norma{\hbeta - \vbeta}_*
        +
        \norm{\design\hbeta - \vf}^2_n
        \le
        \norm{\design\vbeta - \vf}^2_n
        + C_{\gamma,\tau}'(s,\delta_0) \sum_{j=1}^s\lambda_j^2
        \label{eq:soi-slope}
    \end{equation}
    simultaneously for all $\vf\in\R^n$, all $s=1,\dots,p$,
    and all $\vbeta\in\R^p$ such that $\norma{\vbeta}_0 = s$,
    where we set
    \begin{equation*}
        C_{\gamma,\tau}'(s,\delta_0)
        \triangleq
        (1+\gamma + \tau )^2
        \left(\frac{\log(1/\delta_0)}{s\log(2p/s)} \vee \frac1{\vartheta^2(s,c_0(\gamma,\tau))} \right)
        ,
        \qquad
        s=1,\dots,p,
    \end{equation*}
    if $WRE(s,c_0)$ holds, and  $C_{\gamma,\tau}'(s,\delta_0)=\infty$ otherwise.
    Furthermore, if $\vf = \design\vbeta^*$ for some $\vbeta^*\in\R^p$ with $\norma{\vbeta^*}_0 \le s$, then
    on the event \eqref{eq:main-event} we have
    \begin{align}
        2\tau \norma{\hbeta - \vbeta^*}_*
        &\le
        C_{\gamma,\tau}'(s,\delta_0) \sum_{j=1}^s \lambda_j^2
        ,
        \label{eq:estimation-well-specified-slope}
        \\
        \norma{\hbeta - \vbeta^*}_2
        &\le
        \frac{C_{\gamma,0}'(s,\delta_0)}{1+\gamma} \Big(\sum_{j=1}^s \lambda_j^2\Big)^{1/2}
        .
        \label{eq:esimtation-ell2-slope}
    \end{align}
\end{thm}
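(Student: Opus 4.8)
The plan is to follow, \emph{mutatis mutandis}, the scheme of the proof of Theorem~\ref{thm:lasso-main-event}, with the weighted $\ell_1$ norm $\lambda\norma{\cdot}_1$ replaced by the sorted $\ell_1$ norm $\norma{\cdot}_*$; since the randomness has already been removed by Theorem~\ref{thm:main-event}, the whole argument is deterministic on the event \eqref{eq:main-event}. Write $\vu=\hbeta-\vbeta$. First I would use convexity of $\vh\mapsto\norm{\design\vh-\vy}_n^2+2\norma{\vh}_*$, the first-order optimality of $\hbeta$ and the subgradient inequality for $\norma{\cdot}_*$ at $\hbeta$ to obtain, after substituting $\design\hbeta-\vy=(\design\hbeta-\vf)-\vxi$ and expanding by polarization, the sharp basic inequality
\[
  \norm{\design\hbeta-\vf}_n^2+\norm{\design\vu}_n^2\le\norm{\design\vbeta-\vf}_n^2+\tfrac2n\vxi^T\design\vu+2\norma{\vbeta}_*-2\norma{\hbeta}_*,
\]
the bonus term $\norm{\design\vu}_n^2$ being what makes the leading constant in \eqref{eq:soi-slope} equal to $1$. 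On the event \eqref{eq:main-event} I would then bound $\tfrac1n\vxi^T\design\vu\le\max(H(\vu),G(\vu))$ and rewrite both quantities in the ``Slope scale'': since $A\ge(4+\sqrt2)/\gamma$ one has $H(\vu)=\tfrac{4+\sqrt2}{A}\norma{\vu}_*\le\gamma\norma{\vu}_*$ directly from the definitions \eqref{eq:def-norm_*}, \eqref{eq:recommendation-weights-slope}, \eqref{eq:def-H-G}; and by \eqref{eq:algebra_stirling} (so that $\sum_{j=1}^s\lambda_j^2\ge A^2\sigma^2 s\log(2p/s)/n$) one gets $G(\vu)\le\gamma\sqrt{\tfrac{\log(1/\delta_0)}{s\log(2p/s)}}\,\big(\sum_{j=1}^s\lambda_j^2\big)^{1/2}\norm{\design\vu}_n=:\mu\norm{\design\vu}_n$.

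The algebraic core is a decomposability inequality for $\norma{\cdot}_*$ (the SLOPE analogue of the identity $\norma{\vbeta}_1=\norma{\vbeta_S}_1$ used for the Lasso): for any $\vbeta$ with $\norma{\vbeta}_0\le s$ and support $S$, and any $\vu$,
\[
  \norma{\vbeta+\vu}_*\ge\norma{\vbeta}_*+\sum_{j=s+1}^p\lambda_j u_j^\sharp-\sum_{j=1}^s\lambda_j u_j^\sharp,
\]
which I would prove by evaluating the variational formula \eqref{eq:maximal-rearrange} at the permutation listing the coordinates of $S$ first (ordered by $|\vbeta_j|$) and those of $S^c$ afterwards (ordered by $|\vu_j|$), then applying the triangle inequality coordinatewise together with the elementary facts $(\vu_S)_j^\sharp\le u_j^\sharp$ and $(\vu_{S^c})_j^\sharp\ge u_{s+j}^\sharp$. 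Writing $\Sigma_1=\sum_{j=1}^s\lambda_j u_j^\sharp$ and $\Sigma_2=\norma{\vu}_*-\Sigma_1$, this gives $2\norma{\vbeta}_*-2\norma{\hbeta}_*\le2\Sigma_1-2\Sigma_2$. Combining with the basic inequality and the noise bound, and -- exactly as in the Lasso proof -- reserving a fraction $2\tau\norma{\vu}_*$ on the left, I arrive in the case $H(\vu)\ge G(\vu)$ at
\[
  \norm{\design\hbeta-\vf}_n^2+2\tau\norma{\vu}_*+\norm{\design\vu}_n^2+(2-2\gamma-2\tau)\Sigma_2\le\norm{\design\vbeta-\vf}_n^2+(2+2\gamma+2\tau)\Sigma_1,
\]
and in the case $G(\vu)\ge H(\vu)$ at the same inequality with the $2\gamma\norma{\vu}_*$-absorption on the left replaced by an extra $2\mu\norm{\design\vu}_n$ on the right. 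Now I would split according to whether $\Sigma_2>c_0\Sigma_1$ with $c_0=c_0(\gamma,\tau)=\tfrac{1+\gamma+\tau}{1-\gamma-\tau}$. If it holds, the left tail term beats the right head term because $(1-\gamma-\tau)(1+c_0)=2$, and \eqref{eq:soi-slope} follows with zero remainder (in the $G$-case, with remainder $\mu^2\le C'_{\gamma,\tau}(s,\delta_0)\sum_{j\le s}\lambda_j^2$ after completing the square in $\norm{\design\vu}_n$). Otherwise $\norma{\vu}_*\le(1+c_0)\Sigma_1\le(1+c_0)\big(\sum_{j\le s}\lambda_j^2\big)^{1/2}\norma{\vu}_2$ by Cauchy--Schwarz, hence $\vu\in\cC_{WRE}(s,c_0)$; the $WRE(s,c_0)$ condition then gives $\norma{\vu}_2\le\norm{\design\vu}_n/\vartheta(s,c_0)$, so the right head term is at most $(1+\gamma+\tau)\big(\sum_{j\le s}\lambda_j^2\big)^{1/2}\norm{\design\vu}_n/\vartheta$, and $2ab\le a^2+b^2$ with $a=\norm{\design\vu}_n$ cancels the bonus $\norm{\design\vu}_n^2$ to produce \eqref{eq:soi-slope} with $C'_{\gamma,\tau}(s,\delta_0)$.

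Since the event \eqref{eq:main-event} and the entire argument are independent of $s$ (and the bound is trivial where $WRE(s,c_0)$ fails, $C'=\infty$), \eqref{eq:soi-slope} holds simultaneously over all $s$. For the well-specified claims, \eqref{eq:estimation-well-specified-slope} is immediate from \eqref{eq:soi-slope} with $\vbeta=\vbeta^*$ (so $\norm{\design\vbeta^*-\vf}_n=0$) after dropping $\norm{\design\hbeta-\vf}_n^2\ge0$; for \eqref{eq:esimtation-ell2-slope} I would take $\tau=0$, observe that here the zero-remainder branch is impossible since it would force $2\norm{\design\vu}_n^2<0$, hence $\vu\in\cC_{WRE}(s,c_0(\gamma,0))$, and combine $\norm{\design\vu}_n^2\le C'_{\gamma,0}(s,\delta_0)\sum_{j\le s}\lambda_j^2$ with $\norma{\vu}_2\le\norm{\design\vu}_n/\vartheta(s,c_0(\gamma,0))$ and $\vartheta^{-1}\le\sqrt{C'_{\gamma,0}}/(1+\gamma)$, the last bound coming from $C'_{\gamma,0}\ge(1+\gamma)^2/\vartheta^2$. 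I expect the main obstacle to be the decomposability inequality for $\norma{\cdot}_*$: unlike $\norma{\cdot}_1$, the sorted $\ell_1$ norm is not separable over $S$ and $S^c$, so one must choose the permutation in \eqref{eq:maximal-rearrange} carefully and track the interaction between the sorting of $\vbeta$ on its support and the sorting of the tail of $\vu$; everything else is the familiar ``basic inequality $\Rightarrow$ cone $\Rightarrow$ restricted eigenvalue'' route, with the remaining care needed only in tuning the constants so that the $\max(H,G)$ dichotomy of Theorem~\ref{thm:main-event} yields precisely $C'_{\gamma,\tau}(s,\delta_0)$.
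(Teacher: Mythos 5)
Your argument for the oracle inequality \eqref{eq:soi-slope} and for \eqref{eq:estimation-well-specified-slope} is correct and is essentially the paper's proof: your ``decomposability inequality'' is exactly Lemma \ref{lemma:algebra-norm} (before its final Cauchy--Schwarz step), the basic inequality with the bonus term $\norm{\design\vu}_n^2$ is \eqref{eq:almost-sure} with $h=2\norma{\cdot}_*$, and your split on $\Sigma_2\gtrless c_0\Sigma_1$ is equivalent to the paper's split on the sign of $\triangle$, since $(1-\gamma-\tau)c_0=1+\gamma+\tau$. The identity $H(\vu)=\tfrac{4+\sqrt2}{A}\norma{\vu}_*$ and the bound $G(\vu)\le\mu\norm{\design\vu}_n$ are also right.

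There is, however, a genuine gap in your derivation of the $\ell_2$ bound \eqref{eq:esimtation-ell2-slope}. You claim that with $\tau=0$ and $\vf=\design\vbeta^*$ the branch $\Sigma_2>c_0\Sigma_1$ is impossible ``since it would force $2\norm{\design\vu}_n^2<0$,'' and hence that $\vu\in\cC_{WRE}(s,c_0(\gamma,0))$ always. This is true only in the case $H(\vu)\ge G(\vu)$, where the noise is absorbed as $2\gamma(\Sigma_1+\Sigma_2)$. In the case $G(\vu)\ge H(\vu)$ the inequality reads $2\norm{\design\vu}_n^2+2\Sigma_2\le 2\Sigma_1+2\mu\norm{\design\vu}_n$, and $\Sigma_2>c_0\Sigma_1$ only yields $\norm{\design\vu}_n\le\mu$ --- no contradiction. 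In that sub-case $\vu$ need not lie in the $WRE$ cone, so you cannot invoke $\norma{\vu}_2\le\norm{\design\vu}_n/\vartheta$, and no bound on $\norma{\vu}_2$ follows: your case split compares $G$ with $H(\vu)=\tfrac{4+\sqrt2}{A}\norma{\vu}_*$, and a small sorted-$\ell_1$ norm does not control $\norma{\vu}_2$ from above. The paper avoids this by inserting the intermediate quantity $\tilde H(\vu)$ of \eqref{eq:def-tilde-H}, whose head term is $(4+\sqrt2)\tfrac{\sigma}{\sqrt n}\norma{\vu}_2\bigl(\sum_{j=1}^s\log(2p/j)\bigr)^{1/2}$, and splitting on $\tilde H\gtrless G$: the inequality $\tilde H(\vu)\le G(\vu)$ then gives directly $\norma{\vu}_2\le\norm{\design\vu}_n\sqrt{\log(1/\delta_0)/(s\log(2p/s))}$ (equation \eqref{eq:bound-on-G-slope-case1}), with no cone condition needed, and this combined with the prediction bound yields \eqref{eq:esimtation-ell2-slope} in that case. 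You should therefore apply Cauchy--Schwarz to the first $s$ terms of $H$ \emph{before} making the case split, i.e.\ work with $\max(\tilde H,G)$ rather than $\max(H,G)$.
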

The proof of Theorem \ref{th:slope} is given in Section \ref{sec:proof-slope}. It follows the same route as the proof of Theorem \ref{thm:lasso-main-event}.
Since $\lambda_1,\dots,\lambda_p$ satisfy \eqref{eq:recommendation-weights-slope} then by \eqref{eq:algebra_stirling},
for all $s=1,\dots,p$ we have
\begin{equation}\label{equival}
    \frac{A^2 \sigma^2 s\log(2p/s)}{n}
    \le
    \sum_{j=1}^s \lambda_j^2
    \le
    \frac{A^2 \sigma^2 s\log(2ep/s)}{n}
\end{equation}
so that the Slope estimator achieves the optimal rate for the prediction error and the $\ell_2$-estimation error.
The presentation of Theorem \ref{th:slope} is similar to that of Theorem \ref{thm:lasso-main-event} for the Lasso, although
there are some differences that will be highlighted after the following corollaries.
Corollary \ref{cor:slope-deviations} below is an immediate consequence of
Theorems \ref{thm:main-event} and \ref{th:slope}
with $\gamma=1/2$, $\tau = 1/4$ and
$\delta_0 = \left(\frac{s}{2p}\right)^{\frac{s}{\vartheta^2(s,7)}}$
or $\tau=0$ and $\delta_0 = \left(\frac{s}{2p}\right)^{\frac{s}{\vartheta^2(s,3)}}$.

\begin{corollary}
    \label{cor:slope-deviations}
    Let $s\in\{1,\dots,p\}$. Assume that the
    $WRE(s,7)$ condition holds. Let $\hbeta$ be the Slope estimator with tuning parameters
    $\lambda_1,\dots,\lambda_p$  satisfying
    \eqref{eq:recommendation-weights-slope} for $A\ge {2(4+\sqrt 2)}$.
    Then, with probability at least $1 - {\frac 1 2} \left(\frac{s}{2p}\right)^{\frac{s}{\vartheta^2(s,7)}}$,  we have
    \begin{equation*}
                   \frac 1 2 \norma{\hbeta - \vbeta}_*
            +
            \norm{\design\hbeta - \vf}^2_n
            \le
            \norm{\design\vbeta - \vf}^2_n
            + \frac{49 \sum_{j=1}^s\lambda_j^2 }{16 \vartheta^2(s,7)}
          \label{eq:slope-explicit-constants-soi}
    \end{equation*}
    for all $\vbeta\in\R^p$ such that $\norma{\vbeta}_0\le s$, and all $\vf\in\R^n$.
    Furthermore, if $\vf = \design\vbeta^*$ for some $\vbeta^*\in\R^p$ with $\norma{\vbeta^*}_0 \le s$
    then
    \begin{align*}
        \mathbb P\left(
            \norma{\hbeta-\vbeta^*}_2^2
            \le \frac{9 \sum_{j=1}^s\lambda_j^2}{4\vartheta^4(s,3)}
        \right)
        \ge
        1- {\frac 1 2} \left(\frac{s}{2p}\right)^{\frac{s}{\vartheta^2(s,3)}}.
        \label{eq:slope-explicit-constants-estimation-ell2}
    \end{align*}
\end{corollary}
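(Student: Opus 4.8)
The plan is to deduce this corollary directly from Theorems~\ref{thm:main-event} and \ref{th:slope} by specializing the free parameters $\gamma,\tau,\delta_0$ and then simplifying the constants using \eqref{equival}. First, I would fix $\gamma = 1/2$. Then the condition \eqref{eq:condition-A} on the Slope weights reads $A \ge (4+\sqrt 2)/\gamma = 2(4+\sqrt 2)$, which is exactly the hypothesis imposed on $A$ in the corollary, so the weights $\lambda_1,\dots,\lambda_p$ of \eqref{eq:recommendation-weights-slope} are admissible in Theorem~\ref{th:slope}. With $\gamma = 1/2$ I have two natural choices for $\tau$: taking $\tau = 1/4$ gives $c_0(\gamma,\tau) = \frac{1+\gamma+\tau}{1-\gamma-\tau} = \frac{7/4}{1/4} = 7$, so the $WRE(s,7)$ assumption of the corollary is precisely what is needed; taking $\tau = 0$ gives $c_0(\gamma,0) = \frac{3/2}{1/2} = 3$, and since $\cC_{WRE}(s,3)\subseteq\cC_{WRE}(s,7)$ we have $\vartheta(s,3)\ge \vartheta(s,7)>0$, so $WRE(s,3)$ also holds under the corollary's hypothesis.

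Next I would choose $\delta_0$. For the prediction/oracle bound I take $\delta_0 = \big(\tfrac{s}{2p}\big)^{s/\vartheta^2(s,7)}$, which is the value making $C_{1/2,1/4}'(s,\delta_0) = \frac{(1+\gamma+\tau)^2}{\vartheta^2(s,c_0)}$; indeed with this $\delta_0$ one has $\frac{\log(1/\delta_0)}{s\log(2p/s)} = \frac{1}{\vartheta^2(s,7)}$, so the maximum in the definition of $C_{\gamma,\tau}'$ is attained by the second term, and $(1+\gamma+\tau)^2 = (7/4)^2 = 49/16$. Substituting into \eqref{eq:soi-slope} with $2\tau = 1/2$ yields the displayed oracle inequality. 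By Theorem~\ref{thm:main-event} the event \eqref{eq:main-event} has probability at least $1-\delta_0/2 = 1 - \tfrac12\big(\tfrac{s}{2p}\big)^{s/\vartheta^2(s,7)}$, and on that event Theorem~\ref{th:slope} gives the conclusion, which proves the first assertion.

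For the $\ell_2$-estimation bound in the well-specified case I instead take $\tau = 0$ and $\delta_0 = \big(\tfrac{s}{2p}\big)^{s/\vartheta^2(s,3)}$, so that $C_{1/2,0}'(s,\delta_0) = \frac{(1+\gamma)^2}{\vartheta^2(s,3)} = \frac{9/4}{\vartheta^2(s,3)}$. Then \eqref{eq:esimtation-ell2-slope} gives
$\norma{\hbeta-\vbeta^*}_2 \le \frac{C_{1/2,0}'(s,\delta_0)}{1+\gamma}\big(\sum_{j=1}^s\lambda_j^2\big)^{1/2} = \frac{3/2}{\vartheta^2(s,3)}\big(\sum_{j=1}^s\lambda_j^2\big)^{1/2}$, and squaring yields $\norma{\hbeta-\vbeta^*}_2^2 \le \frac{9\sum_{j=1}^s\lambda_j^2}{4\vartheta^4(s,3)}$. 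Again Theorem~\ref{thm:main-event} with this $\delta_0$ gives probability at least $1-\tfrac12\big(\tfrac{s}{2p}\big)^{s/\vartheta^2(s,3)}$. The only mild bookkeeping point — and the closest thing to an obstacle — is to check that these two different specializations of $(\tau,\delta_0)$ are both legitimate simultaneously: this is immediate because the event \eqref{eq:main-event} is independent of $\gamma,\tau,\delta_0$ (as emphasized after Theorem~\ref{thm:lasso-main-event}), so one may apply Theorem~\ref{th:slope} on that event for whichever $(\tau,\delta_0)$ is convenient for each bound, and the stated probabilities follow from Theorem~\ref{thm:main-event} evaluated at the corresponding $\delta_0$. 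Finally, one notes that $\vartheta^2(s,7)\le 1$ (take $\vdelta = \ve_j$), so the probabilities are also bounded below by $1-\tfrac12(s/(2p))^s$ if a sparsity-free statement is preferred.
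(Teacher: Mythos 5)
Your proposal is correct and follows exactly the route the paper intends: the authors derive this corollary as an immediate consequence of Theorems \ref{thm:main-event} and \ref{th:slope} with precisely your parameter choices ($\gamma=1/2$, $\tau=1/4$, $\delta_0=(s/(2p))^{s/\vartheta^2(s,7)}$ for the oracle inequality, and $\tau=0$, $\delta_0=(s/(2p))^{s/\vartheta^2(s,3)}$ for the $\ell_2$ bound). Your constant computations and the observation that the two specializations are simultaneously legitimate because the event \eqref{eq:main-event} does not depend on $\gamma,\tau,\delta_0$ match the paper's reasoning.
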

The fact that Theorems \ref{thm:main-event} and \ref{th:slope} hold for any $\delta_0\in(0,1)$
allows us to integrate the bounds \eqref{eq:soi-slope} and \eqref{eq:esimtation-ell2-slope}
to obtain the following oracle inequalities and bounds on the estimation error in expectation.

\begin{corollary}
    \label{cor:slope-E}
     Let $s\in\{1,\dots,p\}$.
    Assume that the
    $WRE(s,7)$ condition holds.
    Let $\hbeta$ be the Slope estimator with tuning parameters
    $\lambda_1,\dots,\lambda_p$  satisfying
    \eqref{eq:recommendation-weights-slope} for $A\ge { 2(4+\sqrt 2) }$.
      Then,
          \begin{equation*}
        \E \left[ \frac 1 2 \norma{\hbeta - \vbeta}_*
                +
                \norm{\design\hbeta - \vf}^2_n
        \right]
        \le
        \norm{\design\vbeta - \vf}^2_n
        + \frac{49\sum_{j=1}^s\lambda_j^2}{16} \left(
            \frac{1}{\vartheta^2(s,7)}
            + \frac{1}{{2}\log(2p)}
        \right)
    \end{equation*}
    for all $\vbeta\in\R^p$ such that $\norma{\vbeta}_0 \le s$, and all $\vf\in\R^n$.
    Furthermore, if $\vf=\design\vbeta^*$
    for some $\vbeta^*\in\R^p$ with $\norma{\vbeta^*}_0 \le s$,
    then
    \begin{equation*}
        \E \left[ \norma{\hbeta - \vbeta^*}_2^2 \right]
        \le
        \frac{9 \sum_{j=1}^s\lambda_j^2 }{4}
        \left(
            \frac{1}{\vartheta^4(s,3)}
            +
            \frac{{1}}{(\log(2p))^2}
        \right)
        .
    \end{equation*}
\end{corollary}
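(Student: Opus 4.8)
\emph{Proof proposal.}
The plan is to obtain both bounds by integrating over the confidence level $\delta_0$ the high-probability inequalities of Theorem~\ref{th:slope}, exactly as one does for the Lasso in Corollary~\ref{cor:lasso-E}. The key structural point is that neither the Slope estimator $\hbeta$ nor the weights $\lambda_1,\dots,\lambda_p$ depend on $\delta_0$: only the event \eqref{eq:main-event} does, and by Theorem~\ref{thm:main-event} this event has probability at least $1-\delta_0/2$ \emph{for every} $\delta_0\in(0,1)$. I will apply Theorem~\ref{th:slope} with $\gamma=1/2$, $\tau=1/4$ (so $c_0=7$, $(1+\gamma+\tau)^2=49/16$) for the oracle inequality, and with $\gamma=1/2$, $\tau=0$ (so $c_0=3$, $(1+\gamma)^2=9/4$) for the $\ell_2$ bound; if $\vartheta(s,7)=0$, resp.\ $\vartheta(s,3)=0$, there is nothing to prove. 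Write $L\triangleq\sum_{j=1}^s\lambda_j^2$ and note the elementary facts $\vartheta(s,c_0)\le1$ (insert $\vdelta=\ve_j$ in the infimum defining $\vartheta$), $s\log(2p/s)\ge\log(2p)$, and $p\ge\log(2p)$, with $p\ge2$ assumed w.l.o.g.

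For the oracle inequality, fix $\vf$ and $\vbeta$ with $\norma{\vbeta}_0\le s$, set $Z\triangleq\tfrac12\norma{\hbeta-\vbeta}_*+\norm{\design\hbeta-\vf}_n^2-\norm{\design\vbeta-\vf}_n^2$, and use $\E[Z]\le\E[Z_+]=\int_0^\infty\Pro(Z>t)\,dt$. Let $t_0\triangleq\tfrac{49L}{16\,\vartheta^2(s,7)}$. For $t\ge t_0$, choose $\delta_0(t)\in(0,1)$ so that the right-hand side of \eqref{eq:soi-slope} with this $\delta_0$ equals exactly $t$; for $t\ge t_0$ this forces the maximum in $C_{\gamma,\tau}'(s,\delta_0(t))$ to be attained at the logarithmic term, hence $\{Z>t\}$ is contained in the complement of \eqref{eq:main-event} taken with parameter $\delta_0(t)$, so $\Pro(Z>t)\le\tfrac12\delta_0(t)=\tfrac12\exp\!\big(-\tfrac{16\,s\log(2p/s)}{49L}\,t\big)$. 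Bounding $\Pro(Z>t)\le1$ on $[0,t_0)$ and integrating the exponential tail on $[t_0,\infty)$ gives $\E[Z_+]\le t_0+\tfrac{49L}{32\,s\log(2p/s)}\exp\!\big(-s\log(2p/s)/\vartheta^2(s,7)\big)$; since the exponential is $\le1$ and $s\log(2p/s)\ge\log(2p)$, the second term is $\le\tfrac{49L}{32\log(2p)}$, which is the claimed oracle inequality in expectation.

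For the $\ell_2$ bound, take $\vf=\design\vbeta^*$ with $\norma{\vbeta^*}_0\le s$ and put $W\triangleq\norma{\hbeta-\vbeta^*}_2^2$; squaring \eqref{eq:esimtation-ell2-slope} shows that on \eqref{eq:main-event} with parameter $\delta_0$ one has $W\le\tfrac94\big(\tfrac{\log(1/\delta_0)}{s\log(2p/s)}\vee\vartheta^{-2}(s,3)\big)^2L$. The same scheme applies, but now choosing $\delta_0(t)$ so that the bound equals $t$ gives $\log(1/\delta_0(t))\propto\sqrt t$, hence a sub-exponential tail $\Pro(W>t)\le\tfrac12\exp(-\kappa\sqrt t)$ with $\kappa=\tfrac{2\,s\log(2p/s)}{3\sqrt L}$, valid for $t\ge w_0\triangleq\tfrac{9L}{4\,\vartheta^4(s,3)}$. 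The substitution $u=\sqrt t$ turns $\int_{w_0}^\infty\tfrac12 e^{-\kappa\sqrt t}\,dt$ into the elementary integral $\int_{\sqrt{w_0}}^\infty u\,e^{-\kappa u}\,du=\kappa^{-1}e^{-\kappa\sqrt{w_0}}\big(\sqrt{w_0}+\kappa^{-1}\big)$; since $\kappa\sqrt{w_0}=s\log(2p/s)/\vartheta^2(s,3)$, using $\vartheta\le1$, $s\log(2p/s)\ge\log(2p)$, $p\ge\log(2p)$, and the fact that $y\mapsto y\,e^{-cy}$ is decreasing for $y\ge1$ when $c=s\log(2p/s)\ge1$, this integral is $\le\tfrac{9L}{4(\log(2p))^2}$, so $\E[W]\le\E[W_+]\le w_0+\tfrac{9L}{4(\log(2p))^2}=\tfrac{9L}{4}\big(\vartheta^{-4}(s,3)+(\log(2p))^{-2}\big)$, as required.

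The step I expect to be the main obstacle is the $\ell_2$ bound: because \eqref{eq:esimtation-ell2-slope} controls $\norma{\hbeta-\vbeta^*}_2$ rather than its square and $C_{\gamma,0}'$ grows linearly in $\log(1/\delta_0)$, the integrand decays like $e^{-\kappa\sqrt t}$, which forces the change of variable $u=\sqrt t$ and some bookkeeping to collapse the resulting numerical constants to exactly $\vartheta^{-4}(s,3)$ and $(\log(2p))^{-2}$. By contrast, the oracle inequality has a genuine exponential tail and is immediate once the tail estimate is in hand. No other difficulty is anticipated; the role of $\delta_0$ as a free integration variable is precisely what the formulations of Theorems~\ref{thm:main-event} and \ref{th:slope} were designed to permit.
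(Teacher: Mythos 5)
Your proposal is correct and follows essentially the same route as the paper: the paper omits the proof of this corollary, stating that it is deduced from Theorem~\ref{th:slope} exactly as Corollary~\ref{cor:lasso-E} is deduced from Theorem~\ref{thm:lasso-main-event}, namely by exploiting that the estimator is independent of $\delta_0$ and integrating the tail bound $\Pro(Z>t)\le\tfrac12\delta_0(t)$ over $t$. Your substitution $u=\sqrt t$ in the $\ell_2$ part is computationally identical to the paper's $\E[Z_q^q]=\int_0^\infty qt^{q-1}\Pro(Z_q>t)\,dt$ with $q=2$, and the constants come out as claimed.
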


{Since $\hbeta$ does not depend on $s$, the first inequality in Corollary \ref{cor:slope-E} and \eqref{equival} imply a ``balanced'' oracle inequality:
\begin{equation}\label{eq:6.5}
\E \left[
                \norm{\design\hbeta - \vf}^2_n
        \right]
        \le
       \inf_{\vbeta\in \R^p} \left[\norm{\design\vbeta - \vf}^2_n
        +
            C(\norma{\vbeta}_0)\frac{\norma{\vbeta}_0}{n}\log\Big(\frac{2ep}{\norma{\vbeta}_0\vee 1}\Big)       \right],
            \quad \forall \vf\in\R^n,
\end{equation}
where
$$
C(\norma{\vbeta}_0)= \frac{49A^2\sigma^2}{16} \left(
    \frac{1}{\vartheta^2(\norma{\vbeta}_0,7)}+\frac{1}{{2}\log(2p)} \right)
$$
if $\vartheta^2(\norma{\vbeta}_0,7)\ne 0$ and $C(\norma{\vbeta}_0)=\infty$ otherwise.} {This formulation might be of interest in the context of aggregation as explained, for example, in \cite{rigollet2011exponential}.}

Corollaries \ref{cor:slope-deviations} and \ref{cor:slope-E} are the analogs of
Corollaries  \ref{cor:lasso-deviation} and \ref{cor:lasso-E} for the Lasso.
The proof of Corollary \ref{cor:slope-E} is omitted. It is deduced from Theorem \ref{th:slope} exactly
in the same way as Corollary \ref{cor:lasso-E} is deduced from Theorem \ref{thm:lasso-main-event}  in Section \ref{sec:proof-lasso}.

The results in Section \ref{sec:lasso-4} and in the present section
show that both the Lasso estimator with tuning parameter of order $\sigma\sqrt{\log(p/s)/n}$
and the Slope estimator with weights \eqref{eq:recommendation-weights-slope}
achieve the optimal rate $(s/n)\log(p/s)$ for the $\ell_2$-estimation and the prediction error.
We now highlight some differences between these results on Slope and Lasso.

The first difference, in favor of Slope, is that
Slope achieves the optimal rate  adaptively to the unknown sparsity $s$.
This was previously established in \cite{MR3485953}
for random design matrices $\design$ with i.i.d. $\mathcal N(0,1)$ entries
and in \cite{LM_reg_sparse}
for random $\design$ with independent subgaussian isotropically distributed rows.
The results of the present section show that, in reasonable generality, Slope achieves rate optimality for deterministic
design matrices.  Namely, it is enough to check a rather general condition $WRE$, which is only slightly more constraining
than the $RE$ condition commonly used in the context of Lasso.   It is also shown in Section \ref{sec:assumptions} that  the $WRE$ condition holds with high probability for a large class of random design matrices.
This includes design matrices with
i.i.d. anisotropically distributed rows, for example, matrices with i.i.d. rows
distributed as $\mathcal N(\vzero, \Sigma)$ where $\Sigma\in\R^{p\times p}$ is not invertible.

The second difference is that our results for the Lasso are obtained in greater generality than for the Slope.
Indeed, the $SRE$ condition required in Section \ref{sec:lasso-4} for the Lasso
is weaker that the $WRE$ condition required here for the Slope. We refer to
Section \ref{sec:assumptions} for a more detailed comparison of these conditions.
Furthermore, for $1\le q<2$, in Section-\ref{sec:lasso-4} we obtain rate optimal bounds on the $\ell_q$-errors of the Lasso estimator, while for its Slope counterpart we can only control the rate in the $|\cdot|_*$ and $\ell_2$ norms, cf. \eqref{eq:estimation-well-specified-slope} and \eqref{eq:esimtation-ell2-slope} (and all the interpolation norms in between but those are not classical to measure statistical performances). Of course, for the weights \eqref{eq:recommendation-weights-slope}, the trivial relation $|\vbeta|_*\ge C\sigma |\vbeta|_1/\sqrt{n}$ holds, where $C>0$ is a constant. This and \eqref{eq:estimation-well-specified-slope} lead to a bound on the $\ell_1$-error of the Slope estimator,   which is however suboptimal. The same problem arises with the $\ell_q$-norms with $1<q<2$ if the bounds are obtained by interpolation between such a suboptimal bound for the $\ell_1$-error and the $\ell_2$ bound \eqref{eq:esimtation-ell2-slope}.

Finally, note that the aggregation scheme proposed in \Cref{sec:lepski} requires the knowledge of a lower bound  $\theta_*$ on the $SRE$ constants $\theta(\cdot,7)$
in order to adaptively achieve the optimal rate $s\log(p/s)/n$. If $n>cs_*\log(p/s_*)$  for some numerical constant $c>0$, and the regressors are random and satisfy suitable assumptions (see Theorem \ref{thm:design-random-matrices-slope}), one can compute the numerical constant $\theta_*$ that gives the required bound with high probability.  It can be also computed when the matrix $\design$ is deterministic and satisfies the mutual coherence condition. But in general case
it is hard to compute such $\theta_*$.
On the other hand, the Slope estimator adaptively
achieves the optimal rate $s\log(p/s)/n$ without the knowledge
of any $RE$-type constants.


\section{Minimax lower bounds}
\label{sec:lower}

In this section, we provide the minimax lower bounds for the prediction risk and $\ell_q$-estimation risk
on the class $B_0(s)$.
Several papers have addressed this issue for the prediction risk \cite{rigollet2011exponential,lptv2011,raskutti2011,abramovich2010,verzelen2012}, for the $\ell_2$-estimation risk \cite{raskutti2011,verzelen2012,candes_davenport2013}, and for the $\ell_q$-estimation risk with general $q$ \cite{raskutti2011,ye_zhang2010,lptv2011}. We are interested here in non-asymptotic bounds and therefore the results in \cite{raskutti2011,ye_zhang2010} obtained in some asymptotics do not fit in our context. Another issue is that the papers cited above, except for \cite{lptv2011}, deal with lower bounds for the expected squared risk or power risk \cite{raskutti2011,ye_zhang2010} and thus cannot be used to match the upper bounds in probability that are in the focus of our study
in this paper. The only result that can be applied in this context is Theorem 6.1 in \cite{lptv2011}. It gives a non-asymptotic lower bound for general loss functions under the condition that the ratio of minimal and maximal $2s$-sparse eigenvalue of the Gram matrix $\design^T\design/n$ is bounded from below by a constant.  It matches our upper bounds both for the prediction risk and for the $\ell_q$-estimation risk. Note that Theorem 6.1 in \cite{lptv2011} deals with group sparsity and is therefore more general than in our setting.  Thus, we only refer to the case $T=1$ of Theorem 6.1 in \cite{lptv2011} corresponding to ordinary sparsity. Here, we provide an improvement on it, in the sense that for the lower bound in $\ell_q$, we drop the ratio of sparse eigenvalues condition. Thus, in the next theorem the lower bound for the $\ell_q$-estimation risk holds  for any design matrix $\design$.  For the prediction risk, the lower bound that we state below is borrowed from \cite[Theorem 6.1]{lptv2011} and it is meaningful only if the minimal sparse eigenvalue is positive.
For any matrix $\design\in\R^{n\times p}$ and any $s\in [1,p]$, define the minimal and maximal $s$-sparse eigenvalues
as follows:
\begin{equation}\label{SPD}
\bar\theta_{\min}(\design, s) \triangleq \min_{\vdelta\in B_0(s)\setminus \{0\}} \frac{\norm{\design\vdelta}_n}{\norma{\vdelta}_2},
 \qquad
\bar\theta_{\max}(\design, s) \triangleq \max_{\vdelta\in B_0(s)\setminus \{0\}} \frac{\norm{\design\vdelta}_n}{\norma{\vdelta}_2}.
\end{equation}
In particular, $\bar\theta_{\max}(\design, 1)=\max_{j=1,\dots,p}\norm{\design \ve_j}_n. $

Let $\ell\,:\, \R_{+} \rightarrow \R_{+}$ be a nondecreasing function such that $\ell(0)= 0$ and $\ell
\not\equiv 0$. Define
$$
\psi_{n,q} =\sigma s^{1/q} \sqrt{\frac{\log (ep/s)}{n}}, \quad \quad 1\le q\le
\infty,
$$
where we set $s^{1/\infty}\triangleq 1$. Let ${\mathbb
 E}_{\vbeta}$ denote the expectation with respect to the measure ${\mathbb
 P}_{\vbeta}$.

\begin{theorem} \label{th_minimax_lb} Let $p \geq 2$, $s\in[1, p/2]$,
$n \geq 1$ be integers, and  let $1\le q \le\infty$. Assume that $\vf=\design\vbeta^*$ and $\vxi\sim \cN(\vzero,\sigma^2 I_{n\times n})$, $\sigma>0$.  Then the following holds.
\begin{itemize}
\item[(i)] There exist positive constants
${\bar b}, {\bar c}$ depending only on $\ell(\cdot)$ and $q$ such
that
\begin{equation}\label{eq:minm_lb2}
 \inf_{\hat\vtau} \inf_{\design}\sup_{\vbeta^*\in B_0(s)}{\mathbb
 E}_{\vbeta^*} \ell\left({\bar b}\psi_{n,q}^{-1}\bar\theta_{\max}(\design, 1)
 \norma{\hat \vtau - \vbeta^*}_q \right)
  \geq {\bar c}
 \end{equation}
where $\inf_{\hat\vtau}$ denotes the infimum over all estimators ${\hat\vtau}$
of $\vbeta^*$, and $\displaystyle{\inf_{\design}}$ denotes the infimum over all matrices $\design\in\R^{n\times p}$.
\item[(ii)]
There exist positive constants ${\bar b}, {\bar c}$
depending only on $\ell(\cdot)$ such that
  \begin{equation}\label{eq:minm_lb1}
 \inf_{\hat\vtau}\inf_{\design}\sup_{\vbeta^*\in B_0(s)}{\mathbb
 E}_{\vbeta^*}  \ell\left({\bar b}\psi_{n,2}^{-1}\frac{\bar\theta_{\max}(\design,2s)}{\bar\theta_{\min}(\design,2s)}\norm{\design(\hat\vtau - \vbeta^*)}_n\right)
  \geq {\bar c}
 \end{equation}
 where, by definition, the expression under the expectation is  $+\infty$ for matrices $\design$ such that $\bar\theta_{\min}(\design,2s)=0$.
 \end{itemize}
\end{theorem}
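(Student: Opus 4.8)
The plan is to obtain both bounds by the standard reduction to a finite family of hypotheses followed by Fano's inequality, the two parts differing only in the combinatorial family used and in the way the Kullback--Leibler divergences are controlled. Since $\vxi\sim\cN(\vzero,\sigma^2 I_{n\times n})$, for any $\vbeta,\vbeta'\in\R^p$ one has the exact identity $\mathrm{KL}(\Pro_{\vbeta}\|\Pro_{\vbeta'})=\tfrac{n}{2\sigma^2}\norm{\design(\vbeta-\vbeta')}_n^2$, and the whole issue is to keep the average pairwise divergence below $\alpha\log N$, where $N$ is the number of hypotheses and $\alpha$ a small absolute constant. For part (i) we may assume $\bar\theta_{\max}(\design,1)=1$: replacing $\design$ by $\design/\bar\theta_{\max}(\design,1)$ and $\vbeta^*$ by $\bar\theta_{\max}(\design,1)\vbeta^*$ leaves the class $B_0(s)$ and the quantity inside $\ell$ unchanged.

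For part (i) I would first invoke the Varshamov--Gilbert bound for constant-weight codes: using $s\le p/2$, there exist subsets $S_1,\dots,S_N$ of $\{1,\dots,p\}$ with $|S_i|=s$, $|S_i\triangle S_j|\ge s/4$ for $i\ne j$, and $\log N\ge c_1 s\log(ep/s)$ with $c_1>0$ absolute. Attach to each $S_i$ an independent uniform sign vector, obtaining $\vomega^{(i)}\in\{-1,0,1\}^p$ supported on $S_i$, and set $\vbeta^{(i)}=\rho\,\vomega^{(i)}$ for a radius $\rho>0$ to be chosen; these lie in $B_0(s)$. The disjointness of the symmetric differences gives, for \emph{any} realization of the signs, the separation $\norma{\vbeta^{(i)}-\vbeta^{(j)}}_q\ge\rho\,|S_i\triangle S_j|^{1/q}\ge\rho\,(s/4)^{1/q}$ for $1\le q<\infty$ (and $\ge\rho$ for $q=\infty$). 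For the divergences, $\mathrm{KL}(\Pro_{\vbeta^{(i)}}\|\Pro_{\vbeta^{(j)}})=\tfrac{n\rho^2}{2\sigma^2}\norm{\design(\vomega^{(i)}-\vomega^{(j)})}_n^2$, and taking expectation over the independent signs kills all off-diagonal Gram terms, leaving $\E\,\norm{\design(\vomega^{(i)}-\vomega^{(j)})}_n^2\le 2\sum_{k\in S_i\cup S_j}\norm{\design\ve_k}_n^2\le 4s$; hence there is a sign realization for which $\tfrac1{N^2}\sum_{i,j}\mathrm{KL}(\Pro_{\vbeta^{(i)}}\|\Pro_{\vbeta^{(j)}})\le \tfrac{2n\rho^2 s}{\sigma^2}$, while the separation bound continues to hold for that realization. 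Choosing $\rho^2\asymp\sigma^2\log(ep/s)/n$ makes this at most $\alpha\log N$ and gives $\norma{\vbeta^{(i)}-\vbeta^{(j)}}_q\gtrsim \psi_{n,q}$ (i.e.\ $\psi_{n,q}/\bar\theta_{\max}(\design,1)$ after undoing the normalization). The version of Fano's inequality using $I\le\tfrac1{N^2}\sum_{i,j}\mathrm{KL}(\Pro_{\vbeta^{(i)}}\|\Pro_{\vbeta^{(j)}})$ for the mutual information under the uniform prior then yields $\inf_{\hat\vtau}\max_i\Pro_{\vbeta^{(i)}}\big(\norma{\hat\vtau-\vbeta^{(i)}}_q\ge\tfrac12\min_{k\ne\ell}\norma{\vbeta^{(k)}-\vbeta^{(\ell)}}_q\big)\ge c_3>0$. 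Since $\ell$ is nondecreasing with $\ell\not\equiv0$, choosing $\bar b$ small enough (depending on $\ell$ and $q$) so that $\ell$ evaluated at $\bar b\,\psi_{n,q}^{-1}\bar\theta_{\max}(\design,1)\cdot\tfrac12\min_{k\ne\ell}\norma{\vbeta^{(k)}-\vbeta^{(\ell)}}_q$ equals a fixed positive number, Markov's inequality converts the last display into the claimed bound $\ge\bar c$; the case $q=\infty$ is identical with $\norma{\cdot}_\infty$ in place of $\norma{\cdot}_q$ and $s^{1/\infty}=1$.

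Part (ii) I would not reprove: the stated inequality is Theorem 6.1 of \cite{lptv2011} in the case $T=1$ of ordinary (non-group) sparsity, once the empirical norm $\norm{\design(\cdot)}_n$ and the $2s$-sparse eigenvalues $\bar\theta_{\min}(\design,2s),\bar\theta_{\max}(\design,2s)$ of \eqref{SPD} are matched with the corresponding quantities there, and one notes that $\psi_{n,2}=\sigma\sqrt{s\log(ep/s)/n}$ is exactly the prediction rate in that reference. In spirit it is again a Fano argument over a sparse Hamming cube, with the maximal $2s$-sparse eigenvalue entering through the upper bound on the divergences and the minimal one through the lower bound on the prediction separation, which is why their ratio appears; the convention that the bracket is $+\infty$ when $\bar\theta_{\min}(\design,2s)=0$ makes the statement vacuous for such degenerate designs.

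The main obstacle is the divergence control in part (i). A naive family of $0/1$ sparse vectors, or even random supports with fixed signs, only gives $\norm{\design(\vbeta^{(i)}-\vbeta^{(j)})}_n^2\lesssim s^2\bar\theta_{\max}^2(\design,1)$, because the off-diagonal entries of the Gram matrix of an arbitrary design may all conspire in one direction, which costs a spurious factor $\sqrt s$ in the rate. Randomizing the signs — so that the cross terms vanish in expectation — together with the averaged form of Fano's inequality is precisely what allows the divergence to be controlled through $\bar\theta_{\max}(\design,1)$ alone, and hence for every design matrix; this is the improvement over \cite{lptv2011} announced in the text.
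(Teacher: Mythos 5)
Your proposal is correct, and for part (ii) it coincides with the paper's treatment (both simply invoke Theorem 6.1 of \cite{lptv2011} with $T=1$). For part (i) the overall skeleton is also the same — reduce to a finite family of $s$-sparse sign vectors of a calibrated amplitude, bound the Kullback--Leibler divergences through $\bar\theta_{\max}(\design,1)$, and conclude by a Fano-type bound plus monotonicity of $\ell$ — but the way you obtain the key combinatorial input differs in an instructive way. The paper delegates everything to Lemma~\ref{lem:verzelen}, which asserts a deterministic set $\Omega\subset\{1,0,-1\}^p$ of cardinality $\exp(\tilde c\, s\log(ep/s))$, $\ell_q$-separated, in which \emph{each individual} element satisfies $\norm{\design\vomega}_n^2\le \bar\theta_{\max}^2(\design,1)\,s$; this gives a uniform bound on every pairwise divergence, and \cite[Theorem~2.7]{tsy09} applies directly. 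The proof of that lemma is omitted (it is attributed to Verzelen), and your Varshamov--Gilbert-plus-random-signs construction is essentially a reconstruction of it, with one genuine deviation: you only control the \emph{average} pairwise quantity $\E\,\norm{\design(\vomega^{(i)}-\vomega^{(j)})}_n^2\le 4s$ and therefore must use the averaged form of Fano's inequality ($I\le N^{-2}\sum_{i,j}\mathcal K(\Pro_i,\Pro_j)$) rather than a uniform-KL version. That is perfectly valid and buys you a shorter, fully self-contained argument; the paper's route buys uniform divergence control (hence applicability of the cleaner two-sided Theorem~2.7 statement) at the cost of citing an unproved lemma. Your closing diagnosis — that sign randomization is precisely what kills the off-diagonal Gram terms and avoids the spurious extra factor of $s$ that a fixed-sign family would incur — is exactly the point hidden inside Lemma~\ref{lem:verzelen}, so the two proofs rest on the same idea even though only yours makes it explicit.
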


\begin{proof}
Part (ii) is a special case of Theorem 6.1 in \cite{lptv2011} corresponding to the number of groups $T=1$.
To prove part (i), we use Lemma~\ref{lem:verzelen} stated in the Appendix. Let $\mathcal{B} = \{{\vbeta}= a{\vomega}: \vomega\in \Omega\}$, where $\Omega$ is a subset of $\big\{1, 0,-1\big\}^p$ satisfying Lemma~\ref{lem:verzelen},
$a= \alpha \bar\theta_{\max}^{-1}(\design, 1) \psi_{n,q}s^{-1/q}$, and $0<\alpha< {\tilde c}^{1/2}/4$, where $\tilde{c}$ is a constant appearing in Lemma~\ref{lem:verzelen}. It follows from Lemma~\ref{lem:verzelen} that $\mathcal{B}\subset B_0(s)$, and
\begin{eqnarray}\label{verz1}
    \norma{\vbeta - \vbeta'}_q \ge 4^{-1/q}\alpha \bar\theta_{\max}^{-1}(\design, 1)  \psi_{n,q}
 \end{eqnarray}
 for any two distinct elements $\vbeta$ and $\vbeta'$ of $\mathcal{B}$. Again from Lemma~\ref{lem:verzelen}, for all $\vbeta$ and $\vbeta'$ in $\mathcal{B}$, the Kullback-Leibler divergence $\mathcal{K}(\Pro_{\vbeta},\Pro_{\vbeta'})$ between the probability measures $\Pro_{\vbeta}$ and $\Pro_{\vbeta'}$ satisfies
\begin{eqnarray}\label{verz2}
   \mathcal{K}(\Pro_{\vbeta},\Pro_{\vbeta'}) &=& \frac{n}{2\sigma^2}     \norm{\design(\vbeta-\vbeta')}^2_n \le \frac{\alpha^2n}{\sigma^2} \psi_{n,q}^{2}s^{1-2/q} \\ 
    &= & \frac{\alpha^2 s}{n}\log\left(\frac{ep}{s}\right) \le  \frac{\tilde c s}{16 n}\log\left(\frac{ep}{s}\right) \le \frac{1}{16}\log ({\rm Card} \,\mathcal{B}). \nonumber
              \end{eqnarray}
 The bound \eqref{eq:minm_lb2} now follows from   \eqref{verz1} and   \eqref{verz2}  in view of \cite[Theorem~2.7]{tsy09}.
\end{proof}

As a consequence of Theorem~\ref{th_minimax_lb}, we get, for
example, lower bounds for the squared loss $\ell(u)=u^2$ and for
the indicator loss $\ell(u)=I\{u\ge 1\}$. The indicator loss is
relevant for comparison with the upper bounds in probability obtained in the previous sections.
For example,
Theorem~\ref{th_minimax_lb} with this loss and $1\le q\le 2$ implies that
 for any estimator
 $\hat\vtau$ there exists $\vbeta^*\in B_0(s)$ such that, with ${\mathbb
 P}_{\vbeta^*}$-probability at least $\bar{c}>0$,
$$
\norm{\design(\hat\vtau - \vbeta^*)}_n^2 \ge C\frac{\sigma^2 s}{n}\log\left(\frac{ep}{s}\right)
$$
and
$$
\norma{\hat\vtau - \vbeta^*}_q \ge C\frac{\sigma s^{1/q}}{\sqrt{n}}\log\left(\frac{ep}{s}\right)
^{1/2}, \quad 1\le q\le 2,
$$
where $\bar{c}>0$ is a numerical constant and $C>0$ is some constant depending only on $\design$. The rates on the right-hand side of these
inequalities have the same form as in the corresponding upper
bounds for the Lasso and Slope estimators obtained in Corollaries \ref{cor:lasso-deviation} and \ref{cor:slope-deviations}.
The fact that the constants $C$ here depend on the design implies that the optimality is not guaranteed for all configurations of $n, s,p$. Thus, we get the rate optimality under the assumption that $s\log(ep/s) < cn$ for the $\ell_2$-risk, and under the assumption $s\log(ep/s) < c R$ for the prediction risk, where $c>0$ is a constant. Here, $R$ denotes the rank of matrix $\design$. Concerning the prediction risk, this remark is based on the following fact.

\begin{corollary}\label{cor:verz}
Let $p \geq 2$, $s\in[1, p/2]$ and
$n \geq 1$ be integers. If for some matrix $\design\in\R^{n\times p}$ and some $b>0$ we have $\frac{\bar\theta_{\max}(\design,2s)}{\bar\theta_{\min}(\design,2s)}\le b$, then there exists $c=c(b)>0$ such that $s\log(ep/s) < c R$.
\end{corollary}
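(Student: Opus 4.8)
The plan is to confront the minimax lower bound of Theorem~\ref{th_minimax_lb}(ii) with the elementary upper bound for the prediction risk furnished by the unrestricted least-squares estimator, so that the bounded-ratio hypothesis forces the rank $R$ to be large. First I would record two preliminary observations: the hypothesis $\bar\theta_{\max}(\design,2s)/\bar\theta_{\min}(\design,2s)\le b$ is only meaningful when $\bar\theta_{\min}(\design,2s)>0$, which forces $\design\ne\vzero$ and hence $R=\mathrm{rank}(\design)\ge 1$; and $2s\le p$ because $s\le p/2$, so all quantities entering Theorem~\ref{th_minimax_lb}(ii) are well defined and, since $\bar\theta_{\min}(\design,2s)>0$, its lower bound is a genuine bound on a finite quantity.

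Next I would introduce the projection estimator. Let $P\in\R^{n\times n}$ be the orthogonal projector onto the column space of $\design$ and let $\hat\vtau^{\rm LS}$ be any vector with $\design\hat\vtau^{\rm LS}=P\vy$ (such a vector exists since $P\vy$ lies in the range of $\design$). Because $\vf=\design\vbeta^*$ belongs to the range of $\design$, one gets $\design(\hat\vtau^{\rm LS}-\vbeta^*)=P\vxi$ for every $\vbeta^*$, hence $\mathbb{E}_{\vbeta^*}\norm{\design(\hat\vtau^{\rm LS}-\vbeta^*)}_n^2=\mathbb{E}\norm{P\vxi}_n^2=\sigma^2 R/n$ uniformly over $\vbeta^*\in B_0(s)$, using $\mathbb{E}[\vxi^T P\vxi]=\sigma^2\,\mathrm{tr}(P)=\sigma^2 R$ for $\vxi\sim\mathcal N(\vzero,\sigma^2 I_{n\times n})$ together with $P^TP=P$.

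Then I would apply Theorem~\ref{th_minimax_lb}(ii) with the loss $\ell(u)=u^2$ (admissible: nondecreasing, $\ell(0)=0$, $\ell\not\equiv0$) to the particular estimator $\hat\vtau^{\rm LS}$. Since the factor $\bar b^2\psi_{n,2}^{-2}\bar\theta_{\max}^2(\design,2s)\bar\theta_{\min}^{-2}(\design,2s)$ is deterministic, it comes out of the expectation, so the lower bound reads $\bar b^2\psi_{n,2}^{-2}\bar\theta_{\max}^2(\design,2s)\bar\theta_{\min}^{-2}(\design,2s)\sup_{\vbeta^*}\mathbb{E}_{\vbeta^*}\norm{\design(\hat\vtau^{\rm LS}-\vbeta^*)}_n^2\ge\bar c$ with absolute constants $\bar b,\bar c>0$. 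Plugging in the identity of the previous paragraph, bounding $\bar\theta_{\min}^2/\bar\theta_{\max}^2\ge b^{-2}$, and recalling $\psi_{n,2}^2=\sigma^2 s\log(ep/s)/n$, I would obtain $\sigma^2 R/n\ge(\bar c/(\bar b^2 b^2))\,\sigma^2 s\log(ep/s)/n$, i.e. $s\log(ep/s)\le(\bar b^2 b^2/\bar c)\,R$. Finally, taking $c=c(b)\triangleq 2\bar b^2 b^2/\bar c$ and using $R\ge1$ turns this into the strict inequality $s\log(ep/s)<cR$ asserted by the corollary.

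The only delicate point is to ensure that the constant produced in the last step depends on $b$ alone: this is exactly why the ratio $\bar\theta_{\max}/\bar\theta_{\min}$ must be isolated and bounded by $b$, and why it is essential that $\bar b,\bar c$ in Theorem~\ref{th_minimax_lb}(ii) depend only on the loss, not on $\design$. A self-contained route avoiding the minimax theorem is a direct volumetric argument: with $\mathcal B=a\Omega$ the scaled hypercube family of Lemma~\ref{lem:verzelen}, the images $\{\design\vbeta:\vbeta\in\mathcal B\}$ lie, by $\bar\theta_{\max}(\design,2s)$, in a $\norm{\cdot}_n$-ball of radius $O(\bar\theta_{\max}(\design,2s)a\sqrt s)$ contained in the $R$-dimensional range of $\design$, while being, by $\bar\theta_{\min}(\design,2s)$ together with the pairwise separation of $\Omega$, mutually $\gtrsim\bar\theta_{\min}(\design,2s)a\sqrt s$-separated; the elementary volume bound then gives $\mathrm{Card}\,\Omega\le(1+Cb)^R$, and since $\log\mathrm{Card}\,\Omega\gtrsim s\log(ep/s)$ this again yields $s\log(ep/s)\lesssim_b R$.
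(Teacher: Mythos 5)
Your argument is exactly the paper's proof, merely written out in full: the paper derives the corollary by combining the lower bound \eqref{eq:minm_lb1} with $\ell(u)=u^2$ and the fact that the minimax expected squared prediction risk is at most $\sigma^2 R/n$, which is precisely your projection-estimator computation. The details check out (the deterministic ratio factors out of the expectation, $\psi_{n,2}^2=\sigma^2 s\log(ep/s)/n$, and $R\ge 1$ gives the strict inequality), and your sketched volumetric alternative is a reasonable self-contained backup, though not needed.
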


This corollary follows immediately from \eqref{eq:minm_lb1} with $\ell(u)=u^2$ and the fact that the minimax expected squared risk is bounded from above by $\sigma^2 R/n$ (cf., for example, \cite{rigollet2011exponential}).

In view of Corollary~\ref{cor:verz}, the bound \eqref{eq:minm_lb1} is non-trivial only when $s\log(ep/s) < c R$. The bound \eqref{eq:minm_lb2} does not have such a restriction and remains non-trivial for all $n, s,p$. However, it is known that for $q=2$ and $s\log(ep/s)\gg n$ this bound is not optimal \cite{verzelen2012}. Anyway, \eqref{eq:minm_lb2} shows that if $s\log(ep/s)\gg n$, the $\ell_2$-risk diverges, so this case is of minor interest.  Also note that the upper bounds of Theorems~\ref{thm:lasso-main-event}, \ref{th:slope} and their corollaries rely on $RE$ type conditions, and we need that $s\log(ep/s) < cn$ for these conditions to be satisfied if the matrix $\design$ is random (see more details in the next section).


\section{Assumptions on the design matrix}
\label{sec:assumptions}

\subsection{Equivalence between $RE$, $SRE$ and $s$-sparse eigenvalue conditions}\label{subsec:8.1}
Along with the $RE$ and $SRE$ conditions
defined in Section \ref{sec:lasso-4} we consider here the
$s$-sparse eigenvalue condition defined
as follows, for any $s\in\{1,\dots,p\}$.

\begin{condition}[$s$-sparse eigenvalue]
    \label{def:weak_rip}
    The design matrix $\design$ satisfies $\norm{\design\ve_j}_n \le 1$ for all $j=1,\dots,p$,
    and $ \bar\theta_{\min}(\design, s)>0$.
\end{condition}
In this section, we will write for brevity $ \bar\theta_{\min}(s) = \bar\theta_{\min}(\design, s)$. The next proposition
establishes the equivalence between the three conditions mentioned above.

\begin{proposition}
    Let $c_0>0$ and $s\in\{1,\dots,p\}$.
    We have the following implications.
    \begin{itemize}
        \item[{\rm (i)}] If condition $SRE(s,c_0)$ holds then
            condition $RE(s,c_0)$ holds and $\kappa(s,c_0) \ge \theta(s,c_0)$.
        \item[{\rm (ii)}] If condition $RE(s,c_0)$ holds then
            the $s$-sparse eigenvalue condition holds
            and $\bar\theta_{\min}(s) \ge \kappa(s,c_0)$.
        \item[{\rm (iii)}]
            Let $\theta_1>0$. 
            If the $s$-sparse eigenvalue condition holds
            with $\bar\theta_{\min}(s) \ge \theta_1$ then
            the $SRE(s_1,c_0)$ condition holds and
            $\theta(s_1,c_0) \ge \theta_1 / \sqrt 2$ for $s_1\le (s-1)\theta_1^2/(2c_0^2)$.
    \end{itemize}
\end{proposition}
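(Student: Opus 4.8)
The plan is to treat the three implications separately, in increasing order of difficulty.

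For (i): the normalization clause $\norm{\design\ve_j}_n\le 1$ appears in both conditions, so it suffices to compare the two infima. This is immediate from the inclusion $\cC_{RE}(s,c_0)\subseteq\cC_{SRE}(s,c_0)$ already recorded in \eqref{CC}: the infimum of $\vdelta\mapsto\norm{\design\vdelta}_n/\norma{\vdelta}_2$ over the smaller cone is at least the infimum over the larger one, hence $\kappa(s,c_0)\ge\theta(s,c_0)$, and $\theta(s,c_0)>0$ forces $\kappa(s,c_0)>0$. For (ii): I would check that every nonzero $s$-sparse vector lies in $\cC_{RE}(s,c_0)$. If $\norma{\vdelta}_0\le s$ then all nonzero coordinates of $\vdelta$ are among its $s$ largest, so $\sum_{j=1}^s\delta_j^\sharp=\norma{\vdelta}_1\le(1+c_0)\sum_{j=1}^s\delta_j^\sharp$ since $c_0>0$; thus $B_0(s)\setminus\{\vzero\}\subseteq\cC_{RE}(s,c_0)$, and taking the infimum over the larger set gives $\bar\theta_{\min}(s)\ge\kappa(s,c_0)>0$, which is the $s$-sparse eigenvalue condition with the stated bound.

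Part (iii) is where the real work lies. Fix $\vdelta\in\cC_{SRE}(s_1,c_0)$ and normalize so that $\norma{\vdelta}_2=1$; the cone constraint then reads $\norma{\vdelta}_1\le(1+c_0)\sqrt{s_1}$, and the goal is $\norm{\design\vdelta}_n\ge\theta_1/\sqrt 2$. Let $T_0$ index the $m$ largest coordinates of $\vdelta$ in absolute value, with $m$ to be chosen. The elementary tail estimate $\norma{\vdelta_{T_0^c}}_2\le\norma{\vdelta}_1/\sqrt m$ (from $\norma{\vdelta_{T_0^c}}_\infty\le\norma{\vdelta}_1/m$ and $\norma{\vdelta_{T_0^c}}_2^2\le\norma{\vdelta_{T_0^c}}_\infty\norma{\vdelta_{T_0^c}}_1$) shows that, for $m$ a suitable multiple of $c_0^2 s_1$, the leading part $\vdelta_{T_0}$ carries almost all of the $\ell_2$-mass of $\vdelta$, while the cone also controls $\norma{\vdelta_{T_0^c}}_1$. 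The crucial step is Maurey's empirical (probabilistic) sparsification of the tail: there is a vector $\vh$ supported on at most $k$ coordinates of $T_0^c$, with $\norma{\vh}_1\le\norma{\vdelta_{T_0^c}}_1$ and $\norm{\design(\vdelta_{T_0^c}-\vh)}_n^2\le\norma{\vdelta_{T_0^c}}_1^2/k$, obtained by averaging $k$ i.i.d.\ copies of the random vector equal to $\norma{\vdelta_{T_0^c}}_1\,\mathrm{sign}(\delta_i)\ve_i$ with probability $|\delta_i|/\norma{\vdelta_{T_0^c}}_1$ on $\supp(\vdelta_{T_0^c})$ and using only $\norm{\design\ve_j}_n\le 1$ to bound the second moment of its $\design$-image. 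Then $\vw=\vdelta_{T_0}+\vh$ is supported on at most $m+k$ coordinates; choosing $m,k$ with $m+k\le s$, the hypothesis $\bar\theta_{\min}(s)\ge\theta_1$ gives $\norm{\design\vw}_n\ge\theta_1\norma{\vw}_2\ge\theta_1\norma{\vdelta_{T_0}}_2$, and $\norm{\design\vdelta}_n\ge\norm{\design\vw}_n-\norm{\design(\vdelta_{T_0^c}-\vh)}_n\ge\theta_1\norma{\vdelta_{T_0}}_2-\norma{\vdelta_{T_0^c}}_1/\sqrt k$. This is at least $\theta_1/\sqrt 2$ once $k\gtrsim c_0^2 s_1/\theta_1^2$ and $m\gtrsim c_0^2 s_1$, so the requirement $m+k\le s$ yields the admissible range $s_1\lesssim s\theta_1^2/c_0^2$.

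The main obstacle is twofold. First, Maurey's lemma is the only non-elementary ingredient and must be set up with the correct target sparsity and $\ell_1$-bound; without a device that beats the crude estimate $\bar\theta_{\max}(k)\le\sqrt k$, a naïve block decomposition leaves a tail term of size $\sim\norma{\vdelta}_1$ and fails. Second, the numerical bookkeeping needed to land exactly on $\theta(s_1,c_0)\ge\theta_1/\sqrt 2$ and on the threshold $s_1\le(s-1)\theta_1^2/(2c_0^2)$ requires a sharp form of the tail estimate for $\norma{\vdelta_{T_0^c}}_1$ (for the extremal near-flat vectors in the $SRE$ cone this is $\le 2c_0\sqrt{s_1}\,\norma{\vdelta}_2$, which is what produces the constant $2c_0^2$) together with the refined variance bound in Maurey's construction; the choice $k=s-1$ and $m$ minimal accounts for the $(s-1)$. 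Finally, since the displayed lower bound is uniform over $\vdelta$ in the cone and strictly positive, the $SRE(s_1,c_0)$ condition holds with $\theta(s_1,c_0)\ge\theta_1/\sqrt 2$.
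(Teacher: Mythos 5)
Parts (i) and (ii) of your argument are exactly the paper's: (i) is the cone inclusion \eqref{CC}, and (ii) is the observation that $B_0(s)\setminus\{\vzero\}\subseteq\cC_{RE}(s,c_0)$ (note $\bar\theta_{\min}(s)$ is an infimum over the \emph{smaller} set, hence dominated from below by $\kappa(s,c_0)$). No issues there.

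For (iii) the paper does not argue from scratch: it invokes Lemma~2.7 of \cite{LM_compressed}, which says that $\bar\theta_{\min}(s)\ge\theta_1$ together with $\max_j\norm{\design\ve_j}_n\le1$ implies $\norm{\design\vdelta}_n^2\ge\theta_1^2\norma{\vdelta}_2^2-\norma{\vdelta}_1^2/(s-1)$ for \emph{all} $\vdelta\in\R^p$; restricting to $\cC_{SRE}(s_1,c_0)$, where $\norma{\vdelta}_1^2\le(1+c_0)^2 s_1\norma{\vdelta}_2^2$, finishes the proof in one line. Your Maurey sparsification is the right engine (it is essentially how that lemma is proved), but the way you deploy it has a genuine quantitative gap that bookkeeping cannot close. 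Your chain gives $\norm{\design\vdelta}_n\ge\theta_1\norma{\vdelta_{T_0}}_2-\norma{\vdelta_{T_0^c}}_1/\sqrt k$ with $\norma{\vdelta_{T_0^c}}_1\le(1+c_0)\sqrt{s_1}$ and $m+k\le s$, so $k\le s-1$. At the threshold $s_1=(s-1)\theta_1^2/(2(1+c_0)^2)$ the subtracted term is already $(1+c_0)\sqrt{s_1/k}=\theta_1\sqrt{(s-1)/(2k)}\ge\theta_1/\sqrt2$, so your lower bound is strictly below $\theta_1/\sqrt2$ no matter how $m,k$ are chosen; optimizing only rescues the conclusion for $s_1\lesssim(s-1)\theta_1^2/(25(1+c_0)^2)$ or so. Your claimed sharpening $\norma{\vdelta_{T_0^c}}_1\le 2c_0\sqrt{s_1}\norma{\vdelta}_2$ is a heuristic about near-flat extremizers, not a uniform bound on the cone, and does not change this. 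The loss is structural: the triangle inequality discards the cross term between $\design\vw$ and $\design(\vdelta_{T_0^c}-\vh)$.

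The repair is to run Maurey on the quadratic form itself, with no splitting off of $T_0$. Let $A=\tfrac1n\design^T\design$, let $Z$ be your random sign-vector built from all of $\vdelta$ (so $\E Z=\vdelta$, $\norma{Z}_2^2=\norma{\vdelta}_1^2$, $\E[Z^TAZ]\le\norma{\vdelta}_1^2$ by $\max_jA_{jj}\le1$), and let $W$ be the average of $k$ i.i.d.\ copies. Then
\begin{equation*}
\E\bigl[W^TAW-\theta_1^2\norma{W}_2^2\bigr]=\tfrac{k-1}{k}\bigl(\vdelta^TA\vdelta-\theta_1^2\norma{\vdelta}_2^2\bigr)+\tfrac1k\,\E\bigl[Z^TAZ-\theta_1^2\norma{Z}_2^2\bigr],
\end{equation*}
and since every realization of $W$ is $k$-sparse the left-hand side is nonnegative for $k\le s$; rearranging gives precisely the cited inequality with denominator $s-1$, hence the stated constant $\theta_1/\sqrt2$. (Even so, this route requires $s_1\le(s-1)\theta_1^2/(2(1+c_0)^2)$; the $2c_0^2$ in the paper's statement appears to conflate $c_0$ with the cone constant $1+c_0$, so do not try to reach $2c_0^2$ exactly.) The point of the identity is that the term $\vdelta^TA\vdelta-\theta_1^2\norma{\vdelta}_2^2$ appears with an exact coefficient and no error to absorb, which is what your additive version gives up.
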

\begin{proof} Part (i) follows from \eqref{CC}. Next,
    if $\vdelta\in B_0(s)$
    then obviously $\norma{\vdelta}_1 \le (1+c_0) \sum_{j=1}^s \delta_j^\sharp$
    with $c_0 = 0$.
    Thus, the set of all $s$-sparse vectors $B_0(s)$
    is included in the cone ${\mathcal C}_{RE}(s,c_0)$ for any $c_0>0$. This implies (ii).     To prove (iii), we use Lemma 2.7 in  \cite{LM_compressed}, which implies that if $\bar\theta_{\min}(s) \ge \theta_1$, and $\norm{\design\ve_j}_n \le 1$ for all $j=1,\dots,p$, then $\norm{\design \vdelta}_n^2\ge  \theta_1^2 |\vdelta|_2^2 - |\vdelta|_1^2/(s-1)$ for all $\vdelta\in\R^p$.
\end{proof}
The message of the above proposition is that
the three conditions -- $RE(s,c_0)$, $SRE(s,c_0)$ and
the $s$-sparse eigenvalue condition --
are equivalent up to absolute constants.
This equivalence has two main consequences for the results of the present paper.

\begin{itemize}
\item
First, the results on the Lasso in Sections \ref{sec:lasso-4} and \ref{sec:lepski} are proved under the $SRE(s,c_0)$ condition.
The above equivalence shows that, for some integer $s_1$, which is of the same order as $s$,
the oracle inequalities and the estimation bounds of Sections \ref{sec:lasso-4} and \ref{sec:lepski}
are valid under the Restricted Eigenvalue condition $RE(s_1,c_0)$.
\item
Second, the $s$-sparse eigenvalue condition is known to hold with high probability
for rather general random matrices with i.i.d. rows.
By the above equivalence,
conditions $RE(s,c_0)$ and $SRE(s,c_0)$
are satisfied for the same random matrices.
A useful sufficient condition for the $s$-sparse eigenvalue
condition is the small ball condition
\cite{Shahar-Vladimir, Shahar-COLT, MR3367000}.
A random vector $\vx$ valued in $\R^p$ is said to satisfy the small ball
condition over $B_0(s_1)$ if there exist positive numbers $u$ and $\beta$ such that
\begin{equation}\label{eq:small_ball_cond}
       \Pro\left[
        | \vdelta^T \vx|\geq u \norma{\vdelta}_2
      \right]\geq \beta, \qquad  \forall \vdelta\in B_0(s_1).
\end{equation}
Let $\design\in\R^{n\times p}$ be a matrix with i.i.d. rows that have the same distribution as $\vx$ satisfying \eqref{eq:small_ball_cond}.
Corollary~2.5 in \cite{LM_compressed} establishes that, for such $\design$ we have
$\bar\theta_{\min}(s_1)>u/\sqrt 2$
with probability at least $1-\exp(- C n \beta^2)$
if $n\geq (C'/\beta^2) s \log(ep/s)$ for some absolute constants
$C,C'>0$.
\end{itemize}

Note that condition \eqref{eq:small_ball_cond} is very mild.
For instance, a vector $\vx$ with independent components that have a Cauchy distribution
satisfies this condition.
Thus, condition \eqref{eq:small_ball_cond} is quite different in nature from any
concentration property.
On the other hand, the property $\max_{j=1,\dots,p}\norm{\design\ve_j}_n \le 1$
assumed in the above three conditions (which is usually seen as a simple normalization) requires concentration. Indeed, this inequality can be written as
\begin{equation*}
     \frac 1 n \sum_{i=1}^n (\vx_i^T \ve_j)^2 \le 1, \quad  \forall j=1,\dots,p,
\end{equation*}
where $\vx_1,\dots,\vx_n$ are the i.i.d. rows of $\design$.
We have here a sum of  i.i.d. positive random variables.
Satisfying $\max_{j=1,\dots,p}\norm{\design\ve_j}_n \le 1$ with high probability
requires $\E[ (\vx^T\ve_j)^2 ] < 1$ and some concentration property
of the random variables $(\vx^T\ve_j)^2$ for all $j=1,\dots,p$.
It is proved in  \cite{LM_compressed} that this property holds with high
probability when the components of $\vx$ (that do not have
to be independent) have
moments with a polynomial growth up to the order $\log(ep)$ and that this condition may be violated with probability greater than $1/2$ if the coordinates only have $\log(ep)/\log\log(ep)$ such moments.

In conclusion, for a large class of random matrices
with i.i.d.  rows,
condition $SRE(s,c_0)$ holds with high probability if
\begin{equation}\label{eq:snp}
s\log(ep/s)\le cn,
\end{equation}
 where $c>0$ is a constant.

\subsection{Design conditions for the Slope estimator}

Theorem \ref{th:slope} and Corollaries \ref{cor:slope-deviations}, \ref{cor:slope-E}
establish prediction and estimation bounds for the Slope estimator
under the $WRE(s,c_0)$ condition.
{It was explained in Section \ref{sec:slope} that the $WRE(s,c_0)$ condition implies the
 $SRE(s,1+c_0)$ condition.}
The converse is not true -- there is no equivalence between the two conditions. However, a simple observation leads to the following sufficient condition
for $WRE(s,c_0)$.
\begin{proposition}\label{prop:wre}
    Let $s\in\{1,\dots,p\}$, $c_0>0$, and let the weights $\lambda_j$ be defined by \eqref{eq:recommendation-weights-slope}.
    Set $s_2 = {\lceil s\log(2ep/s)/\log 2\rceil}$.
    If the $SRE(s_2,c_0)$ condition holds then the
    $WRE(s,c_0)$ condition holds, and $\vartheta(s,c_0) \ge \theta(s_2,c_0)$.
\end{proposition}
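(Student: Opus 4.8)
The plan is to deduce the statement from a single cone inclusion, namely $\cC_{WRE}(s,c_0)\subseteq\cC_{SRE}(s_2,c_0)$. Granting this, the rest is automatic: both the $WRE$ and the $SRE$ conditions carry the same normalization requirement $\norm{\design\ve_j}_n\le 1$, and, since the infimum defining $\vartheta(s,c_0)$ is then taken over a subset of the cone defining $\theta(s_2,c_0)$, we obtain $\vartheta(s,c_0)\ge\theta(s_2,c_0)$. If $SRE(s_2,c_0)$ holds this quantity is positive, so $WRE(s,c_0)$ holds as well.

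For the inclusion, fix $\vdelta\in\cC_{WRE}(s,c_0)$; using the equivalent description of this cone recorded in Section~\ref{sec:slope} (valid for the weights \eqref{eq:recommendation-weights-slope}), this means that
\[
    \sum_{j=1}^p\delta_j^\sharp\sqrt{\log(2p/j)}\le(1+c_0)\,\norma{\vdelta}_2\sqrt{\sum_{j=1}^s\log(2p/j)}.
\]
On the left-hand side, since $2p/j\ge 2$ for every $j\in\{1,\dots,p\}$ we have $\sqrt{\log(2p/j)}\ge\sqrt{\log 2}$, so the left-hand side is at least $\sqrt{\log 2}\,\norma{\vdelta}_1$. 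On the right-hand side, the Stirling-type bound \eqref{eq:algebra_stirling} gives $\sum_{j=1}^s\log(2p/j)\le s\log(2ep/s)$. Combining these two observations and dividing by $\sqrt{\log 2}$,
\[
    \norma{\vdelta}_1\le(1+c_0)\,\norma{\vdelta}_2\sqrt{\frac{s\log(2ep/s)}{\log 2}}\le(1+c_0)\sqrt{s_2}\,\norma{\vdelta}_2,
\]
where the last step is simply $s\log(2ep/s)/\log 2\le\lceil s\log(2ep/s)/\log 2\rceil=s_2$. Hence $\vdelta\in\cC_{SRE}(s_2,c_0)$, which is what we wanted.

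There is no genuine obstacle here: the whole argument rests on the uniform lower bound $\sqrt{\log 2}$ for the Slope weights (the common prefactor $A\sigma/\sqrt n$ cancels between the two sides of the cone-defining inequality), together with \eqref{eq:algebra_stirling}, and the ceiling in the definition of $s_2$ is exactly what lets the resulting quantity $s\log(2ep/s)/\log 2$ play the role of the sparsity parameter in the $SRE$ cone. The only mild subtlety is to work with the $WRE$ cone in its ``$\sum_j\delta_j^\sharp\sqrt{\log(2p/j)}$'' form rather than the generic $\norma{\cdot}_*$ form.
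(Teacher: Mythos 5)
Your proof is correct and follows essentially the same route as the paper's: the key step in both is to lower bound the weighted sum by its smallest weight (your $\sqrt{\log(2p/j)}\ge\sqrt{\log 2}$ is exactly the paper's $\lambda_j\ge\lambda_p$) and to upper bound $\sum_{j=1}^s\log(2p/j)$ via \eqref{eq:algebra_stirling}, yielding the cone inclusion $\cC_{WRE}(s,c_0)\subseteq\cC_{SRE}(s_2,c_0)$ and hence $\vartheta(s,c_0)\ge\theta(s_2,c_0)$. No gaps.
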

\begin{proof}
    If $\vdelta\in\cC_{WRE}(s,c_0)$, then
    \begin{equation*}
        (1+c_0) \left(\sum_{j=1}^s \lambda_j^2\right)^{1/2} \norma{\vdelta}_2
        \ge
        \norma{\vdelta}_*
        =
        \sum_{j=1}^p \lambda_j \delta_j^\sharp
        \ge
         {\lambda_p}
         \sum_{j=1}^p \delta_j^\sharp = \lambda_p \norma{\vdelta}_1.
    \end{equation*}
    This, together with \eqref{eq:recommendation-weights-slope} and \eqref{eq:algebra_stirling} imply
    $\norma{\vdelta}_1 \le ( 1 +  c_0) \sqrt{s\log(2ep/s)/\log 2} \norma{\vdelta}_2$.
    Thus, $\vdelta\in\cC_{SRE}(s_2,c_0)$.
\end{proof}

Proposition \ref{prop:wre} implies that, under the same assumptions as discussed in Subsection \ref{subsec:8.1}, for large classes of random matrices with i.i.d. rows,
condition $WRE(s,c_0)$ holds with high probability
whenever $s\log^2(ep/s)\le cn$ where $c>0$ is a constant.
This inequality on $s,p$ and $n$ differs from \eqref{eq:snp} only in  an extra logarithmic
factor.
Moreover, the next theorem shows that this extra
factor is not necessary if the row vectors of
$\design$ are sub-gaussian.

\begin{thm}
    \label{thm:design-random-matrices-slope}
    There exist absolute constants $C,C'>0$
    such that the following holds.
    Let $c_0,\kappa>0$ and let $s\in\{1,\dots,p\}$.
    Let $\vx_1,\dots,\vx_n$ be i.i.d. copies of a mean zero random variable $\vx$ valued in $\R^p$
    with covariance matrix $\Sigma = \E[\vx \vx^T]= (\Sigma_{ij})_{1\leq i,j\leq p}$.
    Let $L\ge 1$ and assume that $\vx$ is $L$-subgaussian in the sense that
    \begin{equation}
        \label{eq:sub-gauss}
                \E\exp\left(\vdelta^T \vx\right)
        \leq
        \exp\left( \frac{L^2 \norma{\Sigma^{1/2}\vdelta}_2^2}{2}\right), \quad \forall \vdelta\in\R^p.
       \end{equation}
    Assume that the covariance matrix $\Sigma$ satisfies
    \begin{equation}\label{eq:re_cov_mat}
        \max_{j=1,\dots,p} \Sigma_{jj} \le \frac{1}{2},
        \qquad
        \min_{\vdelta\in \cC_{WRE}(s,c_0):\vdelta\ne \vzero} \frac{\norma{\Sigma^{1/2}\vdelta}_2}{\norma{\vdelta}_2} \ge \kappa.
    \end{equation}
       Let $\design$ be the random matrix in $\R^{n\times p}$ with row vectors $\vx_1,\dots,\vx_n$.
    If
\begin{equation}
n\geq \frac{C L^4 (1+c_0)^2}{\kappa^2} s \log(2ep/s)
    \label{eq:assum-n-p-s-slope-random-design}
\end{equation}
    then, with probability at least $1-3\exp(-C'n/L^4)$ we have
    \begin{equation}\label{eq:main_sub_gauss_mat}
        \max_{j=1,\dots,p} \norm{\design \ve_j}_n^2 \le 1,
        \qquad
        \inf_{\vdelta\in \cC_{WRE}(s,c_0):\vdelta\ne \vzero} \frac{\norm{\design \vdelta}_n}{\norma{\vdelta}_2} \ge \frac{\kappa}{\sqrt{2}}.
    \end{equation}
\end{thm}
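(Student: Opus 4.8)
The plan is to prove the two assertions in \eqref{eq:main_sub_gauss_mat} separately, the normalization being routine and the lower isometry bound being the substance. For $\max_{j}\norm{\design\ve_j}_n^2\le1$: writing $\norm{\design\ve_j}_n^2=\frac1n\sum_{i=1}^n x_{ij}^2$ with $x_{ij}$ the $j$-th coordinate of $\vx_i$, assumption \eqref{eq:sub-gauss} with $\vdelta=t\ve_j$ shows $x_{ij}$ is $L\sqrt{\Sigma_{jj}}$-subgaussian with $\E x_{ij}^2=\Sigma_{jj}\le\tfrac12$, so $x_{ij}^2-\Sigma_{jj}$ is subexponential with $\psi_1$-norm $\lesssim L^2$; Bernstein's inequality gives $\Pro[\norm{\design\ve_j}_n^2>\Sigma_{jj}+\tfrac12]\le e^{-cn/L^4}$, and a union bound over $j=1,\dots,p$ — using that \eqref{eq:assum-n-p-s-slope-random-design} forces $n\gtrsim L^4\log p$, since $s\log(2ep/s)\ge\log p$ and $(1+c_0)/\kappa\ge1$ — yields $\Pro[\max_j\norm{\design\ve_j}_n^2>1]\le e^{-c'n/L^4}$.

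For the lower isometry bound I would pass, by homogeneity, to the \emph{normalized} set $\bar K=\{\vdelta/\norma{\Sigma^{1/2}\vdelta}_2:\vdelta\in\cC_{WRE}(s,c_0)\setminus\{\vzero\}\}$. Since $\cC_{WRE}(s,c_0)$ is a cone, $\bar K\subseteq\cC_{WRE}(s,c_0)$, every $\vdelta\in\bar K$ satisfies $\norma{\Sigma^{1/2}\vdelta}_2=1$, and the bound $\norm{\design\vdelta}_n^2\ge\tfrac{\kappa^2}{2}\norma{\vdelta}_2^2$ on $\cC_{WRE}(s,c_0)$ follows once $\inf_{\vdelta\in\bar K}\frac1n\sum_{i=1}^n\langle\vx_i,\vdelta\rangle^2\ge\tfrac12$. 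As $\E\langle\vx,\vdelta\rangle^2=\norma{\Sigma^{1/2}\vdelta}_2^2=1$ on $\bar K$, I would apply the standard two-sided deviation inequality for the quadratic process of an $L$-subgaussian vector (in the spirit of the small-ball method, see \cite{LM_reg_sparse} and Mendelson's work): since the radius $\sup_{\bar K}\norma{\Sigma^{1/2}\vdelta}_2$ equals $1$, with probability $\ge1-2e^{-u^2}$,
\[
\sup_{\vdelta\in\bar K}\Bigl|\tfrac1n\sum_{i=1}^n\langle\vx_i,\vdelta\rangle^2-1\Bigr|\ \lesssim\ L^2\Bigl(\tfrac{(W+u)^2}{n}+\tfrac{W+u}{\sqrt n}\Bigr),\qquad W:=\E\sup_{\vdelta\in\bar K}\langle\vh,\vdelta\rangle,\ \ \vh\sim\cN(\vzero,\Sigma).
\]
By monotonicity of the Gaussian width and $\bar K\subseteq\kappa^{-1}\bigl(\cC_{WRE}(s,c_0)\cap B_2^p\bigr)$, the next paragraph's estimate gives $W\le W_0$, where $W_0$ is a constant multiple of $(1+c_0)\kappa^{-1}\sqrt{s\log(2ep/s)}$. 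Choosing $u=W_0$, the right-hand side is $\lesssim L^2(W_0^2/n+W_0/\sqrt n)$, hence $\le\tfrac12$ once $n\ge C L^4 W_0^2$ with $C$ a large enough absolute constant; since $W_0^2\asymp(1+c_0)^2\kappa^{-2}s\log(2ep/s)$ this is exactly \eqref{eq:assum-n-p-s-slope-random-design}, and then $W_0^2\gtrsim n/L^4$, so the event has probability $\ge1-2e^{-cn/L^4}$. Intersecting with the first paragraph's event gives the overall probability $1-3e^{-C'n/L^4}$.

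Everything thus reduces to the Gaussian complexity estimate $\E\sup_{\vdelta\in\cC_{WRE}(s,c_0)\cap B_2^p}\langle\vh,\vdelta\rangle\le C(1+c_0)\sqrt{s\log(2ep/s)}$ for $\vh\sim\cN(\vzero,\Sigma)$ with $\max_j\Sigma_{jj}\le\tfrac12$, which I expect to be the main obstacle. Since $\cC_{WRE}(s,c_0)$ is invariant under rescaling the weights, I would take $\lambda_j=\sqrt{\log(2p/j)}$, so on this set $\norma{\vdelta}_*\le(1+c_0)\Lambda_s$ with $\Lambda_s:=(\sum_{j\le s}\log(2p/j))^{1/2}\le\sqrt{s\log(2ep/s)}$ by \eqref{eq:algebra_stirling}. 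Hölder's inequality for the sorted-$\ell_1$ norm gives $\langle\vh,\vdelta\rangle\le\norma{\vh}_*^{\circ}\norma{\vdelta}_*\le(1+c_0)\Lambda_s\norma{\vh}_*^{\circ}$, with $\norma{\vh}_*^{\circ}=\max_{1\le m\le p}\bigl(\sum_{j\le m}h_j^\sharp\bigr)\big/\bigl(\sum_{j\le m}\lambda_j\bigr)$, so it suffices to show $\E\norma{\vh}_*^{\circ}\le C$ for an absolute $C$. For this I would: (i) prove $\E\sum_{j\le m}(h_j^\sharp)^2\le2m\log(2ep/m)$ — split at the level $\tau=\sqrt{\log(2ep/m)}$ so that $\sum_{j\le m}(h_j^\sharp)^2\le m\tau^2+\sum_k h_k^2\,\1\{|h_k|>\tau\}$, and bound $\E[h_k^2\1\{|h_k|>\tau\}]\le2(\tau^2+1)e^{-\tau^2}$ using only that each $h_k$ is $1/\sqrt2$-subgaussian — whence $\E\sum_{j\le m}h_j^\sharp\le m\sqrt{2\log(2ep/m)}$ by Cauchy--Schwarz and Jensen, and since $\sum_{j\le m}\lambda_j\ge m\sqrt{\log(2p/m)}$ the $m$-th ratio has mean $\le C_1$; (ii) observe that $m\mapsto\sum_{j\le m}h_j^\sharp=\sup\{\langle\vh,\vu\rangle:\norma{\vu}_\infty\le1,\norma{\vu}_1\le m\}$, as a function of the standard Gaussian $\vg$ with $\vh=\Sigma^{1/2}\vg$, is Lipschitz with constant $\le m/\sqrt2$ because $\norma{\Sigma^{1/2}\vu}_2^2=\vu^T\Sigma\vu\le\tfrac12\norma{\vu}_1^2$, hence concentrates with sub-Gaussian parameter $\le m/\sqrt2$; (iii) peel over dyadic blocks $m\in[2^{\ell-1},2^\ell)$: on such a block the ratio is at most $\bigl(\sum_{j\le2^\ell}h_j^\sharp\bigr)\big/\bigl(\sum_{j\le2^{\ell-1}}\lambda_j\bigr)$, which has mean $\le C_1$ and sub-Gaussian deviation parameter $\lesssim(\log(2p/2^{\ell-1}))^{-1/2}$, so since $\log(2p/2^{\ell-1})$ grows geometrically as $\ell$ decreases, summing the tails over the $\le\lceil\log_2 p\rceil$ blocks still gives $\Pro[\norma{\vh}_*^{\circ}>C_1+t]\le2e^{-t^2/8}$ for $t\ge2$, hence $\E\norma{\vh}_*^{\circ}\le C$. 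This is precisely where the geometric structure of the weights is essential: a flat union bound over $m=1,\dots,p$ would cost a spurious $\sqrt{\log p}$ and destroy the rate. Finally, the radius bound used in the previous paragraph follows from Proposition~\ref{prop:wre}: $\cC_{WRE}(s,c_0)\subseteq\cC_{SRE}(s_2,c_0)$ with $s_2=\lceil s\log(2ep/s)/\log2\rceil$, so $\norma{\Sigma^{1/2}\vdelta}_2^2=\vdelta^T\Sigma\vdelta\le\tfrac12\norma{\vdelta}_1^2\le\tfrac12(1+c_0)^2s_2\norma{\vdelta}_2^2\lesssim(1+c_0)^2 s\log(2ep/s)\norma{\vdelta}_2^2$.

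One further point to track is that the final constant should come out as exactly $\kappa/\sqrt2$ rather than an $L$-dependent one; this is forced by working with $\bar K$, on which the relevant second moment is identically $1$, and by taking the absolute constant in \eqref{eq:assum-n-p-s-slope-random-design} large enough that the quadratic-process fluctuation does not exceed $\tfrac12$.
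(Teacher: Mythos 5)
Your overall architecture coincides with the paper's: Bernstein's inequality plus a union bound for $\max_j\norm{\design\ve_j}_n^2\le1$ (using $s\log(2ep/s)\ge\log p$ to absorb the union bound into the sample-size condition), and, for the lower isometry, a reduction to a two-sided deviation bound for the quadratic empirical process over the section $\{\vdelta\in\cC_{WRE}(s,c_0):\norma{\Sigma^{1/2}\vdelta}_2=1\}$, controlled by the Gaussian mean width of that set, which must be shown to be of order $(1+c_0)\kappa^{-1}\sqrt{s\log(2ep/s)}$. The one place where you genuinely diverge is the width estimate. The paper gets it almost for free from machinery it already has: Proposition~\ref{prop:event-slope} gives, with probability at least $1/2$, the simultaneous pointwise bounds $h_j^\sharp\le 4\sqrt{\log(2p/j)}$ for $\vh=\Sigma^{1/2}\vz$ (coordinate variances $\le 1$), whence the supremum over the section is at most $4r$ on that event; this bounds the \emph{median}, and Lemma~\ref{lemma:median-mean} converts median to mean at the cost of $\sqrt{\pi/2}$ (this is Lemma~\ref{lem:gauss_mean_width}). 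Your route --- bounding $\E\,\norma{\vh}_*^{\circ}$ for the dual ratio $\norma{\vh}_*^{\circ}=\max_m\bigl(\sum_{j\le m}h_j^\sharp\bigr)/\bigl(\sum_{j\le m}\lambda_j\bigr)$ via truncated second moments, Lipschitz concentration of the partial sorted sums, and dyadic peeling --- is correct (the Abel-summation duality $\langle\vh,\vdelta\rangle\le\norma{\vh}_*^{\circ}\norma{\vdelta}_*$ is valid, and your remark that a flat union bound over $m$ would cost a spurious $\sqrt{\log p}$ is exactly the point of the dyadic structure in both arguments), but it reproves from scratch what Proposition~\ref{prop:event-slope} already delivers.

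There is, however, one genuine error in your probability bookkeeping. You choose $u=W_0$ in the quadratic-process deviation inequality and then assert that \eqref{eq:assum-n-p-s-slope-random-design}, i.e.\ $n\ge CL^4W_0^2$, implies $W_0^2\gtrsim n/L^4$. The implication goes the wrong way: that hypothesis gives $W_0^2\le n/(CL^4)$, so the event you construct has probability only $1-2e^{-W_0^2}$, which for $n\gg L^4W_0^2$ is much weaker than the claimed $1-2e^{-cn/L^4}$ (for instance, with $s$ and $p$ fixed and $n\to\infty$, your failure probability does not decay with $n$ at all, whereas the theorem's must). The fix is to take $u\asymp\sqrt n/L^2$ instead, which dominates $W_0$ under \eqref{eq:assum-n-p-s-slope-random-design}: the contribution $L^2u/\sqrt n$ to the fluctuation is then an absolute constant that can be made at most $1/4$ by choosing the proportionality constant, the $W_0$-terms are handled exactly as you do, and the probability becomes $1-2\exp(-cn/L^4)$ uniformly in $n$. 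This is precisely the choice made in the paper's proof. With that correction your argument is complete.
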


\section{Extension to sub-gaussian noise}
\label{sec:extension_to_a_sub_gaussian_noise}

The goal of this section is to show that all results
of the present paper extend to subgaussian noise.
This is due to the following analog of Theorem \ref{thm:main-event}.

\begin{thm}
    \label{thm:main-event-subgaussian}
    Let $\delta_0\in(0,1)$.
    Assume that the components of  $\vxi=(\xi_1,\dots,\xi_n)$
    are independent, {with zero mean}, and subgaussian in the sense that, for some $\sigma>0$,
   \begin{equation}
    \label{eq:assum-subgaussian}
    \E \exp( {\xi_i^2}/{\sigma^2}) \le e,
    \qquad
    i=1,\dots,n.
\end{equation}
  {Let $\design\in \R^{n\times p}$ be any matrix such that $\max_{j=1,\dots,p}\norm{\design\ve_j}_n \le 1$.}  Then, with probability at least $1-\delta_0$
    we have, for all $\vu\in\R^p$,
    \begin{equation*}
    {
                \frac{1}{n} \vxi^T\design \vu
                \le
                40 \sigma
                \max\left(
                    \sum_{j=1}^p u_j^\sharp \sqrt{\frac{\log(2p/j)}{n}}
                    ,
                    \quad
                    \norm{\design\vu}_n
                    \frac{
                        \sqrt{\pi/2} + \sqrt{2\log(1/\delta_0)}
                    }{\sqrt n}
                \right)
                .
    }
    \end{equation*}
\end{thm}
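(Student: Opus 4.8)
The plan is to mimic the proof of Theorem~\ref{thm:main-event} given in Appendix~\ref{s:proof-main-event}, replacing the exact Gaussian tail and concentration estimates used there by their subgaussian counterparts; the larger numerical constant ($40$ instead of $4+\sqrt2$) and the extra $\sqrt{\pi/2}$ are precisely the cost of this substitution. First I would record the subgaussian inputs. Writing $\tfrac1n\vxi^T\design\vu=\tfrac1{\sqrt n}\langle\vg,\vu\rangle$ with $\vg$ as in \eqref{eq:def-g_j}, and noting $\|\vx_j\|_2=\sqrt n\,\norm{\design\ve_j}_n\le\sqrt n$, the assumption \eqref{eq:assum-subgaussian} makes each linear functional $\vw\mapsto\tfrac1{\sqrt n}(\design\vw)^T\vxi$ subgaussian with proxy a universal multiple of $\sigma\norm{\design\vw}_n$; in particular $\PP(|g_j|\ge\sigma u)\le 2e^{-cu^2}$ for a universal $c>0$, and $\vxi\mapsto f(\vxi)$ concentrates like a Gaussian with that proxy whenever $f$ is convex and Lipschitz. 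By positive homogeneity of both sides (the case $\design\vu=\vzero$ being trivial), it suffices to bound the left-hand side uniformly over $\vu$ with $\norm{\design\vu}_n\le1$, and after rescaling an arbitrary $\vu$ by $\norm{\design\vu}_n$ and using $a+b\le2\max(a,b)$, it is enough to show that with probability at least $1-\delta_0$,
\[
Z:=\sup_{\vu:\,\norm{\design\vu}_n\le1}\Bigl(\tfrac1n\vxi^T\design\vu-20\,\sigma\sum_{j=1}^p u_j^\sharp\sqrt{\tfrac{\log(2p/j)}{n}}\Bigr)\ \le\ \tfrac{20\,\sigma}{\sqrt n}\bigl(\sqrt{\pi/2}+\sqrt{2\log(1/\delta_0)}\bigr).
\]

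It then remains to bound $\E Z$ and to concentrate $Z$ around $\E Z$. For $\E Z$, I would split the contribution of each column $\vx_j$ according to whether $|g_j|=\tfrac1{\sqrt n}|\vx_j^T\vxi|$ exceeds a threshold of order $\sigma\sqrt{\log(2p/r_j)}$, $r_j$ being the rank of $|g_j|$ among $|g_1|,\dots,|g_p|$; the sub-threshold part is absorbed into the sorted-$\ell_1$ term (with room to spare, because of the factor $20$) via the rearrangement inequality and \eqref{eq:algebra_stirling}, while the super-threshold part involves only a small random number $m$ of columns, over which I would peel. On the event with at most $k$ exceedances, the contribution of the exceedance set $S$ is controlled using $\norm{\design\ve_j}_n\le1$ — which bounds the $k$-sparse operator norm of $\design$ by $\sqrt k$ (cf.~\eqref{SPD}) — together with the subgaussian tail of $\|\Pi_S\vxi\|_2$ (with $\Pi_S$ the projector onto the span of $\{\vx_j:j\in S\}$, of dimension $\le k$); summing the geometrically decaying tails over $k$ gives $\E Z\le 20\sigma\sqrt{\pi/2}/\sqrt n$, the constant $\sqrt{\pi/2}$ entering as an upper bound for $\E|Y|/\sigma$ when $Y$ is mean zero with $\E e^{Y^2/\sigma^2}\le e$. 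For the concentration, the map $\vxi\mapsto Z(\vxi)$ is a supremum of affine functions of $\vxi$, hence convex, and Lipschitz with constant $\sup_{\norm{\design\vu}_n\le1}\|\design\vu/n\|_2=\sup_{\norm{\design\vu}_n\le1}\norm{\design\vu}_n/\sqrt n\le1/\sqrt n$; the subgaussian analogue of the Gaussian concentration inequality used in Appendix~\ref{s:proof-main-event} then gives $\PP\bigl(Z\ge\E Z+\tfrac{20\sigma}{\sqrt n}\sqrt{2\log(1/\delta_0)}\bigr)\le\delta_0$, and combining the two bounds finishes the argument.

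The main obstacle, exactly as in the Gaussian case, is the uniformity over all of $\R^p$ together with the fact that the second term carries $\norm{\design\vu}_n$ rather than $\|\vu\|_2$, with no restricted-eigenvalue assumption on $\design$ available. This is why the sorted-$\ell_1$ term must be subtracted before taking the supremum (so that only a low-dimensional ``core'' survives) and why one peels over the number of exceedances of $\vg$ above the threshold, exploiting $\norm{\design\ve_j}_n\le1$ to control the sparse operator norm of $\design$ on the exceedance set. Keeping all the constants dimension-free — so that the second factor is $(\sqrt{\pi/2}+\sqrt{2\log(1/\delta_0)})/\sqrt n$ up to the universal $40$, rather than growing with $n$ or $p$ — is the delicate point of both the estimate on $\E Z$ and the concentration step.
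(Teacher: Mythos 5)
Your reduction to bounding the penalized supremum $Z$ over $\{\vu:\norm{\design\vu}_n\le 1\}$ is fine, and your diagnosis of the difficulties (uniformity over $\R^p$, the mismatch between $\norm{\design\vu}_n$ and $\norma{\vu}_2$) is accurate. But the load-bearing step of your plan --- ``the subgaussian analogue of the Gaussian concentration inequality then gives $\PP(Z\ge \E Z+\tfrac{20\sigma}{\sqrt n}\sqrt{2\log(1/\delta_0)})\le\delta_0$'' --- is not an available tool. Dimension-free concentration of (even convex) Lipschitz functions around their mean at scale $\sigma\cdot\mathrm{Lip}$ is a genuinely Gaussian phenomenon (Borell--Tsirelson--Ibragimov--Sudakov, which is inequality (1.4) of \cite{LT:91} used elsewhere in the paper); for products of unbounded subgaussian coordinates it fails in general, and what standard substitutes (Talagrand's convex-distance inequality, Adamczak-type bounds for suprema of linear processes) provide is either a boundedness requirement, a subexponential correction term in $\log(1/\delta_0)$, or deviation above a \emph{constant multiple} of the mean rather than above the mean itself. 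This is precisely why the statement carries the constant $40$ rather than something close to $4+\sqrt 2$: the paper's Proposition~\ref{prop:sub_gauss_conc} only asserts $\sup_{\vu\in U}\vxi^T\vu\le 8\sigma\,\E[\sup_{\vu\in U}\vz^T\vu]+8\sigma\sqrt{2x}$, i.e.\ domination by eight times the mean of the \emph{Gaussian} process plus a subgaussian term --- not concentration of the subgaussian supremum around its own mean.

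The missing idea is the mechanism by which the subgaussian process is compared to the Gaussian one: symmetrize ($\xi_i\mapsto 2\eps_i\xi_i$), use the tail domination $\PP(|\xi_i|>t)\le 4\,\PP(\sigma|z|>t)$ (Lemma~\ref{lemma:subgaussian-K=4}, a consequence of \eqref{eq:assum-subgaussian}), and apply the contraction principle to replace $\eps_i\xi_i$ by $4\sigma z_i$ inside the exponential moment of the supremum; only then does one invoke Gaussian concentration (to control that exponential moment) and the already-proved Gaussian median bound $\Med[\sup_{N(\vu)\le1}\frac1n(\sigma\vz)^T\design\vu]\le4$ from Proposition~\ref{prop:event-slope}. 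Once this comparison is in hand, your entire peeling argument for $\E Z$ (exceedance sets, sparse operator norms, projections $\Pi_S$) becomes unnecessary --- and as written it also has an unaddressed union bound over the $\binom{p}{k}$ possible exceedance sets, since the set $S$ is random. I would redirect the proof to establish the comparison-to-Gaussian step first; the rest then reduces to the work already done for Theorem~\ref{thm:main-event}.
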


Theorem \ref{thm:main-event-subgaussian} implies that
$
\frac{1}{n} \vxi^T\design \vu
                \le
                \max\left(
                    {\bar H}(\vu)
                    ,
                    {\bar G}(\vu)
                \right)
$
 with probability at least $1-\delta_0$, where ${\bar H}(\cdot)$ and ${\bar G}(\cdot)$ have the same form as ${H}(\cdot)$ and ${G}(\cdot)$ up to numerical constants.Thus, under the sub-gaussian assumption of Theorem \ref{thm:main-event-subgaussian}, all results of Sections \ref{sec:lasso-4} -- \ref{sec:slope} remain valid
up to differences in the numerical constants.

The proof of Theorem \ref{thm:main-event-subgaussian} relies on the following
deviation inequality, which is proved in \Cref{s:proof-main-event-subgaussian}
using symmetrization and contraction arguments.

\begin{proposition}
    \label{prop:sub_gauss_conc}
    Assume that the components of $\vxi$ are independent, {with zero mean},
    and satisfy \eqref{eq:assum-subgaussian}.
    Let $U\subseteq\{\vu\in\R^n: \norma{\vu}_2 \le 1 \}$
    be a subset of the unit ball.
    For any $x>0$,  with probability at least
    $1-\exp(-x)$ we have
    \begin{align}
        \label{eq:main_sub}
        \sup_{\vu\in U} \vxi^T\vu
        \leq
        8 \sigma  \E \left[  \sup_{\vu\in U} \vz^T\vu  \right] +  8 \sigma \sqrt{2x}
        \le
        8 \sigma \Med\left[ \sup_{\vu\in U} \vz^T\vu \right] +  8 \sigma(\sqrt{\pi/2} + \sqrt{2x}),
    \end{align}
    where $\vz$ is a standard normal $\mathcal N(\vzero,I_{n\times n})$ random vector.
\end{proposition}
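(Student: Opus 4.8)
The plan is to establish the Laplace-transform domination
$\E\exp\!\big(\lambda\sup_{\vu\in U}\vxi^T\vu\big)\le\E\exp\!\big(8\sigma\lambda\sup_{\vu\in U}\vz^T\vu\big)$ for every $\lambda>0$, and then to deduce both displayed inequalities from Gaussian concentration of $\vz\mapsto\sup_{\vu\in U}\vz^T\vu$. Once the domination is in hand, observe that $f(\vz)=\sup_{\vu\in U}\vz^T\vu$ is $1$-Lipschitz (since $\norma{\vu}_2\le1$ on $U$), so $\E\exp(8\sigma\lambda f(\vz))\le\exp\!\big(8\sigma\lambda\,\E[f(\vz)]+32\sigma^2\lambda^2\big)$; a Chernoff bound then gives $\PP\big(\sup_{\vu\in U}\vxi^T\vu\ge 8\sigma\,\E[\sup_{\vu\in U}\vz^T\vu]+t\big)\le\exp\!\big(-t^2/(128\sigma^2)\big)$, and taking $t=8\sigma\sqrt{2x}$ yields the first inequality. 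The uniform constant $8$ in front of both the $\E[\sup_{\vu\in U}\vz^T\vu]$ term and the $\sqrt{2x}$ term is exactly what this arithmetic (with $8^2/2=32$) produces.

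For the Laplace-transform bound I would argue by symmetrization and contraction. Let $(\eps_1,\dots,\eps_n)$ be a Rademacher vector independent of $\vxi$. The standard symmetrization inequality for the convex increasing function $e^{\lambda\,\cdot}$ gives $\E\exp(\lambda\sup_\vu\vxi^T\vu)\le\E\exp(2\lambda\sup_\vu\sum_i\eps_i\xi_iu_i)$, and $\eps_i\xi_i$ is symmetric with $\E\exp((\eps_i\xi_i)^2/\sigma^2)\le e$. Conditionally on $\vxi$, writing $\eps_i=\sqrt{\pi/2}\,\E_\vg[\eps_i|g_i|]$ with $\vg\sim\cN(\vzero,I_{n\times n})$, the trivial bound $\sup\E_\vg[\cdot]\le\E_\vg[\sup\cdot]$ together with Jensen's inequality pass from the Rademacher process to the Gaussian multiplier process (using $\eps_i|g_i|\stackrel{d}{=}g_i$): $\E\exp(2\lambda\sup_\vu\sum_i\eps_i\xi_iu_i)\le\E\exp\!\big(2\lambda\sqrt{\pi/2}\,\sup_\vu\sum_i g_i\xi_iu_i\big)$.

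It remains to strip the random multipliers $\xi_i$ from $\vu\mapsto\sum_i g_i\xi_iu_i$ and replace them by the deterministic scale $\sigma$, so that the whole chain lands on $c\sigma\,\vz^T\vu$ with $2\sqrt{\pi/2}\,c\le8$. This is the main obstacle: the naive bound $|\xi_i|\le\max_k|\xi_k|\asymp\sigma\sqrt{\log n}$ costs a spurious $\sqrt{\log n}$ factor, so one must instead split each $\xi_i$ along dyadic level sets $\{2^k\sigma<|\xi_i|\le2^{k+1}\sigma\}$, apply the Ledoux--Talagrand contraction principle on each truncated piece at scale $k$ (there the relevant map $t\mapsto(\xi_i I(|\xi_i|\le 2^{k+1}\sigma)/(2^{k+1}\sigma))\,t$ is a contraction), and sum the resulting geometric-type series; the fast decay $\PP(|\xi_i|>2^k\sigma)\le e\,e^{-4^k}$ furnished by the hypothesis $\E\exp(\xi_i^2/\sigma^2)\le e$ is what makes this sum converge with an absolute constant. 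Carefully tracking the numerical constants through the symmetrization (factor $2$), the Rademacher-to-Gaussian step (factor $\sqrt{\pi/2}$), and this peeling argument is what pins down the value $8$.

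Finally, the second displayed inequality is again pure Gaussian concentration: for any $1$-Lipschitz $h$ one has $\PP(h(\vz)-\Med[h(\vz)]>t)\le e^{-t^2/2}$, hence $\E[h(\vz)]\le\Med[h(\vz)]+\int_0^\infty e^{-t^2/2}\,dt=\Med[h(\vz)]+\sqrt{\pi/2}$; applying this to $h(\vz)=\sup_{\vu\in U}\vz^T\vu$ and multiplying by $8\sigma$ converts the first inequality into the second.
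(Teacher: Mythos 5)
Your overall architecture---MGF domination, then Gaussian concentration plus a Chernoff bound with variance proxy $(8\sigma)^2/2=32\sigma^2$, then the median-to-mean shift by $\sqrt{\pi/2}$---is exactly the paper's, and your first and last paragraphs are correct. The gap sits in the middle step, where you replace the symmetrized multipliers $\eps_i\xi_i$ by Gaussians, and it is genuine in two respects. First, the dyadic peeling does not close: the contraction principle applied to $t\mapsto \xi_i I(|\xi_i|\le 2^{k+1}\sigma)\,t/(2^{k+1}\sigma)$ uses only the bound $|\xi_i^{(k)}|\le 2^{k+1}\sigma$ and is blind to the fact that few coordinates are active at level $k$; it returns a term of order $2^{k+1}\sigma$ times the \emph{full} width of $U$ at every level, so the sum over $k$ diverges. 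If you instead exploit the sparsity of the active set $I_k=\{i:|\xi_i|>2^k\sigma\}$, the natural bound $\sup_{\vu\in U}\sum_{i\in I_k}\eps_i\xi_i u_i\le 2^{k+1}\sigma\sqrt{|I_k|}$ produces quantities of order $\sigma\sqrt{n e^{-4^k}}$ that are unrelated to $\E[\sup_{\vu\in U}\vz^T\vu]$ for a general subset $U$ of the unit ball. This is precisely the classical difficulty with multiplier processes, and the tail decay you invoke does not by itself resolve it.

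Second, and more decisively, the intermediate object you route through cannot carry the Chernoff argument at all. The product $g_i\xi_i$ of a standard Gaussian and an independent subgaussian variable is only sub-exponential: for $U=\{\ve_1\}$ and $\xi_1\sim\cN(0,\sigma^2/4)$ (which satisfies \eqref{eq:assum-subgaussian}) one has $\E\exp(\lambda g_1\xi_1)=\E\exp(\lambda^2\xi_1^2/2)=+\infty$ as soon as $\lambda\ge 2/\sigma$, whereas $\E\exp(C\sigma\lambda g_1)<\infty$ for every $\lambda$. So the MGF domination $\E\exp\bigl(c\lambda\sup_{\vu\in U}\sum_i g_i\xi_i u_i\bigr)\le\E\exp\bigl(C\sigma\lambda\sup_{\vu\in U}\vz^T\vu\bigr)$ is false for large $\lambda$, and the Chernoff optimization for the deviation $8\sigma\sqrt{2x}$ requires $\lambda\asymp\sqrt{x}/\sigma$, which exceeds the radius of convergence once $x$ exceeds an absolute constant---exactly the large-deviation regime the proposition addresses. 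The paper avoids both problems by never putting Gaussian multipliers on top of $\xi_i$: it proves the tail domination $\PP(|\xi_i|>t)\le 4\,\PP(\sigma|z|>t)$ (Lemma~\ref{lemma:subgaussian-K=4}) and applies the comparison form of the contraction principle (Ledoux--Talagrand, Lemma~4.6) to replace $\eps_i\xi_i$ directly by $4\sigma z_i$ inside any convex increasing function; together with the factor $2$ from symmetrization this gives the constant $8$ and an MGF bound valid for every $\lambda>0$. Substituting that single tail-comparison step for your Rademacher-to-Gaussian-multiplier passage and the peeling would repair the proof.
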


\appendix

\section{Preliminaries for the proofs}

\begin{lemma}
    \label{lemma:algebra-norm}
    Let $s\in\{1,\dots,p\}$ and $\tau\in[0,1]$.
    For any two $\vbeta,\hbeta\in\R^p$ such that $|\vbeta|_0\le s$  we have
    \begin{equation}\label{eq:algebra-norm}
        \tau \norma{\vu}_*
        +
        \norma{\vbeta}_*
        -
        \norma{\hbeta}_*
        \le
        (1+\tau)\left(\sum_{j=1}^s \lambda_j^2\right)^{1/2} \norma{\vu}_2
        -
        (1-\tau) \sum_{j=s+1}^p \lambda_j u_j^\sharp,
    \end{equation}
    where $\vu = \hbeta - \vbeta = (u_1,\dots,u_p)$
    and $(u_1^\sharp,\dots,u_p^\sharp)$ is a non-increasing rearrangement of
    $(|u_1|,\dots,|u_p|)$.
    If $\lambda_1=\dots=\lambda_p=\lambda$ for some $\lambda>0$,
    then $\norma{\cdot}_* = \lambda \norma{\cdot}_1$
    and \eqref{eq:algebra-norm} yields
    \begin{equation}
        \tau \lambda \norma{\vu}_1
        +
        \lambda \norma{\vbeta}_1
        -
        \lambda \norma{\hbeta}_1
        \le
        (1+\tau)\lambda\sqrt s \norma{\vu}_2
        -
        (1-\tau) \lambda \sum_{j=s+1}^p u_j^\sharp.
    \end{equation}
\end{lemma}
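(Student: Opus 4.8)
The plan is to reduce everything to the variational characterization \eqref{eq:maximal-rearrange} of the sorted-$\ell_1$ norm, combined with a sparsity-aware lower bound for $\norma{\hbeta}_*$. Set $\vu = \hbeta - \vbeta = (u_1,\dots,u_p)$. Since $\norma{\vbeta}_0 \le s$, I would fix a set $S\subseteq\{1,\dots,p\}$ with $|S| = s$ and $\supp(\vbeta)\subseteq S$ (enlarging $\supp(\vbeta)$ by indices where $\vbeta$ vanishes changes nothing). Write $\vu = \vu_S + \vu_{S^c}$; then $\vbeta + \vu_S$ is supported on $S$, hence its support is disjoint from that of $\vu_{S^c}$.

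First I would record the elementary inequality: if $a,b\in\R^p$ have disjoint supports and $\supp(a)\subseteq S$ with $|S| = s$, then $\norma{a+b}_* \ge \norma{a}_* + \sum_{j\ge 1}\lambda_{s+j}\,b_j^\sharp$. This follows from \eqref{eq:maximal-rearrange} by restricting the maximum over permutations to one particular permutation: it maps the first $s$ positions bijectively onto $S$ (so that, using $a+b = a$ on $S$, their $\lambda$-weighted contribution is at least $\norma{a}_*$) and maps the remaining positions onto $\supp(b)$ in non-increasing order of $|b|$ (using $a+b=b$ on $\supp(b)$). Applying this with $a = \vbeta + \vu_S$ and $b = \vu_{S^c}$, and then the triangle inequality $\norma{\vbeta + \vu_S}_* \ge \norma{\vbeta}_* - \norma{\vu_S}_*$, I obtain $\norma{\hbeta}_* \ge \norma{\vbeta}_* - \norma{\vu_S}_* + \sum_{j\ge 1}\lambda_{s+j}(u_S{}^c)_j^\sharp$, where $(u_{S^c})_j^\sharp$ is the non-increasing rearrangement of $|\vu_{S^c}|$.

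Next I would clean up the two remainder terms. By Cauchy--Schwarz, $\norma{\vu_S}_* = \sum_{j=1}^s\lambda_j (u_S)_j^\sharp \le \big(\sum_{j=1}^s\lambda_j^2\big)^{1/2}\norma{\vu_S}_2 \le \big(\sum_{j=1}^s\lambda_j^2\big)^{1/2}\norma{\vu}_2$. For the tail term I would use interlacing: zeroing out the $s$ coordinates in $S$ can only lower each order statistic, so $(u_{S^c})_j^\sharp \ge u_{s+j}^\sharp$ for every $j$, whence $\sum_{j\ge 1}\lambda_{s+j}(u_{S^c})_j^\sharp \ge \sum_{j\ge 1}\lambda_{s+j}u_{s+j}^\sharp = \sum_{j=s+1}^p \lambda_j u_j^\sharp$. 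Combining these gives $\norma{\vbeta}_* - \norma{\hbeta}_* \le \big(\sum_{j=1}^s\lambda_j^2\big)^{1/2}\norma{\vu}_2 - \sum_{j=s+1}^p\lambda_j u_j^\sharp$.

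Finally I would add $\tau\norma{\vu}_* = \tau\sum_{j=1}^s\lambda_j u_j^\sharp + \tau\sum_{j=s+1}^p\lambda_j u_j^\sharp$ to both sides and bound the head term $\tau\sum_{j=1}^s\lambda_j u_j^\sharp \le \tau\big(\sum_{j=1}^s\lambda_j^2\big)^{1/2}\norma{\vu}_2$ by Cauchy--Schwarz once more; collecting terms yields exactly \eqref{eq:algebra-norm}, and the sorted-$\ell_1$ specialization is immediate on setting $\lambda_1=\dots=\lambda_p=\lambda$. The one genuinely non-routine point is the first step: because $\norma{\cdot}_*$ is not decomposable one cannot simply split it additively over $S$ and $S^c$ as for $\ell_1$, and the two ingredients that circumvent this are the single-permutation lower bound on $\norma{a+b}_*$ for disjoint supports and the interlacing bound that converts the rearrangement of $\vu_{S^c}$ back into that of $\vu$.
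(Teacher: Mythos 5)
Your proof is correct and, despite the repackaging through a weak-decomposability inequality, a triangle inequality, and an interlacing bound, it is essentially the paper's argument: both hinge on lower-bounding $\norma{\hbeta}_*$ via \eqref{eq:maximal-rearrange} using the single permutation that enumerates $S$ on the first $s$ positions and sorts $|\vu|$ on $S^c$ thereafter (the paper's \eqref{eq:permutation-constraint}), and your interlacing step $(u_{S^c})_j^\sharp\ge u_{s+j}^\sharp$ is exactly the paper's implicit claim that $\sum_{j=s+1}^p\lambda_j u_j^\sharp\le\sum_{j=s+1}^p\lambda_j|u_{\phi(j)}|$. One small caveat: the hypothesis of your auxiliary inequality should be $\supp(b)\subseteq S^c$ rather than merely ``disjoint supports with $\supp(a)\subseteq S$'', since you use $a+b=a$ on $S$; this is harmless here because $b=\vu_{S^c}$ vanishes on $S$ by construction.
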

\begin{proof}
    Let $\phi$ be any permutation of $\{1,\dots,p\}$
    such that
    \begin{equation}
        \norma{\vbeta}_*
        = \sum_{j=1}^s \lambda_j | \beta_{\phi(j)} |
        \qquad
        \text{and}
        \qquad
        |u_{\phi(s+1)}|
        \ge
        |u_{\phi(s+2)}|
        \ge
        \dots
        \ge
        |u_{\phi(p)}|.
        \label{eq:permutation-constraint}
    \end{equation}
    By \eqref{eq:maximal-rearrange} applied to $\norma{\hbeta}_*$, we have
    \begin{align*}
        \norma{\vbeta}_*   - \norma{\hbeta}_*
        &\le
        \sum_{j=1}^s \lambda_j \left(|\beta_{\phi(j)}| - |\hat\beta_{\phi(j)}|\right)
        - \sum_{j=s+1}^p \lambda_j |\hat\beta_{\phi(j)}|
        ,\\
        &\le
        \sum_{j=1}^s \lambda_j |u_{\phi(j)}|
        - \sum_{j=s+1}^p \lambda_j |\hat\beta_{\phi(j)}|
        =
        \sum_{j=1}^s \lambda_j |u_{\phi(j)}|
        - \sum_{j=s+1}^p \lambda_j |u_{\phi(j)}|,
    \end{align*}
    since $u_{\phi(j)} = \hat\beta_{\phi(j)} - \beta_{\phi(j)}$ for $j=1,\dots,s$
    and $u_{\phi(j)} = \hat\beta_{\phi(j)}$ for all $j > s$. Since the sequence $\lambda_j$ is non-increasing we have
    $
    \sum_{j=1}^s \lambda_j |u_{\phi(j)}|
    \le
    \sum_{j=1}^s \lambda_j u_j^\sharp
    $.
    Next, the fact that permutation $\phi$ satisfies
    \eqref{eq:permutation-constraint} implies
    $
        \sum_{j=s+1}^p \lambda_j u_j^\sharp
        \le
        \sum_{j=s+1}^p \lambda_j |u_{\phi(j)}|
    $.
    Finally, $\sum_{j=1}^s \lambda_j u_j^\sharp \le (\sum_{j=1}^s\lambda_j^2)^{1/2} \norma{\vu}_2$ by the Cauchy-Schwarz inequality.
\end{proof}

\begin{lemma}
    Let $h:\R^p\rightarrow\R$ be a convex function,
    let $\vf, \vxi\in \R^n$, $\vy=\vf+\vxi$ and let $\design$ be any $n\times p$ matrix.
    If $\hbeta$ is a solution of the minimization problem
    $
        \min_{\vbeta\in\R^p} \left(
        \norm{\design\vbeta - \vy}^2_n + h(\vbeta)
        \right)
    $,
    then $\hbeta$ satisfies for all $\vbeta\in\R^p$
    \begin{equation}
        \norm{\design\hbeta - \vf}^2_n
        -
        \norm{\design\vbeta - \vf}^2_n
        \le
        \frac 2 n \vxi^T\design(\hbeta - \vbeta)
        + h(\vbeta)
        - h(\hbeta)
        - \norm{\design(\hbeta - \vbeta)}^2_n.
        \label{eq:almost-sure}
    \end{equation}
\end{lemma}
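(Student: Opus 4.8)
The plan is to exploit that the data-fidelity term $Q(\vbeta)\triangleq\norm{\design\vbeta-\vy}_n^2$ is an exact quadratic, so that optimality of $\hbeta$ forces a strong-convexity-type gain of order $\norm{\design(\hbeta-\vbeta)}_n^2$. First I would record the identity valid for every affine-quadratic map and every $t\in[0,1]$, obtained by expanding both sides with $u=\design\hbeta-\vy$, $v=\design\vbeta-\vy$:
\[
    Q\bigl((1-t)\hbeta+t\vbeta\bigr)=(1-t)Q(\hbeta)+tQ(\vbeta)-t(1-t)\norm{\design(\hbeta-\vbeta)}_n^2 .
\]
Combining this with the convexity bound $h\bigl((1-t)\hbeta+t\vbeta\bigr)\le(1-t)h(\hbeta)+th(\vbeta)$ and the minimality $F(\hbeta)\le F\bigl((1-t)\hbeta+t\vbeta\bigr)$ for $F\triangleq Q+h$, one gets, after cancelling $(1-t)F(\hbeta)$ and dividing by $t>0$,
\[
    F(\hbeta)\le F(\vbeta)-(1-t)\norm{\design(\hbeta-\vbeta)}_n^2 ,\qquad t\in(0,1],
\]
and letting $t\downarrow0$ gives $Q(\hbeta)-Q(\vbeta)\le h(\vbeta)-h(\hbeta)-\norm{\design(\hbeta-\vbeta)}_n^2$.

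Second, I would translate $Q(\hbeta)-Q(\vbeta)$ into prediction-error form using $\vy=\vf+\vxi$. Expanding $Q(\vbeta)=\norm{(\design\vbeta-\vf)-\vxi}_n^2=\norm{\design\vbeta-\vf}_n^2-\tfrac2n\vxi^T(\design\vbeta-\vf)+\norm{\vxi}_n^2$, and likewise for $\hbeta$, the terms $\norm{\vxi}_n^2$ cancel on subtraction and
\[
    Q(\hbeta)-Q(\vbeta)=\norm{\design\hbeta-\vf}_n^2-\norm{\design\vbeta-\vf}_n^2-\tfrac2n\vxi^T\design(\hbeta-\vbeta).
\]
Substituting this into the inequality from the first step and rearranging yields exactly \eqref{eq:almost-sure}.

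I do not anticipate any real obstacle; the only point requiring a little care is securing the full coefficient $1$ in front of $\norm{\design(\hbeta-\vbeta)}_n^2$ rather than the $\tfrac12$ that a naive midpoint ($t=\tfrac12$) argument produces — this is precisely what the limit $t\downarrow0$ along the segment $[\hbeta,\vbeta]$ provides, and is equivalent to invoking the first-order optimality condition $-\nabla Q(\hbeta)\in\partial h(\hbeta)$ together with the subgradient inequality for the convex function $h$. Everything else is elementary expansion of squared empirical norms.
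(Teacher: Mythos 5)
Your proof is correct. It reaches the same inequality as the paper but by a different, more elementary route: the paper invokes first-order optimality through subdifferential calculus --- by the Moreau--Rockafellar theorem it writes $\vzero=\nabla g(\hbeta)+\vv$ with $\vv\in\partial h(\hbeta)$, expands the quadratic difference exactly around $\hbeta$, and finishes with the subgradient inequality $(\vbeta-\hbeta)^T\vv\le h(\vbeta)-h(\hbeta)$ --- whereas you work directly along the segment $[\hbeta,\vbeta]$, using the exact parallelogram-type identity for the quadratic term, the convexity of $h$, the minimality of $\hbeta$, and the limit $t\downarrow0$. The two arguments extract the crucial extra term $-\norm{\design(\hbeta-\vbeta)}^2_n$ from the same source (the exact second-order behaviour of the quadratic data-fidelity term), and your closing remark correctly identifies the limit $t\downarrow0$ as equivalent to the first-order optimality condition. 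What your version buys is the avoidance of any subdifferential machinery (in particular the sum rule for subdifferentials, which the paper must justify via Moreau--Rockafellar); what the paper's version buys is a shorter, purely algebraic computation once the optimality condition is in hand. Your steps all check out: the quadratic identity, the cancellation of $(1-t)F(\hbeta)$ (legitimate since $h$ is real-valued so $F(\hbeta)$ is finite), the division by $t$, and the expansion of $Q(\hbeta)-Q(\vbeta)$ in which the $\norm{\vxi}_n^2$ terms cancel.
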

\begin{proof}
    Define the functions $f$ and $g$ by the relations $g(\vbeta) = \norm{\design\vbeta - \vy}^2_n$,
    and $f(\vbeta) = g(\vbeta) + h(\vbeta)$ for all $\vbeta\in\R^p$.
    Since $f$ is convex and $\hbeta$ is a minimizer of $f$, it follows that $\vzero$ belongs to the sub-differential of $f$ at $\hbeta$.
    By the Moreau-Rockafellar theorem, there exists $\vv$ in the sub-differential of $h$ at $\hbeta$ such that
    $
        \vzero = \nabla g (\hbeta) + \vv.
        $
    Here, $\nabla g (\hbeta)=\frac2n\design^T(\design \hbeta - \vy)$. Using these remarks and some algebra we obtain
    \begin{align*}
        &\norm{\design\hbeta - \vf}^2_n
        -
        \norm{\design\vbeta - \vf}^2_n
        =
        \frac{2}{n}(\hbeta - \vbeta)^T \design^T(\design \hbeta - \vf) - \norm{\design(\hbeta - \vbeta)}_n^2\\
         & = \frac{2}{n}(\hbeta - \vbeta)^T \design^T(\design \hbeta - \vf) - \norm{\design(\hbeta - \vbeta)}_n^2\
        + (\vbeta - \hbeta)^T(\nabla g(\hbeta) + \vv),\\
        &=
        \frac 2 n \vxi^T\design(\hbeta - \vbeta)
        -
        \norm{\design(\hbeta - \vbeta)}^2_n
        + (\vbeta - \hbeta)^T\vv.
    \end{align*}
    To complete the proof, notice that by definition of the subdifferential of $h$ at $\hbeta$,
    we have
      $  (\vbeta-\hbeta)^T\vv \le h(\vbeta) - h(\hbeta).$
\end{proof}

{\begin{lemma}
 \label{lemma:median-mean}
Let $\vz\sim \mathcal N(\vzero, I_{p\times p})$, and let $f:\R^p\to \R$ be a 1-Lipschitz function, that is, $|f(\vu) - f(\vu')| \le \norma{\vu - \vu'}_2$ for any $\vu,\vu'\in\R^p$. Then,    $|\Med[f(\vz)] - \E[f(\vz)]| \le \sqrt{\pi/2}$.
\end{lemma}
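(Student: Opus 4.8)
The plan is to combine a dimension-free comparison between mean and median of a square-integrable random variable with the Gaussian Poincar\'e inequality. Write $\mu=\E[f(\vz)]$ and let $m=\Med[f(\vz)]$. The core claim is that
$
    |\mu-m| \le \bigl(\var f(\vz)\bigr)^{1/2}
$
for any such random variable. To prove it I would chain three one-line estimates: first, $|\mu-m| = |\E[f(\vz)-m]| \le \E|f(\vz)-m|$ by Jensen; second, $\E|f(\vz)-m| \le \E|f(\vz)-\mu|$, because the mean absolute deviation $a\mapsto\E|f(\vz)-a|$ is minimized at any median of $f(\vz)$ (a classical fact, since its one-sided derivatives are $\Pro(f(\vz)<a)-\Pro(f(\vz)>a)$); third, $\E|f(\vz)-\mu| \le \bigl(\E[(f(\vz)-\mu)^2]\bigr)^{1/2} = \bigl(\var f(\vz)\bigr)^{1/2}$ by Cauchy--Schwarz.

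It then remains to show $\var f(\vz) \le 1$, which is exactly the Gaussian Poincar\'e inequality applied to a $1$-Lipschitz function: for smooth $g$ one has $\var g(\vz) \le \E\bigl[\norma{\nabla g(\vz)}_2^2\bigr]$, and $1$-Lipschitzness forces $\norma{\nabla g}_2\le 1$ almost everywhere, hence $\var g(\vz)\le 1$. Combining with the previous paragraph gives $|\mu-m| \le 1 \le \sqrt{\pi/2}$, which is the assertion of the lemma. (Running the same scheme through the Gaussian isoperimetric inequality instead of Poincar\'e — applied to the half-space-like sets $\{f\le m\}$ and $\{f\ge m\}$, which yields $\Pro(f(\vz)>m+t)\le 1-\Phi(t)$ and $\Pro(f(\vz)<m-t)\le 1-\Phi(t)$, and then integrating $t\ge 0$ — actually produces the sharper bound $|\mu-m|\le 1/\sqrt{2\pi}$; but the constant $\sqrt{\pi/2}$ is all that is used in Proposition~\ref{prop:sub_gauss_conc}.)

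The only genuine technical point is that $f$ is assumed merely $1$-Lipschitz, not differentiable, so the Poincar\'e inequality is not directly applicable in its smooth form. I would handle this by mollification: set $f_\eps = f * \gamma_\eps$ with $\gamma_\eps$ a Gaussian kernel of scale $\eps$. Then $f_\eps$ is smooth and still $1$-Lipschitz, so $\norma{\nabla f_\eps}_2\le 1$ and $\var f_\eps(\vz)\le 1$; moreover $\norma{f_\eps-f}_\infty \le c\,\eps\to 0$ for a dimensional constant $c$, which forces $f_\eps(\vz)\to f(\vz)$ in $L^2$ with respect to the standard Gaussian measure, hence $\var f_\eps(\vz)\to\var f(\vz)$ and therefore $\var f(\vz)\le 1$. (Alternatively, one may simply invoke the Lipschitz version of the Gaussian Poincar\'e inequality, which is standard.) I expect no further obstacle, as all remaining steps are immediate.
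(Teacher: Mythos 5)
Your proof is correct, and it even yields the stronger constant $1$ in place of $\sqrt{\pi/2}$, since $1\le\sqrt{\pi/2}$. It does, however, take a genuinely different route from the paper's. The paper does not argue from scratch: it cites the discussion after (1.6) in Ledoux--Talagrand, where the bound comes from Gaussian concentration around the \emph{median} -- isoperimetry gives $\Pro(|f(\vz)-\Med f(\vz)|>t)\le e^{-t^2/2}$, and integrating the tail yields $|\E f(\vz)-\Med f(\vz)|\le\E|f(\vz)-\Med f(\vz)|\le\int_0^\infty e^{-t^2/2}\,dt=\sqrt{\pi/2}$, which is exactly where the constant in the lemma statement comes from. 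You instead combine the distribution-free inequality $|\E X-\Med X|\le(\var X)^{1/2}$ (via the fact that the median minimizes mean absolute deviation, plus Cauchy--Schwarz) with the Gaussian Poincar\'e inequality for Lipschitz functions; the mollification step you sketch to pass from smooth to merely Lipschitz $f$ is standard and fine. What your approach buys is a sharper and cleaner dimension-free constant and independence from isoperimetry; what the paper's citation buys is that the same concentration input ($\Pro(|f-\Med f|>t)\le e^{-t^2/2}$, i.e.\ inequality (1.4) of Ledoux--Talagrand) is already used elsewhere in the paper, so no additional tool is introduced. Either way the lemma holds as stated.
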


This lemma is proved in the discussion after equation~(1.6) in \cite[page 21]{LT:91}.
 }


\section{Proofs for the Lasso estimator}
\label{sec:proof-lasso}


\begin{proof}[Proof of Theorem \ref{thm:lasso-main-event}]
    Using inequality
    \eqref{eq:almost-sure}  with $h(\cdot) = 2 \lambda \norma{\cdot}_1$ we get that, almost surely, for all $\vbeta\in \R^p$ and all $\vf\in \R^n$,
    \begin{equation}
    \label{eq:first_eq_theo3}
        2\tau \lambda \norma{\hbeta - \vbeta}_1
        +
        \norm{\design\hbeta - \vf}^2_n
        \le
        \norm{\design\vbeta - \vf}^2_n
         - \norm{\design(\hbeta-\vbeta)}_n^2 + \triangle^*
    \end{equation}
    where
    \begin{equation*}
    \triangle^*
    \triangleq
        2 \tau \lambda \norma{\hbeta-\vbeta}_1
        +
        \frac 2 n \vxi^T\design(\hbeta - \vbeta)
        + 2 \lambda\norma{\vbeta}_1
        - 2 \lambda\norma{\hbeta}_1.
    \end{equation*}
    Let $\vu= \hbeta - \vbeta$
    and assume that $\norma{\vbeta}_0\le s$.
    Define
    \begin{equation}
        \tilde H(\vu)
        \triangleq
        \frac{{(4+\sqrt 2)} \sigma}{\sqrt n}
        \left(
            \norma{\vu}_2
            \left(\sum_{j=1}^s\log(2p/j)\right)^{1/2}
            + \sum_{j=s+1}^p u_j^\sharp \sqrt{\log(2p/j)}
        \right).
        \label{eq:def-tilde-H}
    \end{equation}
    Using the Cauchy-Schwarz inequality, it is easy to see that
       \begin{equation}\label{eq:hh}
        H(\vu)
        \le
        \tilde H(\vu) \le \gamma \lambda \left(\sqrt s \norma{\vu}_2 + \sum_{j=s+1}^p u_j^\sharp\right)
            \triangleq F(\vu),
        \qquad
        \forall \vu\in\R^p,
    \end{equation}
     where $H(\cdot)$ is defined in \eqref{eq:def-H-G}, and the last inequality follows from   \eqref{eq:algebra_stirling}
    and \eqref{eq:tuning-lambda}.

    On the event \eqref{eq:main-event},
    using \eqref{eq:hh} and Lemma
    \ref{lemma:algebra-norm} we obtain
    \begin{align*}
        \triangle^*
        &\le 2 \lambda
        \left(
            \tau \norma{\hbeta-\vbeta}_1
            + \norma{\vbeta}_1
            - \norma{\hbeta}_1
        \right)
        + 2 \max(H(\vu), G(\vu)),
        \\
        &\le 2 \lambda
        \left(
            (1+\tau)\sqrt{s}\norma{\vu}_2
            - (1-\tau) \sum_{j=s+1}^p u_j^\sharp
        \right)
        + 2 \max(F(\vu), G(\vu)).
    \end{align*}
    By definition of $\delta(\lambda)$, we have
    $$G(\vu) = \lambda \sqrt s \gamma \sqrt{\log({1}/\delta_0)/(s\log({1}/\delta(\lambda)))} \norm{\design\vu}_n.$$
    We now consider the following two cases.
    \begin{itemize}
        \item[(i)]  {\it Case $G(\vu)>F(\vu)$.}
            Then,
            \begin{equation}
                \norma{\vu}_2
                \le
                \sqrt{\frac{\log({1}/\delta_0)}{s\log({1}/\delta(\lambda))} } \norm{\design\vu}_n.
                \label{eq:case-isometry}
            \end{equation}
            Thus
            \begin{align}
                \triangle^*
                &\le
                2 \lambda
                (1+\tau)\sqrt s \norma{\vu}_2
                + 2 G(\vu),
                \nonumber
                \\
                &\le
                2 \lambda \sqrt s (1+\tau+\gamma) \sqrt{\frac{\log({1}/\delta_0)}{s\log({1}/\delta(\lambda) )}} \norm{\design\vu}_n,
                \nonumber
                \\
                &\le
                \lambda^2 s (1+\tau+\gamma)^2 \frac{\log({1}/\delta_0)}{s\log({1}/\delta(\lambda))} + \norm{\design\vu}_n^2.
                \label{eq:case-prediction}
            \end{align}

        \item[(ii)]{\it
            Case $G(\vu)\le F(\vu)$.}
            In this case, we get
            \begin{align*}
                    \triangle^*
                    &\le
                    2 \lambda
                    \left(
                        (1 + \gamma + \tau)\sqrt s \norma{\vu}_2
                        -
                        (1 - \gamma - \tau)\sum_{j=s+1}^p u_j^\sharp
                    \right)
                    \triangleq
                    \triangle
                    .
            \end{align*}
            If $\triangle > 0$, then  $\vu$ belongs to the cone $\cC_{SRE}(s,c_0)$ and
             we can use the $SRE(s,c_0)$ condition, which yields $|\vu|_2\le \norm{\design \vu}_n/\theta(s,c_0)$. Therefore,
            \begin{eqnarray}
                \triangle^*
                \le
                \triangle
                &\le&
                    \frac{2(1+\gamma +\tau)\lambda \sqrt s}{\theta(s,c_0)}
                \norm{\design \vu }_n
                \le \left(
                    \frac{(1+ \gamma +\tau)\lambda \sqrt s}{\theta(s,c_0)}
                \right)^2
                + \norm{\design \vu}_n^2.
                \label{eq:case2-prediction}
            \end{eqnarray}
            If $\triangle \le 0$, then \eqref{eq:case2-prediction} holds trivially.
       \end{itemize}
       Combining \eqref{eq:case-prediction} and \eqref{eq:case2-prediction} with \eqref{eq:first_eq_theo3} completes the proof of  \eqref{eq:soi-lasso1}.

    Let now $\vf = \design\vbeta^*$ for some $\vbeta^*\in\R^p$ with $\norma{\vbeta^*}_0 \le s$.
Then, \eqref{eq:soi-lasso1} with $\vbeta=\vbeta^*$ implies
\begin{equation}
 \label{eq:estimation-ell1-lasso}
        2\tau \norma{\hbeta - \vbeta^*}_1
        \le
        C_{\gamma,\tau}(s,\lambda,\delta_0)\lambda s.
\end{equation}
Next, we show that
\begin{equation}
\label{eq:estimation-ell2-lasso}
        \norma{\hbeta - \vbeta^*}_2
        \le
        \frac{C_{\gamma,0}(s,\lambda,\delta_0)}{1+ \gamma} \frac{\lambda \sqrt s}{\theta^2(s,\frac{1+\gamma}{1-\gamma})}  .
\end{equation}
To prove
    \eqref{eq:estimation-ell2-lasso},
    we take $\tau=0$,  and consider the cases (i) and (ii) as above with $\vu=\hbeta-\vbeta^*$.
    \begin{itemize}
        \item If $G(\vu)>F(\vu)$, then from \eqref{eq:first_eq_theo3} and \eqref{eq:case-prediction} with $\tau=0$ and $\vbeta=\vbeta^*$ we get 
            \begin{equation*}
                                \norm{\design(\hbeta-\vbeta^*)}_n^2
               \le \lambda^2(1+\gamma)^2
               \frac{\log({1}/\delta_0)}{\log({1}/\delta(\lambda))}
.
            \end{equation*}
            This and \eqref{eq:case-isometry} imply
            \begin{equation}\label{eq:gf}
                \norma{\hbeta - \vbeta^*}_2
                \le
                                (1+\gamma)\lambda\sqrt s
                \left(
                    \frac{\log(1/\delta_0)}{s\log(1/\delta(\lambda))}
                \right).
            \end{equation}
        \item If $G(\vu)\le F(\vu)$, then it follows from \eqref{eq:first_eq_theo3} with $\vbeta=\vbeta^*$ that
    $ \triangle^*\ge 0$ almost surely, and thus
            $\triangle\ge \triangle^* \ge 0$. Hence,
            $\vu=\hbeta - \vbeta^*\in\cC_{SRE}(s,\frac{1+\gamma}{1-\gamma})$.
            Thus, we can apply the $SRE(s,\frac{1+\gamma}{1-\gamma})$
            condition, which yields 
            \begin{equation}\label{eq:fg}
                \norma{\hbeta - \vbeta^*}_2
                \le
                \frac{\norm{\design\vu}_n}{\theta(s,\frac{1+ \gamma}{1-\gamma})}
                \le
                \frac{(1+ \gamma) \lambda \sqrt s}{\theta^2(s,\frac{1+ \gamma}{1-\gamma})}
            \end{equation}
            where the second inequality is due to the combination of \eqref{eq:first_eq_theo3} and \eqref{eq:case2-prediction} with $\vbeta=\vbeta^*$, $\tau=0$.
     \end{itemize}
     Putting together \eqref{eq:gf} and \eqref{eq:fg} proves \eqref{eq:estimation-ell2-lasso}.
           To conclude, it is enough to notice that $\theta^2(s,\frac{1+ \gamma}{1-\gamma})\geq \theta^2(s,c_0)$ and then to bound $\norma{\hbeta - \vbeta^*}_q$ from above using \eqref{eq:estimation-ell1-lasso}, \eqref{eq:estimation-ell2-lasso} and the norm
     interpolation inequality
     $\norma{\hbeta - \vbeta^*}_q\le \norma{\hbeta - \vbeta^*}_1^{2/q-1}\norma{\hbeta - \vbeta^*}_2^{2-2/q}.$
\end{proof}

\begin{proof}[Proof of Corollary \ref{cor:lasso-E}]
    Let $\gamma=1/2$, $\tau=1/4$, and let $\delta_0^*$ be defined in \eqref{eq:choice-explicit-delta0}. Set
        \begin{align*}
        Z\triangleq
        \frac{16\log(2ep/s)}{49\lambda^2 }
        \sup_{\vbeta\in\R^p:\norma{\vbeta}_0\le s}
        \left(
            \frac \lambda 2 \norma{\hbeta - \vbeta}_1
            +
            \norm{\design\hbeta - \vf}^2_n
            -
            \norm{\design\vbeta - \vf}^2_n
        \right).
           \end{align*}
    For all $\delta_0\in(0,\delta_0^*]$,
    by Theorem \ref{thm:lasso-main-event} we have $Z\le
        \log(1/\delta_0)$
    with probability at least $1-{\delta_0/2}$.
    That is, $\mathbb P(Z> t) \le {\frac{e^{-t}}{2}}$
    for all $t\ge T\triangleq \log(1/\delta_0^*)=\frac{s \log(2ep/s)}{\theta^2(s,7)}$.
    By integration,
    \begin{equation*}
        \E[Z]
        \le
        \int_0^{\infty}\mathbb P(Z > t) dt
        \le T + {\int_{T}^{\infty} \frac{e^{-t}}{2}} dt
        \le T + {\frac 1 2},
    \end{equation*}
    which yields
    \begin{multline*}
        \E \sup_{\vbeta\in\R^p:\norma{\vbeta}_0\le s}
        \left(
            \frac \lambda 2 \norma{\hbeta - \vbeta}_1
            +
            \norm{\design\hbeta - \vf}^2_n
            -
            \norm{\design\vbeta - \vf}^2_n
        \right)
        \\
        \le \frac{49}{16} \left( \frac{\lambda^2 s}{\theta^2(s,7)}
        + \frac{ \lambda^2 s}{ 2 s\log(2ep/s)}\right)
        \le
        \frac{49}{16}\left( \frac{\lambda^2 s}{\theta^2(s,7)}
        + \frac{\lambda^2 s}{ {2} \log(2ep)}\right).
    \end{multline*}
    This completes the proof of \eqref{eq:soi-expectation-lasso}.

    Let now $\vf = \design\vbeta^*$ for some $\vbeta^*\in\R^p$ with $\norma{\vbeta^*}_0 \le s$.
    For all $t\ge T = \log(2/\delta_0^*)$, by Theorem \ref{thm:lasso-main-event} we have
    \begin{equation*}
        Z_q \triangleq
        \frac{8s\log(2ep/s)}{49\lambda s^{1/q}}
        \norma{\hbeta-\vbeta^*}_q
        \le
        t
    \end{equation*}
    with probability at least $1-\frac{e^{-t}}{2}$.
    To prove \eqref{eq:esimtation-ell2-expectation-lasso}, it remains to note that
    \begin{equation*}
        \E[Z_q^q]
        = \int_0^{\infty} qt^{q-1} \; \mathbb P(Z_q>t) dt
        \le
        \int_0^T qt^{q-1} dt
        + \int_T^{\infty} \frac{qt^{q-1} e^{-t}}{{2}} dt
        \le
        T^q
        {+ \frac{q}{2} \Gamma(q)}
        \le T^q + {1}.
    \end{equation*}
\end{proof}


\section{Lasso with adaptive choice of $\lambda$}
\label{sec:proof-lepski}

\begin{proof}[Proof of \Cref{th:lepski}]
    In this proof, we set for brevity $\Pro = \Pro_{\vbeta^*}$.
    Fix $s\in [1,s_*]$ and assume that $\vbeta^*\in B_0(s)$. If $s<b_M$, let $m_0$ be the index such that $b_{m_0}$ is the minimal element greater than $s$ in the collection $\{{b_m}, m=2,\dots, M\}$. Then, $b_{m_0-1}\le s < b_{m_0}$. If $s\in [b_M, s_*]$, set $m_0=M$.
    For any $a>0$ we have
    \begin{align}\label{th:lepski:eq2}
        \Pro \left( d(\boldsymbol{\tilde\beta} , \vbeta^*)
        \ge a \right) \le
        \Pro \left(d(\boldsymbol{\tilde\beta},  \vbeta^*)
        \ge a, \hat m \le m_0 \right) +  \Pro \left(\hat m \ge m_0+1 \right).
    \end{align}
    On the event $\{\hat m \le m_0\}$ we have
    \begin{align}\label{th:lepski:eq3}
        d(\hbeta_{b_{\hat m}} , \hbeta_{b_{m_0}})
        & \le
        \sum_{k=\hat m+1}^{m_0}
        d(\hbeta_{b_{k}} , \hbeta_{b_{k-1}})
        \le
        2C_0\sigma\sum_{k=\hat m+1}^{m_0}   \sqrt{\frac{b_k\log({2}ep/b_k)}{n}}
        \\
        & \le
        2C_0\sigma\sum_{k=2}^{m_0}   \sqrt{\frac{b_k\log({2}ep/b_k)}{n}}
        \le
        c'C_0\sigma   \sqrt{\frac{b_{m_0}\log({2}ep/b_{m_0})}{n}}
    \end{align}
    where $c'>2$ is an absolute constant. We deduce that, on the event $\{\hat m \le m_0\}$,
    \begin{eqnarray}\label{th:lepski:eq4}
        d(\boldsymbol{\tilde\beta} , \vbeta^*) &\le& d(\hbeta_{b_{\hat m}}, \hbeta_{b_{m_0}}) +
        d(\hbeta_{m_0}, \vbeta^*)
        \le
        c'w(b_{m_0})  + d(\hbeta_{m_0}, \vbeta^*)\,.
    \end{eqnarray}
    The following two cases are possible.
    \begin{itemize}
        \item[(i)] {\it Case} $s<b_M$. Then, by definition of $m_0$ we have $b_{m_0}/2=b_{m_0-1}\le s < b_{m_0}$. Since the function $w(\cdot)$ is increasing on $[1,p]$, we easily deduce that
            $$
            w(b_{m_0})/\sqrt{2} \le w(s)\le w(b_{m_0}).
            $$
        \item[(ii)] {\it Case} $s\in [b_M, s_*]$. Then, $b_{m_0}=b_M \le s$, while $s\le 2b_M$. Therefore, in this case $b_{m_0}\le s < 2b_{m_0}$, which implies
            $$
            w(b_{m_0}) \le w(s)\le \sqrt{2} \,w(b_{m_0}).
            $$
    \end{itemize}
    In both cases  (i) and (ii), we have
    $
    w(s)\ge w(b_{m_0})/\sqrt{2}
    .$ This remark and the fact that \eqref{th:lepski:eq4} holds on the event $\{\hat m \le m_0\}$ imply
    \begin{eqnarray}\nonumber
        \Pro \left(d(\boldsymbol{\tilde\beta},\vbeta^*) \ge {\sqrt{2}}({\sqrt{2}}+c') w(s) , \ \hat m \le m_0\right)&\le&
        \Pro \left(d(\boldsymbol{\tilde\beta},\vbeta^*) \ge ({\sqrt{2}}+c') w(b_{m_0}) , \ \hat m \le m_0\right) \\
        &\le&\Pro \left(d(\hbeta_{m_0},\vbeta^*)\ge  {\sqrt{2}}\,w(b_{m_0})        \right) .   \label{from_corollary0}
    \end{eqnarray}
    Next, in both cases (i) and (ii), we have $s\le 2b_{m_0}$, which implies that $\vbeta^* \in B_0(2b_{m_0})$.
    Using this fact together with \eqref{from_corollary0} and  \eqref{from_corollary_times2} we obtain
    \begin{eqnarray}\nonumber
        &&\sup_{\vbeta^*\in B_0(s)}\Pro \left(d(\boldsymbol{\tilde\beta},\vbeta^*) \ge {\sqrt{2}}({\sqrt{2}}+c') w(s) , \ \hat m \le m_0\right)
        \\
        && \quad
        \le
        \sup_{\vbeta^*\in B_0(2b_{m_0})} \Pro \left(d(\hbeta_{m_0},\vbeta^*)\ge  {\sqrt{2}}\,w(b_{m_0})\right)
        \le \left(\frac {2b_{m_0}}{ p}\right)^{2b_{m_0}} \le \left(\frac {2s}{ p}\right)^{2s}\label{from_corollary1}
    \end{eqnarray}
    where we have used that the function $b\mapsto \left(b/ p\right)^{b}$ is decreasing on the interval $[1,p/e]$ and,  in both cases (i) and (ii), $2b_{m_0}\le 2s_*\le p/e$.

    We now estimate the probability $\Pro \left(\hat m \ge m_0+1 \right)$. We have
    \begin{align*}
        \Pro \left(\hat m \ge m_0+1 \right)& \le \sum_{m=m_0+1}^M \Pro \left(\hat m = m\right).
    \end{align*}
    Now, from the definition of $\hat m$ we obtain
    \begin{align*}
        \Pro \left(\hat m = m\right)&\le \sum_{k=m}^M \Pro\left(d(\hbeta_{b_k},\hbeta_{b_{k-1}}) > 2w(b_k)\right) \\
                                    &\le
        \sum_{k=m}^M \Pro\left(d(\hbeta_{b_k},\vbeta^*) > w(b_k)\right)
        +
        \sum_{k=m}^M \Pro\left(d(\hbeta_{b_{k-1}}, \vbeta^*) > w(b_k)\right)
        \\
        &\le
        2\sum_{k=m-1}^M \Pro\left(d(\hbeta_{b_k},\vbeta^*) > w(b_k)\right)\triangleq 2\sum_{k=m-1}^M a_k,
    \end{align*}
    where we have used that $b_k>b_{k-1}$ and the monotonicity of $w(\cdot)$. Thus,
    \begin{equation}\label{doublsum}
        \Pro \left(\hat m \ge m_0+1 \right) \le  2\sum_{m=m_0+1}^M\sum_{k=m-1}^M a_k
        %
    \end{equation}
    The double sum in \eqref{doublsum} is non-zero only if $m_0<M$. This implies that $s<b_{m_0}$, and hence $\vbeta^* \in B_0(b_{m_0})\subset B_0(b_{k})$ for all $k\ge m_0$. Therefore, using \eqref{from_corollary} we obtain
    \begin{equation}
        \sup_{\vbeta^*\in B_0(s)} \Pro \left(\hat m \ge m_0+1 \right) \le 2\sum_{m=m_0+1}^M\sum_{k=m-1}^M \sup_{\vbeta^*\in B_0(b_{k})} a_k \le 2 M^2
        \max_{k\ge m_0} \left(\frac {b_{k}}{ p}\right)^{b_{k}}.
    \end{equation}
    Recall that
    $M\le \log_2(2p)$. Note also that the function $b\mapsto \left(\frac {b}{ p}\right)^{b}$ is decreasing on the interval $[1,p/e]$, while $b_j\le s_*$  for all $j$ and $s_*\le p/e$ by assumption. Finally, in both cases (i) and (ii), $b_{m_0}\le 2s$. Using these remarks we get
    \begin{align}\label{kk}
        \sup_{\vbeta^*\in B_0(s)}  \Pro \left(\hat m \ge m_0+1 \right)& \le  2 M^2 \left(\frac {b_{m_0}}{ p}\right)^{b_{m_0}}
        \le 2 (\log_2(p))^2 \left(\frac {2s}{ p}\right)^{2s}.
    \end{align}
    Combining this bound with \eqref{from_corollary1} and \eqref{th:lepski:eq2} where we set $a={\sqrt{2}}({\sqrt{2}}+c') w(s)$ proves \eqref{th:lepski:eq1}. Finally, inequality \eqref{th:lepski:s} follows from \eqref{kk} and the relations
    $\{{\hat s}\le s\}= \{b_{\hat m-1}\le s\}=\{b_{\hat m}/2\le s\}\supseteq \{b_{\hat m}\le b_{m_0}\} =\{\hat m \le m_0\}$.
\end{proof}
\vspace{-8mm}
{
\begin{proof}[Proof of \Cref{th:lepski-bis}]
The constant $C_0$ in \Cref{th:lepski-bis} is chosen to satisfy $C_0=\sqrt{2} C_0'$. Thus,  inequalities \eqref{from_corollary_times3}  and  \eqref{from_corollary_times4} imply the inequalities of the same form as \eqref{from_corollary}  and  \eqref{from_corollary_times2}.
Note that $w(b) = C_0\sigma b^{1/q}\sqrt{\frac{\log(2ep/b)}{n}}$ is increasing in $b$ for $b\in[1,p]$. Note also that \eqref{th:lepski:eq3} remains valid if we replace there the expressions of the form $\sqrt{b \log(2ep/b)}$ by $ b^{1/q}\sqrt{\log(2ep/b)}$. Using these remarks, we obtain the result of \Cref{th:lepski-bis} by repeating, with minor modifications, the proof of \Cref{th:lepski} if we set $
    d(\vbeta, \vbeta')= \norma{\vbeta - \vbeta'}_q, \ \forall \vbeta, \vbeta'\in \R^p,
    $
 and replace the references \eqref{from_corollary}  and  \eqref{from_corollary_times2} with \eqref{from_corollary_times3}  and  \eqref{from_corollary_times4}, respectively. We omit further details.
    \end{proof}
}


\section{Proofs for the Slope estimator}
\label{sec:proof-slope}

\begin{proof}[Proof of Theorem \ref{th:slope}] 
    By
    \eqref{eq:almost-sure} with $h(\cdot) = 2 \norma{\cdot}_*$, we have that
    almost surely, for all $\vbeta\in \R^p$ and all $\vf\in \R^n$,
    \begin{equation}\label{D1}
        2\tau \norma{\hbeta - \vbeta}_*
        +
        \norm{\design\hbeta - \vf}^2_n
        \le
        \norm{\design\vbeta - \vf}^2_n
         - \norm{\design(\hbeta-\vbeta)}_n^2+ \triangle^*
    \end{equation}
    where $\triangle^*\triangleq
        2 \tau \norma{\hbeta-\vbeta}_*
        +
        \frac 2 n \vxi^T\design(\hbeta - \vbeta)
        + 2 \norma{\vbeta}_*
        - 2 \norma{\hbeta}_*$.
    Let $\vu = \hbeta - \vbeta$.
   Now consider the event \eqref{eq:main-event}
    and the quantities $H(\vu)$ and $G(\vu)$ defined in
    \eqref{eq:def-H-G}.
    On the event \eqref{eq:main-event},
    using \eqref{eq:hh} and
    Lemma \ref{lemma:algebra-norm} we obtain
    \begin{align*}
        \triangle^*
        &\le 2
        \left(
            \tau \norma{\hbeta-\vbeta}_*
            + \norma{\vbeta}_*
            - \norma{\hbeta}_*
        \right)
        + 2 \max(H(\vu), G(\vu)),
        \\
        &\le 2
        \left(
            (1+\tau)\norma{\vu}_2 \Lambda(s)
            - (1-\tau) \sum_{j=s+1}^p \lambda_j u_j^\sharp
        \right)
        + 2 \max(\tilde H(\vu), G(\vu)),
    \end{align*}
    where $\tilde H(\cdot)$ is defined in \eqref{eq:def-tilde-H}
    and $\Lambda(s) = (\sum_{j=1}^s \lambda_j^2)^{1/2}$.
    We now consider the following two cases.
    \begin{itemize}
        \item[(i)]  {\it Case $\tilde H(\vu)\le G(\vu)$.}
            In this case, the definition of $\tilde H$ (cf. \eqref{eq:def-tilde-H}),
            implies
            \begin{equation}
                {
                \norma{\vu}_2
                \le \frac{ G(\vu)}{ (4+\sqrt 2)(\sum_{j=1}^s\log(2p/j))^{1/2} }
                \le \norm{\design\vu}_n \sqrt{\frac{\log(1/\delta_0)}{s\log(2p/s)}}
                .
                }
                \label{eq:bound-on-G-slope-case1}
            \end{equation}
            where we used \eqref{eq:algebra_stirling} for the second inequality.
            As the weights \eqref{eq:recommendation-weights-slope} and the constant
            $A$ satisfies \eqref{eq:condition-A}, by \eqref{eq:algebra_stirling} we also have
            $G(\vu) \le \gamma \Lambda(s) \sqrt{\log({1}/\delta_0)/(s\log(2p/s))}\norm{\design\vu}_n$.
            Using \eqref{eq:bound-on-G-slope-case1} we obtain
            \begin{align}
                \triangle^*
                &\le
                2 (1+\tau)\Lambda(s) \norma{\vu}_2
                + 2 G(\vu)
                \nonumber
                \\
                &\le
                2(1+\gamma+\tau) \Lambda(s) \sqrt{\log({1}/\delta_0)/(s\log(2p/s))} \norm{\design\vu}_n,
                \nonumber
                \\
                &\le
                (1+\gamma+\tau)^2 \Lambda^2(s) \left(\frac{\log({1}/\delta_0)}{s\log(2p/s)}\right)
                + \norm{\design\vu}_n^2.
                \label{eq:bound-prediction-trianglestar-slope-case2}
            \end{align}
        \item[(ii)]  {\it Case $\tilde H(\vu)>G(\vu)$.}
            In this case,
            \begin{equation*}
                \triangle^*
                \le
                2 (1+\gamma+\tau)\norma{\vu}_2 \Lambda(s)
                -
                2 (1-\gamma-\tau)\sum_{j=s+1}^p \lambda_j u_j^\sharp
                \triangleq\triangle
                .
            \end{equation*}
            If $\triangle\le 0$ then \eqref{eq:soi-slope} holds trivially in view of \eqref{D1}.
            If $\triangle>0$, then $\vu$ belongs to the
            cone $\cC_{WRE}(s,c_0)$, and we can use
            the $WRE(s,c_0)$ condition, which yields
            \begin{align}\label{eq:ccc}
                 \triangle^* \le \triangle
                \le
                2 (1+\gamma+\tau)
                \Lambda(s)
                \norma{\vu}_2
                &\le
                \frac{2 (1+\gamma+\tau) \Lambda(s) \norm{\design\vu}_n }{\vartheta(s,c_0)}\nonumber
                 \\
                &\le
                \frac{(1+\gamma+\tau)^2 \Lambda^2(s)}{\vartheta^2(s,c_0)}
                + \norm{\design\vu}_n^2.
            \end{align}
    \end{itemize}
    Combining the last inequality with \eqref{eq:bound-prediction-trianglestar-slope-case2} and \eqref{D1} completes the proof of  \eqref{eq:soi-slope}.

    Let now $\vf = \design\vbeta^*$ for some $\vbeta^*\in\R^p$ with $\norma{\vbeta^*}_0 \le s$.
    Then, \eqref{D1} implies that
    $\triangle^*\ge 0$ almost surely.
    The estimation bound \eqref{eq:estimation-well-specified-slope} is
    a direct consequence of \eqref{eq:soi-slope}.
    To prove \eqref{eq:esimtation-ell2-slope},
    we set in what follows $\vu=\hbeta-\vbeta^*$, $\tau=0$ so that $c_0 = \frac{1+\gamma}{1-\gamma}$ and consider
    the same two cases as above.
    \begin{itemize}
        \item If $\tilde H(\vu)\le G(\vu)$, then \eqref{eq:bound-on-G-slope-case1} holds.
            Combining this bound 
            with \eqref{D1} -- \eqref{eq:bound-prediction-trianglestar-slope-case2} we conclude that \eqref{eq:esimtation-ell2-slope} is satisfied in this case.
        \item If $\tilde H(\vu)>G(\vu)$,
            then $\triangle\ge \triangle^*$. It follows from \eqref{D1} with $\vbeta=\vbeta^*$ that $\triangle^*\ge 2\norm{\design\vu}_n^2$. Thus, $\triangle\ge 0$.
            Therefore, $\vu$ belongs to the cone $\cC_{WRE}(s,\frac{1+\gamma}{1-\gamma})$, and we can apply
            the $WRE(s,\frac{1+\gamma}{1-\gamma})$ condition, which yields
            $\norma{\vu}_2 \le\norm{\design\vu}_n/\vartheta(s,\frac{1+\gamma}{1-\gamma})$.
            Combining this bound with the inequality $2\norm{\design\vu}_n^2 \le\triangle $ and \eqref{eq:ccc}
            we obtain that \eqref{eq:esimtation-ell2-slope} is satisfied.
    \end{itemize}
\end{proof}


\section{Bound on the stochastic error}
\label{s:proof-main-event}

Here, we prove Theorem \ref{thm:main-event}. The proof is based on a sequence of propositions.

\begin{proposition}
    \label{prop:event-lasso}
    Let $g_1,\dots,g_p$ be zero-mean Gaussian random variables
    with variance at most $\sigma^2$.
    Denote by $(g_1^\sharp,\dots,g_p^\sharp)$ be a non-increasing
    rearrangement of $(|g_1|,\dots,|g_p|)$.
    Then
    \begin{equation*}
        \mathbb P
        \left(
            \frac 1 {s\sigma^2} \sum_{j=1}^s (g_j^\sharp)^2
            >
            t \log\Big(\frac{2p}{s}\Big)
        \right) \le
        \left(\frac{2p}{s}\right)^{1  - \frac{3t} 8}
    \end{equation*}
    for all $t>0$ and $ s\in\{1,\dots,p\}$.
\end{proposition}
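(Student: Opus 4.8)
The plan is to dominate $\sum_{j=1}^s(g_j^\sharp)^2$ by a truncated sum and then apply Markov's inequality, using only the marginal law of each $g_i$, so that no independence or joint normality of $g_1,\dots,g_p$ is needed. If $t\le 8/3$ there is nothing to prove, since then $(2p/s)^{1-3t/8}\ge 1$; so I would assume $t>8/3$. Set $u\triangleq t s\sigma^2\log(2p/s)$ and fix the truncation level $v\triangleq \sigma\sqrt{\tfrac34 t\log(2p/s)}$, so that with $a\triangleq v/\sigma$ one has $a^2=\tfrac34 t\log(2p/s)>\tfrac34\cdot\tfrac83\log 2=2\log 2>1$, and $e^{-a^2/2}=(2p/s)^{-3t/8}$.

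First I would record the deterministic bound
\[
\sum_{j=1}^s (g_j^\sharp)^2
\le
\sum_{j=1}^s\Bigl(v^2+\bigl((g_j^\sharp)^2-v^2\bigr)_+\Bigr)
=
s v^2+\sum_{j=1}^s\bigl((g_j^\sharp)^2-v^2\bigr)_+
\le
s v^2+\sum_{i=1}^p\bigl(g_i^2-v^2\bigr)_+ ,
\]
where the last inequality holds because the numbers $(g_j^\sharp)^2$, $j\le s$, are $s$ of the $g_1^2,\dots,g_p^2$ and $x\mapsto(x-v^2)_+\ge 0$. Since $sv^2=\tfrac34 u$, the event $\{\sum_{j\le s}(g_j^\sharp)^2>u\}$ is contained in $\{\sum_{i=1}^p(g_i^2-v^2)_+>u/4\}$, whence by Markov's inequality
\[
\Pro\Bigl(\sum_{j=1}^s (g_j^\sharp)^2>u\Bigr)\le \frac{4}{u}\sum_{i=1}^p\E\bigl[(g_i^2-v^2)_+\bigr].
\]

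Next I would estimate the moments. Coupling $g_i=\sigma_i W$ with $W\sim\mathcal N(\vzero,1)$ and using $\sigma_i\le\sigma$ pointwise gives $\E[(g_i^2-v^2)_+]\le \sigma^2\E[(W^2-a^2)_+]$; integration by parts against the standard density $\phi$ yields the exact identity $\E[(W^2-a^2)_+]=2\bigl(a\phi(a)-(a^2-1)\bar\Phi(a)\bigr)$, where $\bar\Phi$ is the Gaussian survival function. Since $a>1$ and $\bar\Phi(a)\ge \tfrac{a}{a^2+1}\phi(a)$, this is at most $\tfrac{4a}{a^2+1}\phi(a)=\tfrac{4a}{(a^2+1)\sqrt{2\pi}}(2p/s)^{-3t/8}$. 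Plugging this in, writing $\tau\triangleq t\log(2p/s)$ (so $a=\tfrac{\sqrt3}{2}\sqrt\tau$, $a^2+1=\tfrac34\tau+1$, $p=\tfrac s2\cdot\tfrac{2p}{s}$, $u=s\tau\sigma^2$), everything collapses to
\[
\Pro\Bigl(\sum_{j=1}^s (g_j^\sharp)^2>u\Bigr)
\le
\frac{4\sqrt 3}{\sqrt{2\pi}\,\sqrt\tau\,\bigl(\tfrac34\tau+1\bigr)}\,
\Bigl(\frac{2p}{s}\Bigr)^{1-3t/8}.
\]
Finally I would check that the prefactor does not exceed $1$: the map $\tau\mapsto\sqrt\tau(\tfrac34\tau+1)$ is increasing, and at the smallest admissible value $\tau>\tfrac83\log 2$ it already exceeds $\tfrac{4\sqrt3}{\sqrt{2\pi}}=\sqrt{24/\pi}$, which is exactly what is required.

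The delicate point is that the constant $3/8$ is essentially tight, so every estimate must be carried out without slack. Concretely, the truncation level must be taken to be $v^2=\tfrac34 t\sigma^2\log(2p/s)$ (the factor $3/4$ producing the exponent $3/8=\tfrac12\cdot\tfrac34$), and the moment $\E[(W^2-a^2)_+]$ must be controlled through its exact value together with the sharp Mills lower bound $\bar\Phi(a)\ge\frac{a}{a^2+1}\phi(a)$; the cruder bound $\E[(W^2-a^2)_+]\le 2a\phi(a)$ would produce a prefactor that fails to be $\le 1$ in the corner where $2p/s$ is close to $2$ and $t$ is close to $8/3$. A secondary point worth emphasizing is that the only probabilistic input is a first-moment bound on $(g_i^2-v^2)_+$ depending on the marginal law of $g_i$ alone, so the argument remains valid for arbitrarily dependent (and not necessarily jointly Gaussian) $g_1,\dots,g_p$, as the statement demands.
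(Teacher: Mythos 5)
Your proof is correct, but it takes a genuinely different route from the paper's. The paper's argument is a three-line exponential-moment computation: by convexity of $\exp$,
\[
\E \exp\Bigl(\tfrac{3}{8s\sigma^2}\textstyle\sum_{j=1}^s(g_j^\sharp)^2\Bigr)
\le\frac1s\sum_{j=1}^s\E\exp\Bigl(\tfrac{3(g_j^\sharp)^2}{8\sigma^2}\Bigr)
\le\frac1s\sum_{j=1}^p\E\exp\Bigl(\tfrac{3g_j^2}{8\sigma^2}\Bigr)
\le\frac{2p}{s},
\]
using $\E[e^{3\eta^2/8}]=(1-3/4)^{-1/2}=2$ for $\eta\sim\cN(0,1)$, and the Chernoff bound then gives exactly $(2p/s)^{1-3t/8}$. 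You replace the moment generating function by a truncation at level $v^2=\tfrac34 t\sigma^2\log(2p/s)$, Markov's inequality applied to the excess mass $\sum_i(g_i^2-v^2)_+$, and the exact formula for $\E[(W^2-a^2)_+]$ sharpened by the Mills-ratio lower bound $\bar\Phi(a)\ge\frac{a}{a^2+1}\phi(a)$. Both arguments share the key rearrangement step (bounding the top-$s$ sum of a nonnegative increasing functional by the full sum over all $p$ coordinates) and both use only the marginal laws, so neither requires independence or joint Gaussianity --- the robustness you emphasize at the end is equally a feature of the paper's proof. What your route buys is that it needs only a first-moment bound on the truncated variables rather than a finite exponential moment at the specific parameter $3/(8\sigma^2)$, which would matter for heavier-tailed marginals; what it costs is the delicate constant-chasing you correctly flag (the cruder bound $\E[(W^2-a^2)_+]\le 2a\phi(a)$ does fail by a hair near $\tau=\tfrac83\log 2$). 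I checked the closing numerics: at $\tau=\tfrac83\log2$ one has $\sqrt\tau\,(\tfrac34\tau+1)\approx 3.24>\sqrt{24/\pi}\approx 2.76$, and the left-hand side is increasing in $\tau$, so your prefactor is indeed at most $1$ on the whole admissible range.
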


\begin{proof}
  By Jensen's inequality we have
    \begin{equation}\label{expmom}
        \E \exp\left(
            \frac 3 {8s\sigma^2} \sum_{j=1}^s (g_j^\sharp)^2
        \right)
        \le
        \frac 1 s \sum_{j=1}^s \E \exp\left( \frac{3(g_j^\sharp)^2}{8\sigma^2} \right)
        \le
        \frac 1 s \sum_{j=1}^p \E \exp\left( \frac{3g_j^2}{8\sigma^2}\right)
        \le \frac {2p} s
    \end{equation}
    where we have used
    the fact that $\E[\exp( 3\eta^2/8)] = 2$ when $\eta\sim\cN(0, 1)$.
    The Chernoff bound completes the proof.
\end{proof}

\begin{proposition}
    \label{prop:event-slope} Under the assumptions of Proposition \ref{prop:event-lasso},
    \begin{equation}
        \Pro\left(
            \max_{j=1,\dots,p}
            \;
            \frac{g_j^\sharp}{\sigma \sqrt{\log(2p/j)}}
            \le
            4
        \right) \ge \frac 1 2.
    \end{equation}
\end{proposition}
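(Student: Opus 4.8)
The plan is to deduce this from the $\ell_2$-type tail bound of Proposition~\ref{prop:event-lasso} via a dyadic union bound over the index $j$. The key elementary fact is that, since $(g_1^\sharp,\dots,g_p^\sharp)$ is non-increasing, for every $j$ one has $(g_j^\sharp)^2 \le \frac1j\sum_{i=1}^j (g_i^\sharp)^2$, and $g_{j'}^\sharp \le g_j^\sharp$ whenever $j' \ge j$. Hence controlling the averages $\frac1{s\sigma^2}\sum_{i=1}^s (g_i^\sharp)^2$ for $s$ ranging over a short (dyadic) list of values already controls $g_j^\sharp$ simultaneously over all $j\in\{1,\dots,p\}$.

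Concretely, I would set $L = \lfloor \log_2 p \rfloor$ and, for each $k\in\{0,\dots,L\}$, apply Proposition~\ref{prop:event-lasso} with $s = 2^k$ and $t = 8$. Since $1 - 3t/8 = -2$, this gives that the event
\[
    A_k \triangleq \Bigl\{ \textstyle\sum_{i=1}^{2^k} (g_i^\sharp)^2 \le 8\cdot 2^k\,\sigma^2 \log(2p/2^k) \Bigr\}
\]
has probability at least $1 - (2^k/(2p))^2$. I then claim that on $\bigcap_{k=0}^L A_k$ we have $g_j^\sharp \le 4\sigma\sqrt{\log(2p/j)}$ for every $j\in\{1,\dots,p\}$. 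To verify this, fix $j$ and split into two cases. If $j \le 2^L$, pick the minimal $k$ with $2^k \ge j$; then $k\le L$ and $2^k < 2j$, so on $A_k$,
\[
    (g_j^\sharp)^2 \le \frac1j \sum_{i=1}^j (g_i^\sharp)^2 \le \frac1j\sum_{i=1}^{2^k}(g_i^\sharp)^2 \le \frac{8\cdot 2^k}{j}\,\sigma^2\log(2p/2^k) \le 16\,\sigma^2\log(2p/j),
\]
using $2^k/j < 2$ and $\log(2p/2^k)\le \log(2p/j)$. If instead $2^L < j \le p$, then $j < 2^{L+1}$, and on $A_L$,
\[
    (g_j^\sharp)^2 \le (g_{2^L}^\sharp)^2 \le 8\,\sigma^2\log(2p/2^L) \le 16\,\sigma^2\log(2p/j),
\]
where the last inequality uses $\log(2p/2^L) \le \log(2p/j) + \log 2 \le 2\log(2p/j)$, valid because $2p/j \ge 2$.

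Finally, a union bound gives
\[
    \Pro\Bigl( \max_{j=1,\dots,p} \frac{g_j^\sharp}{\sigma\sqrt{\log(2p/j)}} > 4 \Bigr)
    \le \sum_{k=0}^L \Bigl(\frac{2^k}{2p}\Bigr)^2
    = \frac{4^{L+1}-1}{12 p^2}
    < \frac{4^{L+1}}{12 p^2}
    = \frac{(2^L)^2}{3 p^2}
    \le \frac13,
\]
which proves the statement (indeed with $\tfrac12$ improved to $\tfrac23$). I expect the only delicate point to be the bookkeeping of the numerical constants: there is a single factor-$2$ loss coming from replacing $j$ by its nearest dyadic neighbour — appearing as $2^k/j < 2$ in the first case and as $\log(2p/2^L) \le 2\log(2p/j)$ in the second — and one must check that, together with the choice $t=8$ in Proposition~\ref{prop:event-lasso}, this produces exactly $16 = 4^2$ on the right-hand side while still leaving the exponent $1 - 3t/8 = -2$ negative enough for the geometric sum over $k$ to stay below $1/2$. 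Everything else is routine.
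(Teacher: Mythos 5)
Your proof is correct and follows essentially the same route as the paper's: both reduce $g_j^\sharp$ to the running average via $(g_j^\sharp)^2 \le \frac1j\sum_{i\le j}(g_i^\sharp)^2$, apply Proposition~\ref{prop:event-lasso} at dyadic values of $s$, take a union bound whose geometric sum stays below $1/2$, and interpolate to general $j$ by monotonicity at the cost of a bounded factor absorbed into the constant $4$. The only difference is bookkeeping: the paper takes $t=16/3$ (exponent $-1$, sum $\le 1/2$) and places the rounding loss as $\tfrac{4}{\sqrt3}\sqrt{3\log(2p/j)}$, while you take $t=8$ (exponent $-2$, sum $\le 1/3$) and place it as $\sqrt{8\cdot 2}=4$; both are valid.
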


\begin{proof}
    Proposition
     \ref{prop:event-lasso} with $t = 16 / 3$,
    and the inequality
    $(g_j^\sharp)^2 \le \frac 1 j \sum_{k=1}^j (g_k^\sharp)^2$
    imply
    \begin{equation}\label{eq:aa}
        \Pro\left(
            (g_j^\sharp)^2 \le \frac{16 \sigma^2}{3} \log(2p/j)
        \right)
        \ge 1 - \frac{j}{2p}, \quad j=1,\dots,p.
    \end{equation}
    Let $q\ge 0$ be the integer such that $2^q\le p < 2^{q+1}$. Applying \eqref{eq:aa} to $j=2^l$ for $l=0,\dots,q-1,$ and using
    the union bound, we obtain that the event
    \begin{equation*}
        \Omega_0 \triangleq
        \left\{
            \max_{l=0,\dots,q-1}
                          \frac{g_{2^l}^\sharp \sqrt 3}{4 \sigma \sqrt{\log(2p/2^l)}}
                       \le 1
        \right\}
    \end{equation*}
    satisfies
    $\Pro(\Omega_0)
        \ge 1 - \sum_{l=0}^{q-1} \frac{2^l}{2p}
        = 1 - \frac{2^{q}-1}{2p}
        \ge  1/2.
    $
     For any $j < 2^{q}$,
    there exists $l\in\{0,\dots,q-1\}$ such that $2^l\le j <2^{l+1}$
    and thus, on the event $\Omega_0$,
    \begin{equation*}
        g_j^\sharp
        \le g_{2^l}^\sharp
        \le
        \frac{4 \sigma}{\sqrt 3} \sqrt{\log(2p/2^l)}
        \le
        \frac{4 \sigma}{\sqrt 3} \sqrt{\log(4p/j)}
        \le
        4 \sigma \sqrt{\log(2p/j)}, \quad \forall \ j < 2^{q}.
    \end{equation*}
    Next, for $2^{q}\le j\le p$ we have
    \begin{equation*}
        g_j^\sharp
        \le g_{2^{q-1}}^\sharp
        \le
        \frac{4 \sigma}{\sqrt 3} \sqrt{\log(2p/2^{q-1})}
        <
        \frac{4 \sigma}{\sqrt 3} \sqrt{\log(8p/j)}
        \le
        4 \sigma \sqrt{\log(2p/j)}.
    \end{equation*}
    Thus, on the event $\Omega_0$ we have
    $
        g_j^\sharp
        \le
        4 \sigma \sqrt{\log(2p/j)}
   $
    for all $j=1,\dots,p$.
\end{proof}


A function $N:\R^p\rightarrow [0,\infty)$ will be called {positive} homogeneous if
$N(a\vu) = a N(\vu)$ for all $a\geq0, \vu\in\R^p$
    and $N(\vu)>0$ for $\vu\ne \vzero$.

{
\begin{proposition}
    \label{prop:deviation-abstract-norm-N}
    Let $\delta_0\in(0,1)$.
    Assume that $\vxi\sim \mathcal N(\vzero,\sigma^2 I_{n\times n})$.
    Let $N:\R^p\rightarrow [0,+\infty)$ be a {positive} homogeneous function.    Assume that the event
    \begin{equation*}
        \Omega_4 \triangleq \left\{
                \sup_{\vv\in\R^p: N(\vv) {\le} 1} \frac 1 n  \vxi^T\design\vv
                \le 4
        \right\}
    \end{equation*}
    satisfies $\mathbb P(\Omega_4) \ge 1/2$.
    Then for all $\delta_0\in(0,1)$ we have
    \begin{equation*}
        \mathbb P\left(
            \forall \vu\in\R^p:
            \quad
        \frac 1 n \vxi^T\design\vu
        \le (4+\sqrt 2) \max\left(N(\vu), \norm{\design\vu}_n \sigma \sqrt{\frac{\log(1/\delta_0)}{n}} \right)
        \right)
        \ge 1 - \delta_0/2.
        \label{eq:new-deviation-abstract-norm-N}
    \end{equation*}
\end{proposition}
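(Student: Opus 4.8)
The plan is to reduce the statement to a single Gaussian concentration estimate for a rescaled supremum of the linear process $\vu\mapsto\frac1n\vxi^T\design\vu$, with the rescaling chosen so that the resulting deviation term is exactly $\sqrt2$. Write $c=\sigma\sqrt{\log(1/\delta_0)/n}$, introduce the function $M(\vu)=\max\bigl(N(\vu),c\norm{\design\vu}_n\bigr)$ — which is again positive homogeneous, since $M(\vu)\ge N(\vu)>0$ for $\vu\ne\vzero$ — and set
\begin{equation*}
V=V(\vxi)\triangleq\sup_{\vu\in\R^p:\,M(\vu)\le1}\frac1n\vxi^T\design\vu .
\end{equation*}
Because $M$ is positive homogeneous, the event in the conclusion of the proposition is exactly $\{V\le4+\sqrt2\}$: for $\vu\ne\vzero$ one has $\frac1n\vxi^T\design\vu=M(\vu)\cdot\frac1n\vxi^T\design(\vu/M(\vu))$ with $M(\vu/M(\vu))=1$, while for $\vu=\vzero$ both sides vanish. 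Moreover $0\le V(\vxi)\le\norm{\vxi}_2/(c\sqrt n)<\infty$, since $M(\vu)\le1$ forces $\norm{\design\vu}_n\le1/c$. Thus it suffices to show $\PP(V\le4+\sqrt2)\ge1-\delta_0/2$.

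The first step bounds the median of $V$. If $M(\vu)\le1$ then $N(\vu)\le M(\vu)\le1$, so the supremum defining $V$ runs over a subset of $\{N(\vu)\le1\}$; hence $V\le4$ on the event $\Omega_4$. Since $\PP(\Omega_4)\ge1/2$ we get $\PP(V\le4)\ge1/2$, and therefore $V$ admits a median $m$ with $m\le4$.

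The second step shows that $V$ is Lipschitz in $\vxi$ with constant $1/(c\sqrt n)=1/(\sigma\sqrt{\log(1/\delta_0)})$ for the Euclidean norm. Indeed, for any $\vxi,\vxi'\in\R^n$ and any $\vu$ with $M(\vu)\le1$, the Cauchy--Schwarz inequality together with $c\norm{\design\vu}_n\le1$ give
\begin{equation*}
\frac1n(\vxi-\vxi')^T\design\vu
\le\frac1n\norm{\vxi-\vxi'}_2\,\norm{\design\vu}_2
=\frac{\norm{\vxi-\vxi'}_2}{\sqrt n}\,\norm{\design\vu}_n
\le\frac{\norm{\vxi-\vxi'}_2}{c\sqrt n} ,
\end{equation*}
and taking the supremum over such $\vu$ yields $V(\vxi)-V(\vxi')\le\norm{\vxi-\vxi'}_2/(c\sqrt n)$; by symmetry $V$ is $\bigl(1/(c\sqrt n)\bigr)$-Lipschitz. (Measurability is not an issue: $V$ is a supremum of continuous functions of $\vxi$.) Writing $\vxi=\sigma\vz$ with $\vz\sim\mathcal N(\vzero,I_{n\times n})$, the map $\vz\mapsto V(\sigma\vz)$ is $\ell$-Lipschitz with $\ell=\sigma/(c\sqrt n)=1/\sqrt{\log(1/\delta_0)}$, so the Gaussian concentration (isoperimetric) inequality around the median gives
\begin{equation*}
\PP\bigl(V\ge m+t\bigr)\le\tfrac12\exp\!\bigl(-t^2/(2\ell^2)\bigr)=\tfrac12\,\delta_0^{\,t^2/2},\qquad t\ge0 .
\end{equation*}
Taking $t=\sqrt2$ yields $\PP(V\ge m+\sqrt2)\le\delta_0/2$, and since $m\le4$ this gives $\PP(V\le4+\sqrt2)\ge1-\delta_0/2$, which is the claim. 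The argument is short; the only point requiring care is the bookkeeping of constants — one must use the isoperimetric bound with its factor $\tfrac12$ and the exact calibration $c=\sigma\sqrt{\log(1/\delta_0)/n}$, which is precisely what makes the deviation equal $\sqrt2$ and hence produces the constant $4+\sqrt2$.
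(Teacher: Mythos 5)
Your proof is correct and is essentially identical to the paper's: both rescale by introducing the set $\{\vu : \max(N(\vu),\,\sigma\sqrt{\log(1/\delta_0)/n}\,\norm{\design\vu}_n)\le 1\}$, bound the median of the supremum by $4$ using $\Pro(\Omega_4)\ge 1/2$ and the inclusion of this set in $\{N\le 1\}$, and apply Gaussian concentration around the median with the Lipschitz constant $1/\sqrt{\log(1/\delta_0)}$ to get the deviation $\sqrt 2$ with probability $\ge 1-\delta_0/2$. The constant bookkeeping matches the paper exactly.
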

\begin{proof}
    By homogeneity, it is enough to consider only $\vu\in\R^p$ such that
    $\max\left(N(\vu), \norm{\design\vu}_n / L \right) = 1$
    \text{where}
    $L \triangleq (n/ (\sigma^2\log(1/\delta_0)))^{1/2}$.
    Define $T\subset\R^p$ and $f:\R^n\rightarrow \R$ by
    \begin{equation}
        T \triangleq \left\{\vu\in\R^p: \max\left(N(\vu), \frac 1 L \norm{\design\vu}_n\right)   \le 1\right\},
        \qquad
        f(\vv) \triangleq
        \sup_{\vu\in T}
        \frac 1 n (\sigma \vv)^T\design\vu
        \label{eq:def-T-f}
    \end{equation}
    for all $\vv\in\R^n$.
  Then, $f$ is a Lipschitz function with Lipschitz constant $\sigma L/\sqrt n$.
    Thus, by \cite[Inequality (1.4)]{LT:91}, we have
    with probability at least $1-\delta_0/2$,
    \begin{align*}
        \sup_{\vu\in T}
        \frac 1 n \vxi^T\design\vu
        &\le
        \Med\left[
                \sup_{\vu\in T}
                \frac 1 n \vxi^T\design\vu
        \right]
        + \sigma L \sqrt{\frac{2\log(1/\delta_0)}{n}}
        \\
       &
       { \le
        \Med\left[
                \sup_{\vu\in\R^p: N(\vv) {\le} 1}
                \frac 1 n \vxi^T\design\vu
        \right]
        + \sigma L \sqrt{\frac{2\log(1/\delta_0)}{n}}
        }
        \\
        &\le
        4
        + \sigma L \sqrt{\frac{2\log(1/\delta_0)}{n}} = 4+\sqrt 2,
    \end{align*}
    where we used the fact that $\mathbb P(\Omega_4)\ge 1/2$ to bound from above the median.
\end{proof}

\begin{proof}[Proof of Theorem \ref{thm:main-event}] Set
    \begin{equation}\label{eq:nu}
        N(\vu) = \sum_{j=1}^p u_j^\sharp \sigma \sqrt{\frac{\log(2p/j)}{n}}.
    \end{equation}
    Then, using that $g_j = \frac{1}{\sqrt n} \vxi^T\design\ve_j$ for all $j=1,...,p$ we have
    \begin{equation}\label{eq:nuu}
                \sup_{\vu\in\R^p: N(\vu) \le 1} \frac 1 n \vxi^T\design\vu
                \le
                \sup_{\vu\in\R^p: N(\vu) \le 1}
                \sum_{j=1}^p u_j^\sharp \sigma \sqrt{\frac{\log(2p/j)}{n}} \frac{g_j^\sharp}{\sigma\sqrt{\log(2p/j)}}
                \le
                \max_{j=1,\dots,p} \frac{g_j^\sharp}{\sigma\sqrt{\log(2p/j)}}.
    \end{equation}
    By Proposition \ref{prop:event-slope}, we have $\mathbb P(\Omega_4) \ge 1/2$, where $\Omega_4$ is the event introduced in  Proposition~\ref{prop:deviation-abstract-norm-N}. {Therefore, Theorem \ref{thm:main-event} follows immediately from Proposition \ref{prop:deviation-abstract-norm-N}.}
\end{proof}
}


\section{Tools for lower bounds}

\begin{lemma}\label{lem:verzelen}
    For any integers $p\ge 2$, $n\ge 1$, $s\in [1,p/2]$, and any matrix $\design\in\R^{n\times p}$, there exists a subset $\Omega$ of the set $\big\{1, 0,-1\big\}^p$
    with the following properties:
    \begin{eqnarray*}
        &&\norma{\vomega}_0=s, \quad \text{and} \quad     \norm{\design\vomega}^2_n \le \bar\theta_{\max}^2(\design, 1) s, \quad \forall  \vomega\in \Omega,\\ \vspace{2mm}
        && \log (|\Omega|) \ge  \tilde c s\log\left(\frac{ep}{s}\right)
    \end{eqnarray*}
    where $\tilde c>0$ is an absolute constant
    , and
    \begin{eqnarray*}
        \norma{\vomega - \vomega'}_q \ge (s/4)^{1/q}, \quad \forall \ 1\le q\le\infty,
    \end{eqnarray*}
    for any two distinct elements $\vomega$ and $\vomega'$of $\Omega$.
\end{lemma}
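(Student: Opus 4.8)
The plan is to decouple a purely combinatorial part --- choosing a large, pairwise well-separated family of supports --- from a one-line averaging argument that, on each chosen support, selects a sign pattern making the $\design$-norm small; the coupling with $\design$ enters only through the second, easy, step.

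\textbf{Step 1 (packing of supports).} First I would produce a family $\mathcal S$ of $s$-element subsets of $\{1,\dots,p\}$ with
\begin{equation*}
    |S\triangle S'|\ge s/2\ \text{ for all distinct }S,S'\in\mathcal S,
    \qquad
    \log|\mathcal S|\ge \tilde c\, s\log(ep/s),
\end{equation*}
$\tilde c>0$ an absolute constant. This is the classical Varshamov--Gilbert (packing/covering) argument: a family that is maximal subject to pairwise symmetric differences $\ge s/2$ also covers all $s$-subsets by ``balls'' $\{S':|S\triangle S'|<s/2\}$, so $|\mathcal S|$ times the number of $s$-subsets within symmetric difference $<s/2$ of a fixed one is at least $\binom{p}{s}$; bounding this count by $C^{s}\binom{p}{\lceil s/4\rceil}$ (choose the $\le s/4$ elements removed and the $\le s/4$ elements added) and using $\binom{p}{s}/\binom{p}{\lceil s/4\rceil}\ge \exp(c'\, s\log(ep/s))$ gives the claim when $p/s$ is bounded away from $1$. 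Making the cardinality bound hold \emph{uniformly} over all $1\le s\le p/2$ is, I expect, the only real obstacle: when $s$ is of order $p$, $s\log(ep/s)$ is only of order $s$ and the crude ball count becomes lossy, so there one instead draws the supports independently and uniformly at random and controls each overlap $|S\cap S'|$ (a hypergeometric variable with mean $s^2/p\le s/2$) by a concentration inequality, keeping a subcollection with $\log|\mathcal S|\ge c'' s$. Everything below is bookkeeping.

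\textbf{Step 2 (signs).} For each $S\in\mathcal S$, choose $\vomega^S\in\{-1,0,1\}^p$, supported exactly on $S$ with nonzero entries in $\{-1,1\}$, minimizing $\norm{\design\vomega}_n^2$ over the $2^{s}$ such vectors. Averaging over the sign patterns cancels all cross terms:
\begin{equation*}
    2^{-s}\sum_{\text{signs}}\norm{\design\vomega}_n^2
    \;=\;\sum_{j\in S}\norm{\design\ve_j}_n^2
    \;\le\; s\,\bar\theta_{\max}^2(\design,1),
\end{equation*}
the last inequality being $\bar\theta_{\max}(\design,1)=\max_{j}\norm{\design\ve_j}_n$. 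As a minimum is at most an average, $\norm{\design\vomega^S}_n^2\le s\,\bar\theta_{\max}^2(\design,1)$; also $\norma{\vomega^S}_0=s$ by construction.

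\textbf{Step 3 (assembling $\Omega$ and separation).} Put $\Omega=\{\vomega^S:S\in\mathcal S\}$. Since $\vomega^S$ has support $S$, the map $S\mapsto\vomega^S$ is injective, so $\log|\Omega|=\log|\mathcal S|\ge \tilde c\, s\log(ep/s)$; the conditions $\norma{\vomega^S}_0=s$ and $\norm{\design\vomega^S}_n^2\le \bar\theta_{\max}^2(\design,1)\,s$ are exactly Step 2. For the separation, fix distinct $S,S'\in\mathcal S$ and write $\vomega^S=(\omega^S_1,\dots,\omega^S_p)$ and $\vomega^{S'}=(\omega^{S'}_1,\dots,\omega^{S'}_p)$. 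Since $|S|=|S'|=s$ we have $|S\setminus S'|=|S\triangle S'|/2\ge s/4$, and for every $j\in S\setminus S'$, $\omega^S_j\in\{-1,1\}$ while $\omega^{S'}_j=0$, so $|\omega^S_j-\omega^{S'}_j|=1$. Hence, for $1\le q<\infty$,
\begin{equation*}
    \norma{\vomega^S-\vomega^{S'}}_q^q\;\ge\;|S\setminus S'|\;\ge\; s/4 ,
\end{equation*}
i.e. $\norma{\vomega^S-\vomega^{S'}}_q\ge (s/4)^{1/q}$, while for $q=\infty$ the same coordinates give $\norma{\vomega^S-\vomega^{S'}}_\infty\ge1=(s/4)^{1/\infty}$ under the convention $x^{1/\infty}=1$. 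This yields all the required properties of $\Omega$.
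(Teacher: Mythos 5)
Your proof is correct and is essentially the standard argument that the paper itself omits (it defers to Verzelen, pp.~79--80): a sparse Varshamov--Gilbert packing of $s$-subsets, followed by averaging over the $2^s$ sign patterns on each support to kill the cross terms and get $\norm{\design\vomega}_n^2\le s\,\bar\theta_{\max}^2(\design,1)$, with the $\ell_q$-separation read off from $|S\setminus S'|\ge s/4$. The only part left as a sketch is the uniform-in-$s$ cardinality bound for the packing, which you correctly identify and for which your two complementary arguments (counting for small $s/p$, random supports with hypergeometric concentration for $s$ comparable to $p$) do combine to cover all $s\le p/2$.
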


The proof of this lemma is omitted since it closely follows the argument in \cite[p.79--80]{verzelen2012}.

\section{Random design matrices}

\begin{proof}[Proof of Theorem \ref{thm:design-random-matrices-slope}]
    In this proof, we denote by
    $C_i$ absolute positive constants.
    Using \eqref{C},    the second inequality in \eqref{eq:re_cov_mat}, and \eqref{eq:algebra_stirling} we get the inclusion
    $
    \cC_{WRE}(s,c_0)\cap
    \{\vv\in\R^p:\norma{\Sigma^{1/2}\vv}_2=1\}
    \subset T
    $ where
    \begin{equation}
        \label{T}
        T \triangleq \left\{\vv\in\R^p: \sum_{j=1}^pv_j^\sharp\sqrt{\log(2p/j)} \leq r ,\norma{\Sigma^{1/2}\vv}_2=1\right\}
        \ \mbox{ and } \
        r = \frac{1+c_0}{\kappa} \sqrt{s\log(2ep/s)}.
    \end{equation}
    It follows from \cite{MR3354613, bednorz, shahar_proc} (cf., for instance,
    Theorem~1.12 in \cite{shahar_proc}) that
    for all $u>0$,
    with probability at least $1-2\exp(-C_2\min(u^2, u\sqrt n) )$,
\begin{equation}
    \label{eq:proc_quad}
    \sup_{\vv\in T}
    \left|\frac{1}{n}\sum_{i=1}^n
        (\vv^T \vx_i)^2-\E\left[(\vv^T \vx)^2\right]
    \right|
    \leq C_1\left(
        \frac{ L \gamma}{\sqrt{n}}
        + \frac{\gamma^2}{n}
        + \frac{u L^2}{\sqrt n}
    \right),
\end{equation}
where
$\gamma = \E [\sup_{\vv\in T} G_\vv ]$
and $(G_\vv)_{\vv\in T}$ is a centered Gaussian process indexed by $T$ with covariance structure given by
    $\E[G_\vv G_\vu] = \vv^T \Sigma \vu$ for all $\vu,\vv\in T$. For instance, one can take $G_\vv = \vv^\top \Sigma^{1/2} \vz$ for $\vv\in T$,  where $\vz\sim \cN(\vzero, I_{p\times p})$. Then, $\gamma = \E\sup_{\vv\in T} \vv^\top \Sigma^{1/2} \vz$.

By  \eqref{eq:proc_quad}, if we take $u = \sqrt n / (4C_1 L^2)$
and if the number of observations $n$ satisfies
$n\geq 64{(C_1\vee C_1^2)} L^2 \gamma^2$,
then  with probability at least $1-2\exp(-C_3 n / L^4)$,
\begin{equation*}
    \frac{1}{2}\leq \frac{1}{n}\sum_{i=1}^n (\vv^T \vx_i)^2\leq \frac{3}{2},
    \qquad
    \forall \vv\in T.
\end{equation*}
Next, we  evaluate the Gaussian mean width $\gamma = \E\sup_{\vv\in T} \vv^\top \Sigma^{1/2} \vz$.

\begin{lemma}\label{lem:gauss_mean_width}
    Let $T$ be as in \eqref{T}, with arbitrary $r>0$.
    Let $\vz\sim\mathcal N(\vzero, I_{p\times p})$
    and let $\Sigma\in \R^{p\times p}$ be a positive semi-definite matrix
    with $\max_{j=1,\dots,p}\Sigma_{jj} \le 1$. Then,
    \begin{equation*}
    \E\sup_{\vv\in T} \vv^\top \Sigma^{1/2} \vz \leq 4 r + \sqrt{\pi/2}.
    \end{equation*}
\end{lemma}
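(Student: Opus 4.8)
The plan is to reduce the Gaussian mean width of $T$ to a maximum of $p$ Gaussian coordinates via the rearrangement inequality, and then to control that maximum using Proposition~\ref{prop:event-slope} together with Gaussian concentration of Lipschitz functions. Concretely, set $f(\vz)\triangleq\sup_{\vv\in T}\vv^\top\Sigma^{1/2}\vz$. First I would check that $f$ is $1$-Lipschitz: for $\vz,\vz'\in\R^p$,
\[
    f(\vz)-f(\vz')
    \le
    \sup_{\vv\in T}\vv^\top\Sigma^{1/2}(\vz-\vz')
    =
    \sup_{\vv\in T}(\Sigma^{1/2}\vv)^\top(\vz-\vz')
    \le
    \sup_{\vv\in T}\norma{\Sigma^{1/2}\vv}_2\,\norma{\vz-\vz'}_2
    =\norma{\vz-\vz'}_2,
\]
since $\Sigma^{1/2}$ is symmetric and $\norma{\Sigma^{1/2}\vv}_2=1$ on $T$; the reversed bound follows by exchanging $\vz$ and $\vz'$. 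Hence, by Lemma~\ref{lemma:median-mean}, $\E[f(\vz)]\le\Med[f(\vz)]+\sqrt{\pi/2}$, so it suffices to show that $f(\vz)$ has a median bounded by $4r$.

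Next I would establish a pointwise bound on $f(\vz)$. The coordinates $g_1,\dots,g_p$ of $\Sigma^{1/2}\vz\sim\cN(\vzero,\Sigma)$ are zero-mean Gaussian with variance $\Sigma_{jj}\le 1$. For $\vv\in T$, the rearrangement inequality gives $\vv^\top\Sigma^{1/2}\vz=\sum_{j=1}^p v_j g_j\le\sum_{j=1}^p v_j^\sharp g_j^\sharp$, where $(v_j^\sharp)$ and $(g_j^\sharp)$ are the non-increasing rearrangements of $(|v_j|)$ and $(|g_j|)$. Since $\log(2p/j)\ge\log 2>0$ for $j=1,\dots,p$, I can factor out the largest weighted coordinate of $\Sigma^{1/2}\vz$ and use the defining constraint of $T$:
\[
    \vv^\top\Sigma^{1/2}\vz
    \le
    \Big(\max_{j=1,\dots,p}\frac{g_j^\sharp}{\sqrt{\log(2p/j)}}\Big)\sum_{j=1}^p v_j^\sharp\sqrt{\log(2p/j)}
    \le
    r\,\max_{j=1,\dots,p}\frac{g_j^\sharp}{\sqrt{\log(2p/j)}}.
\]
Taking the supremum over $\vv\in T$ yields $f(\vz)\le r\,\max_{j}g_j^\sharp/\sqrt{\log(2p/j)}$ for every $\vz$.

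Finally, Proposition~\ref{prop:event-slope} applied to $g_1,\dots,g_p$ with $\sigma=1$ gives $\Pro\big(\max_j g_j^\sharp/\sqrt{\log(2p/j)}\le 4\big)\ge 1/2$, hence $\Pro(f(\vz)\le 4r)\ge 1/2$; thus $f(\vz)$ admits a median at most $4r$, and the first paragraph yields $\E[f(\vz)]\le 4r+\sqrt{\pi/2}$. There is no serious obstacle: the two ingredients (Proposition~\ref{prop:event-slope} and Lemma~\ref{lemma:median-mean}) are already available, and the only points requiring care are the verification that $f$ is $1$-Lipschitz — which rests on $\norma{\Sigma^{1/2}\vv}_2=1$ on $T$ and therefore holds even when $\Sigma$ is singular — and the passage, via the rearrangement inequality and the defining inequality of $T$, from the supremum to the weighted maximum $\max_j g_j^\sharp/\sqrt{\log(2p/j)}$.
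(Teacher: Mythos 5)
Your proof is correct and follows essentially the same route as the paper's: a rearrangement-inequality bound reducing $\sup_{\vv\in T}\vv^\top\Sigma^{1/2}\vz$ to $r\max_j g_j^\sharp/\sqrt{\log(2p/j)}$, Proposition~\ref{prop:event-slope} to bound the median by $4r$, and the $1$-Lipschitz property together with Lemma~\ref{lemma:median-mean} to pass from the median to the expectation. The only difference is cosmetic: you state the bound $f(\vz)\le r\max_j g_j^\sharp/\sqrt{\log(2p/j)}$ pointwise before invoking the high-probability event, whereas the paper argues directly on the event $\Omega_0$.
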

\begin{proof}
   In this proof, we set $g_j = \vz^T \Sigma^{1/2} \ve_j$, $j=1,\dots,p$.
    As $\Sigma_{jj}\le 1$, the variance of $g_j$ is at most 1.
    By Proposition \ref{prop:event-slope}, the event
    \begin{equation*}
        \Omega_0 \triangleq
        \left\{
                \max_{j=1,\dots,p} \frac{g_j^\sharp}{\sqrt{\log(2p/j)}} \le  4
        \right\}
    \end{equation*}
    has  probability at least $1/2$. On the event $\Omega_0$,
    for all $\vv\in T$ we have
    \begin{equation*}
        \vv^T\Sigma^{1/2}\vz
        =
        \sum_{j=1}^p g_j v_j
        \le
        \sum_{j=1}^p g_j^\sharp v_j^\sharp
        \le
        4\sum_{j=1}^p v_j^\sharp \sqrt{\log(2p/j)}   \le 4r.
    \end{equation*}
    Set $f (\vz) \triangleq \sup_{\vv\in T} (\vv^T\Sigma^{1/2}\vz)$.
    We have $f(\vz) \le 4r$ with probability at least $1/2$
    and thus $\Med[f(\vz) ] \le 4r$.
    Furthermore, since the constraint $\norma{\Sigma^{1/2}\vv}_2 = 1 $ is satisfied for any $\vv\in T$,
    the function $f(\cdot)$ is 1-Lipschitz. Therefore,  {by Lemma \ref{lemma:median-mean}},
    $|\Med[f(\vz)] - \E[f(\vz)]| \le \sqrt{\pi/2}$.
    \end{proof}

It follows from Lemma~\ref{lem:gauss_mean_width}  and \eqref{eq:proc_quad} that
if $n\geq C_4 L^2 r^2$ then
with probability at least $1-2\exp(-C_5 n/L^4)$,
we have $(1/2)\leq \norm{\design \vv}_n^2\leq 3/2$
for any $\vv\in T$.
By rescaling, for any
$\vv\in \cC_{WRE}(s,c_0)$ we obtain
$
\norm{\design \vv}_n^2\geq (1/2)\norma{\Sigma^{1/2}\vv}_2^2\geq
(\kappa^2/2)\norma{\vv}_2^2
$.
This proves that the second inequality in \eqref{eq:main_sub_gauss_mat} is satisfied if $n\geq C_4 L^2 r^2$.

We now prove the first inequality in \eqref{eq:main_sub_gauss_mat}.
Let $j\in\{1,\ldots,p\}$ and note that $\design \ve_j$ is a vector in $\R^n$
with i.i.d. subgaussian coordinates since
for all $i=1,\dots,n$ and all $t\ge 0$,
$\E \exp  (t\vx_i^T\ve_j) \le \exp(  t^2 L^2 \Sigma_{jj} /2 )$.
Hence, $\norm{\design \ve_j}_n^2 - \Sigma_{jj}$ is a sum of  independent
zero-mean sub-exponential variables.
It follows from Bernstein's inequality that for $u = 1/L^2$,
with probability at least $1-\exp(-C_6n u^2)$,
\begin{equation*}
    \norm{\design \ve_j}_n^2\leq \Sigma_{jj} +  u L^2 \Sigma_{jj}\le 1
\end{equation*}when $\Sigma_{jj}\leq 1/2$.
By the union bound, the condition $\max_{j=1,\dots,p}\norm{\design\ve_j}_n\le 1$ holds
with probability at least $1 - p e^{-C_6n/L^4}\geq 1 - e^{-C_6n/(2L^4)}$ if $n\ge (2L^4/C_6)\log p$.

In conclusion, both inequalities in \eqref{eq:main_sub_gauss_mat} are satisfied if
\begin{equation*}
    n\geq \frac{C_9 (1+c_0)^2L^2}{\kappa^2} s \log\Big(\frac{2ep}{s}\Big) + \frac{2L^4\log p}{C_6}.
\end{equation*}
Since $\kappa^2\le 1$,  $L\ge1$, and $s \log(2ep/s)\ge \log p$
the inequality in the last display is satisfied if \eqref{eq:assum-n-p-s-slope-random-design} holds for some large enough absolute constant $C>0$.
\end{proof}


\section{Subgaussian noise}
\label{s:proof-main-event-subgaussian}

To prove Proposition \ref{prop:sub_gauss_conc},
we need the following lemma.
\begin{lemma}
    \label{lemma:subgaussian-K=4}
    Let $\sigma>0$, $z\sim\mathcal N(0,1)$, and let $\xi_i$ be a random variable
    satisfying \eqref{eq:assum-subgaussian}.
    Then
    \begin{equation*}
        \mathbb P(|\xi_i|>t) \le 4 \; \mathbb P(\sigma |z| > t),
        \qquad
        \forall \ t\ge 0.
    \end{equation*}
\end{lemma}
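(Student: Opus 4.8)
The plan is to treat small and large values of $t$ separately, the cutoff being $t=c\sigma$ for a suitable absolute constant $c$ lying in the window $(1.05,1.15)$, say $c=1.1$.

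For $t\le c\sigma$ the inequality is essentially trivial. Since $x\mapsto\mathbb P(|z|>x)$ is nonincreasing and $c$ is chosen so that $\mathbb P(|z|>c)\ge 1/4$ (which holds for $c=1.1$, as $\mathbb P(|z|>1.1)>0.27$), we get $4\,\mathbb P(\sigma|z|>t)=4\,\mathbb P(|z|>t/\sigma)\ge 4\,\mathbb P(|z|>c)\ge 1\ge\mathbb P(|\xi_i|>t)$.

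For $t> c\sigma$ I would combine a Chernoff bound with a classical Gaussian tail bound. On one side, by Markov's inequality and \eqref{eq:assum-subgaussian},
\[
\mathbb P(|\xi_i|>t)=\mathbb P\bigl(e^{\xi_i^2/\sigma^2}>e^{t^2/\sigma^2}\bigr)\le e^{-t^2/\sigma^2}\,\E\bigl[e^{\xi_i^2/\sigma^2}\bigr]\le e^{\,1-t^2/\sigma^2}.
\]
On the other side, writing $Z\sim\mathcal N(0,1)$ and $x=t/\sigma$, the Mills ratio bound $\mathbb P(Z>x)\ge\frac{1}{\sqrt{2\pi}}\,\frac{x}{x^2+1}\,e^{-x^2/2}$ (valid for all $x>0$) gives $4\,\mathbb P(\sigma|z|>t)=8\,\mathbb P(Z>x)\ge\frac{8}{\sqrt{2\pi}}\,\frac{x}{x^2+1}\,e^{-x^2/2}$. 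Thus the claim for $t>c\sigma$ reduces to the scalar inequality $e^{\,1-x^2/2}\le\frac{8x}{\sqrt{2\pi}(x^2+1)}$ for all $x\ge c$.

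To close the argument I would put $g(x)=\frac{\sqrt{2\pi}(x^2+1)}{8x}\,e^{\,1-x^2/2}$ and show $g(x)\le 1$ on $[c,\infty)$. A one-line computation gives $(\log g)'(x)=\frac{2x}{x^2+1}-\frac1x-x\le 1-2=-1$ for $x\ge 1$ (using $2x\le x^2+1$ and $\tfrac1x+x\ge 2$), so $g$ is decreasing on $[1,\infty)$, and it remains only to check $g(c)\le 1$, a numerical verification ($g(1.1)<0.94$). The one genuinely delicate point of the proof is the choice of the cutoff constant $c$: it must be large enough that the Chernoff/Mills comparison in the second regime survives (forcing $g(c)\le 1$, which needs $c$ not much below $1.1$) yet small enough that the Gaussian tail still exceeds $1/4$ (forcing $c\le 1.151$); any $c$ in between, e.g. $c=1.1$, does the job.
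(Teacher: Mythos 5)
Your proof is correct, and its two ingredients are the same as in the paper's: a Chernoff bound giving $\mathbb P(|\xi_i|>t)\le e^{1-t^2/\sigma^2}$, and a lower bound on the Gaussian tail. The difference is in how the Gaussian side is handled. The paper invokes the bound $\mathbb P(|z|>t)\ge \frac{4e^{-t^2/2}}{\sqrt{2\pi}\,(t+\sqrt{4+t^2})}$ (Abramowitz--Stegun 7.1.13), which is non-degenerate at $t=0$ and can be checked to dominate $e^{1-t^2}/4$ uniformly in $t\ge0$, so no case split is needed. You instead use the Mills-ratio bound $\mathbb P(Z>x)\ge\frac{1}{\sqrt{2\pi}}\frac{x}{x^2+1}e^{-x^2/2}$, which vanishes at $x=0$, forcing the two-regime decomposition at $t=c\sigma$ with the trivial bound $\mathbb P(|\xi_i|>t)\le 1$ on the small-$t$ side; your analysis of the admissible window for $c$ (roughly $(1.05,\,1.15)$, pinched between $g(c)\le1$ and $\mathbb P(|z|>c)\ge1/4$) is accurate, and the monotonicity argument $(\log g)'\le-1$ on $[1,\infty)$ is clean. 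The paper's version buys a one-line uniform statement at the cost of citing a slightly less standard tail inequality; yours is longer but self-contained modulo the classical Mills ratio. Both are complete proofs.
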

\begin{proof}
    By homogeneity, it is enough to consider $\sigma = 1$.
    From a standard lower bound on the Gaussian tail probability, cf.
    \cite[Formula 7.1.13]{AbrStegun}, we get
    \begin{equation*}
        \mathbb P(|z|>t) \ge \frac {4\exp(-t^2/2)}{\sqrt{2\pi} (t + \sqrt{4+t^2})}\ge
        \frac{e^{1-t^2}}{4}, \quad \ \forall t\ge 0,
    \end{equation*}
    while if $\xi_i$ satisfies \eqref{eq:assum-subgaussian} with $\sigma = 1$, a
    Chernoff bound yields that $\mathbb P(|\xi_i|>t) \le \exp(1-t^2)$.
\end{proof}

\begin{proof}[Proof of Proposition \ref{prop:sub_gauss_conc}]
    Let $\eta>0$. Let
    $(\eps_1,\dots,\eps_n)$ be a vector of i.i.d. Rademacher variables independent of $\vxi$. The  symmetrization inequality, cf., e.g. \cite[Theorem 2.1]{koltch}, yields 
    \begin{equation*}
        \E \ {\exp} \left(\eta \sup_{\vu\in U} \sum_{i=1}^n \xi_i u_i\right)
        \leq
        \E \ {\exp} \left(\eta \sup_{\vu\in U} \sum_{i=1}^n 2 \eps_i \xi_i u_i\right)
        .
    \end{equation*}
    By Lemma \ref{lemma:subgaussian-K=4},
    we have
    $
        \mathbb P(|{\eps_i}\xi_i|>t) \le K \; \mathbb P(\sigma |z_i| > t)
    $
    for $K=4$
    and for $i=1,\dots,n$.
    It follows from the contraction inequality
    as stated in \cite[Lemma~4.6]{LT:91} that
    \begin{equation*}
        \E \exp\left(\eta \sup_{\vu\in U} \sum_{i=1}^n 2 \eps_i\xi_i u_i\right)
        \leq
        \E \exp\left(2 \eta K \sigma \sup_{\vu\in U} \vz^T\vu \right)
        .
    \end{equation*}
    Since $U$ is a subset of the unit sphere, the function
    $f:\vz\rightarrow \sup_{\vu\in U} \vz^T\vu$
    is $1$-Lipschitz. Thus,
    by \cite[Theorem 5.5]{boucheron2013concentration},
    the right hand side of the previous display
    is bounded from above by
    \begin{equation*}
        \exp\left( 2\eta K \sigma \E \left[ \sup_{\vu\in U}\vz^T\vu \right]  + 2\eta^2K^2 \sigma^2 \right).
    \end{equation*}
    Furthermore, {by Lemma \ref{lemma:median-mean},}   $|\Med[f(\vz)] - \E f(\vz)]| \le \sqrt{\pi/2}$.
    A Chernoff argument completes the proof.
\end{proof}
\vspace{-6mm}
{
\begin{proof}[Proof of Theorem \ref{thm:main-event-subgaussian}]
    Let $N(\cdot)$ be defined in \eqref{eq:nu} and let $\vz$ be a standard normal $\mathcal N(\vzero,I_{n\times n})$ random vector. {It follows from \eqref{eq:nuu} and Proposition \ref{prop:event-slope} that
    \begin{equation}\label{eq:last}
    \Med\left[
            \sup_{\vu\in \R^p: N(\vu)\le1}
            \frac{1}{n}(\sigma\vz)^T\design\vu
        \right] \le 4.
    \end{equation}
    }
    Let $T\subset\R^p$
    be defined in \eqref{eq:def-T-f} with $L = \sqrt n /(\sigma(\sqrt{\pi/2} + \sqrt{2\log(1/\delta_0)}))$.
        Using Proposition \ref{prop:sub_gauss_conc} {and then \eqref{eq:last}} we obtain that, with probability at least $1-\delta_0$,
    \begin{align*}
        \sup_{\vu\in T}
        \frac{1}{n}\vxi^T\design\vu
        &\le
        8 \sigma \Med\left[
            \sup_{\vu\in T}
            \frac{1}{n}\vz^T\design\vu
        \right]
        + \frac{8 \sigma L}{\sqrt n} \left( \sqrt{\pi/2} + \sqrt{2\log(1/\delta_0)}\right)
        \\
        &\le
        32
        + \frac{8 \sigma L}{\sqrt n} ( \sqrt{\pi/2} + \sqrt{2\log(1/\delta_0)})
        = 40.
    \end{align*}
\end{proof}
}

\section{Lasso with universal tuning parameter}
\label{appendix-previous-improved}

\begin{proof}[Proof of \Cref{prop:previous-improved}]
    Let $\vbeta\in\R^p$ be a minimizer of the right hand side of \eqref{eq:oi-previous-improved}
    and let $T$ be the support of  $\vbeta$, so that $|T|\le s$.
    Using inequality
    \eqref{eq:almost-sure}  with $h(\cdot) = 2 \lambda \norma{\cdot}_1$ we get that, almost surely,
    \begin{equation}
        \norm{\design\hbeta - \vf}^2_n
        -
        \norm{\design\vbeta - \vf}^2_n
        \le
        (2/n) \vxi^T\design(\hbeta - \vbeta)
        + 2 \lambda\norma{\vbeta}_1
        - 2 \lambda\norma{\hbeta}_1
        - \norm{\design(\hbeta-\vbeta)}_n^2.
        \label{eq:extra-appendix-start}
    \end{equation}
    Let $\vu= \hbeta - \vbeta$ and define the function $f$ as follows:
    \begin{equation*}
        f(\vx) = \sup_{\vv\in\R^p: \norm{\design\vv}_n = 1}
        \left(
            (1/\sqrt n) \vx^T\design\vv
            + \sqrt n\lambda
            (
                \norma{\vv_T}_1 - \norma{\vv_{T^c}}_1
            )
        \right),
        \qquad
        \vx\in\R^n.
    \end{equation*}
    By simple algebra,
    $2 \lambda\norma{\vbeta}_1
    - 2 \lambda\norma{\hbeta}_1 \le 2\lambda( \norma{\vu_T}_1 - \norma{\vu_{T^c}}_1 ) = \norm{\design\vu}_n 2\lambda( \norma{\vw_T}_1 - \norma{\vw_{T^c}}_1 )$ where $\vw=(1/\norm{\design\vu}_n) \vu$.
    Hence the right hand side of \eqref{eq:extra-appendix-start}
    is bounded from above by
    \begin{equation*}
        2 \norm{\design\vu}_n f(\vxi) /\sqrt n - \norm{\design\vu}_n^2 \le f^2(\vxi) / n.
    \end{equation*}
    Since the function $f$ is 1-Lipschitz, by the Gaussian concentration bound \cite[inequality (1.4)]{LT:91} we have, for all $\delta\in (0,1)$,
    $$\mathbb P\left(f(\vxi)\le \Med[f(\vxi)] + \sigma \sqrt{2\log(1/\delta)}\right) \ge 1-\delta.$$
    To complete the proof, it remains to show that
    \begin{equation}
    \Med[f(\vxi)]  \le
         \sigma \left(\frac{1+\eps}{\kappa(s,c_0)} \sqrt{2s\log p} + \sqrt s + 2.8\right).
        \label{median-f}
    \end{equation}
    Let  $\Pi_T\in\R^{n\times n}$ be the orthogonal projection onto the linear span of $\{\vx_j, j\in T\}$, where $\vx_j=\design e_j$.
    Then almost surely,
    \begin{align*}
        f(\vxi) &=
       \sup_{\vv\in\R^p: \norm{\design\vv}_n = 1} \left[
            (1/\sqrt n) \vxi^T\Pi_T \design\vv
            +
            (1/\sqrt n) \vxi^T(I_{n\times n}-\Pi_T)\design\vv_{T^c}
            + \sqrt n\lambda
            (
                \norma{\vv_T}_1 - \norma{\vv_{T^c}}_1
            )
            \right]
            \\
            &\le
            \norma{\Pi_T \vxi}_2
                   +
     \sup_{\vv\in\R^p: \norm{\design\vv}_n = 1}     \left[
            (1/\sqrt n) \vxi^T(I_{n\times n}-\Pi_T)\design\vv_{T^c}
            + \sqrt n\lambda
            (
                \norma{\vv_T}_1 - \norma{\vv_{T^c}}_1
            )\right].
    \end{align*}
    The random variable $\norma{\Pi_T \vxi}_2^2/\sigma^2$ has a chi-square distribution with at most $s$ degrees of freedom.
    Standard bounds on the tails of the chi-square distribution yield that
    the event $\Omega_1 =
    \{ \norma{\Pi_T \vxi}_2 \le \sigma(\sqrt s +  \sqrt{2\log(50)}) \}$ satisfies $\mathbb P(\Omega_1) \ge 1 - 1/50 = 0.98$.
    Let $Z = \max_{j=1,...,p} |\vxi^T(I_{n\times n}-\Pi_T)\vx_j |$ and
    define $\Omega_2 = \{ Z \le \sigma\sqrt{2\log p} \}$.
    The random variable $Z$ is the maximum of $2p$ centered Gaussian random variables with variance at most $\sigma^2$ and thus
    $\mathbb P(Z> \sigma x) \le 2p e^{-x^2/2} /(x\sqrt{2\pi})$ for all $x>0$. The choice $x=\sqrt{2\log p}$ yields that
    $\mathbb P(\Omega_2)\ge 1- 1/\sqrt{\pi \log p}\ge0.5208$ for all $p\ge 4$. Direct calculation shows that the same bound on $\mathbb P(\Omega_2)$ is true for $p\in \{2,3\}$. Combining these remarks we find that, for all $p\ge 2$,
    \begin{equation*}
        \mathbb P(\Omega_1\cap\Omega_2)
        \ge
        1- \mathbb P(\Omega_1^c) - \mathbb P(\Omega_2^c)
        \ge
        1- 0.02 - 0.4792
        > 1/2.
    \end{equation*}
    Thus, by definition of the median, an upper bound on $\Med[f(\vxi)]$ is given by an upper bound on $f(\vxi)$ on the event $\Omega_1\cap\Omega_2$:
    \begin{align}\label{I.3}
        \Med[f(\vxi)] &\le \sigma(\sqrt s +  2.8) +
       \sup_{\vv\in\R^p: \norm{\design\vv}_n = 1} \left[
            \sigma\sqrt{2\log p} \ | \vv_{T^c}|_1
            + \sqrt n\lambda
            (
                \norma{\vv_T}_1 - \norma{\vv_{T^c}}_1
            )
            \right]
                \end{align}
where we have used that $\sqrt{2\log(50)}\le 2.8$. Recall that $\sqrt n\lambda= (1+\varepsilon)  \sigma\sqrt{2\log p} $. Thus, if $\norma{\vv_{T^c}}_1 > (1+1/\varepsilon) \norma{\vv_T}_1$, the supremum in \eqref{I.3} is negative and \eqref{median-f} follows.
    Finally, if $\norma{\vv_{T^c}}_1 \le (1+1/\varepsilon) \norma{\vv_T}_1$ then, by the definition of the $RE$ constant $\kappa(s,c_0)$, with $c_0=1+1/\varepsilon$ we obtain
    $ \norma{\vv_T}_1 \le \sqrt{s} \norma{\vv_T}_2\le   \sqrt{s} \norm{\design\vv}_n /\kappa(s,c_0)$.
 Using this remark to bound the supremum in \eqref{I.3} proves \eqref{median-f}.
    \end{proof}

\noindent {\bf Acknowledgement.} This work was supported by GENES and by the French National Research Agency (ANR) under the grants
IPANEMA (ANR-13-BSH1-0004-02) and Labex Ecodec (ANR-11-LABEX-0047). It was also supported by the "Chaire Economie et Gestion des Nouvelles Donn\'ees", under the auspices of Institut Louis Bachelier, Havas-Media and Paris-Dauphine.

\begin{footnotesize}
\bibliographystyle{plain}
\bibliography{biblio}
\end{footnotesize}

\end{document}